\newtheorem{theorem}{Theorem}[section]
\newtheorem{lemma}[theorem]{Lemma}
\newtheorem{proposition}[theorem]{Proposition}
\newtheorem{corollary}[theorem]{Corollary}
\newtheorem{exAux}[theorem]{Example}
\newenvironment{example}{\begin{exAux} \rm}{\end{exAux}}
\newtheorem{Def}[theorem]{Definition}
\newenvironment{definition}{\begin{Def} \rm}{\end{Def}}
\newtheorem{Note}[theorem]{Note}
\newenvironment{note}{\begin{Note} \rm}{\end{Note}}
\newtheorem{Problem}[theorem]{Problem}
\newtheorem{Rem}[theorem]{Remark}
\newtheorem{Not}[theorem]{Notation}
\newtheorem{Conj}[theorem]{Conjecture}
\newtheorem{Ass}[theorem]{Assumption}
\newenvironment{proof}{\medskip\noindent{\bf Proof.\ }}{\qed\medskip}
\newcommand{\qed}{\hfill\mbox{$\Box$\qquad\qquad}}
\newcommand{\F}{\mathbb{F}}
\newcommand{\sltwo}{\mathfrak{sl}_2}
\newcommand{\ad}{\text{\rm ad}\hspace{0.05em}}
\renewcommand{\b}[1]{\langle #1 \rangle}
\newcommand{\bBig}[1]{\Bigl\langle #1 \Bigr\rangle}
\newcommand{\tr}{\text{\rm tr}}
\newcommand{\vphi}{\varphi}
\renewcommand{\th}{\theta}
\newcommand{\twoFone}[4]{\,_2F_1\left(%
\begin{array}{c}#1\;\;#2\\#3\end{array}%
\left|\;#4 \rule{0mm}{3ex}\right.\right)}
\renewcommand{\indent}{\hspace{6mm}}
\begin{document}

\title{Krawtchouk polynomials, the Lie algebra $\mathfrak{sl}_2$,
\\
and Leonard pairs}

\author{Kazumasa Nomura and Paul Terwilliger}

\maketitle

\thispagestyle{empty}

\smallskip

\begin{quote}
\small 
\begin{center}
\bf Abstract
\end{center}
\indent
A Leonard pair is a pair of diagonalizable linear 
transformations of a finite-dimen\-sional vector space,
each of which acts in an irreducible tridiagonal fashion on an 
eigenbasis for the other one.
In the present paper we give an elementary but comprehensive
account of how the following are related:
(i) Krawtchouk polynomials;
(ii) finite-dimensional irreducible modules for the Lie algebra $\sltwo$;
(iii) a class of Leonard pairs said to have Krawtchouk type.
Along the way we obtain elementary proofs of some well-known facts about
Krawtchouk polynomials,
such as the three-term recurrence, the orthogonality,
the difference equation, and the generating function.
The paper is a tutorial meant for a graduate student or a researcher
unfamiliar with the above topics.
\end{quote}

\section{Introduction}

\indent
This paper is about the relationship between the Krawtchouk polynomials,
the Lie algebra $\sltwo$, and a class of Leonard pairs said to have 
Krawtchouk type.
Before going into detail, we take a moment to establish some notation.
Throughout the paper $\F$ denotes a field.
From now until the end of Section \ref{sec:matrices}
we assume the characteristic $\text{Char}(\F) \neq 2$.
Let $N$ denote an integer. 
We now define what it means for $N$ to be {\em feasible}. 
For the case $\text{Char}(\F)=0$, $N$ is feasible whenever $N \geq 0$. 
For the case $\text{Char}(\F)>0$,
$N$ is feasible whenever $0 \leq N < \text{Char}(\F)$.
Let $x$ denote an indeterminate and let $\F[x]$ denote
the $\F$-algebra consisting of the polynomials in $x$ that
have all coefficients in $\F$.
We now define some polynomials in $\F[x]$ called
Krawtchouk polynomials \cite[page 347]{AAR}, \cite[Section 9.11]{KLS}.
Recall the shifted factorial
\begin{align*}
 (\alpha)_n &= \alpha(\alpha+1)\cdots(\alpha+n-1),
    & &  n=0,1,2,\ldots
\end{align*}
We interpret $(\alpha)_0=1$.
By \cite[Section 2.1]{AAR}
the $_2F_1$ hypergeometric series is
\[
 \twoFone{a}{b}{c}{z}
  = \sum_{n=0}^\infty 
     \frac{(a)_n (b)_n}{(c)_n}
     \frac{z^n}{n!}.
\]
The Krawtchouk polynomials are defined using two parameters
denoted $N$ and $p$.
The parameter $N$ is a feasible integer
and the parameter $p$ is a scalar in $\F$ such that 
$p\not=0$ and $p\not=1$. 
For $i=0,1,\ldots,N$  define a polynomial $K_i \in \F[x]$ by
\begin{align}    \label{eq:Kn}
 K_i &= K_i(x;p,N) = 
   \twoFone{-i}{-x}{-N}{\displaystyle \frac{1}{p}}.
\end{align}
We check that $K_i$ is  a well-defined polynomial in $\F[x]$.
Observe that $(-i)_n$ vanishes for $n>i$, 
so in the hypergeometric series \eqref{eq:Kn} the $n$-summand is zero
for $n > i$.
Also observe that $(-N)_n$ is nonzero for $n=0,1,\ldots,i$. 
Therefore the $n$-summand in \eqref{eq:Kn} has nonzero denominator for
$n=0,1,\ldots,i$.
By these comments $K_i$ is a well-defined polynomial in $\F[x]$.
One checks that this polynomial has degree $i$,
and the coefficient of $x^i$ is $\frac{1}{(-N)_i p^i}$.
The polynomial $K_i$ is the $i$th Krawtchouk polynomial
with parameters $N$ and $p$.
By the construction
\begin{align}     \label{eq:AW}
 K_i(j) &= K_j(i),
 & &  i,j=0,1,\ldots,N.
\end{align}
This is an example of a phenomenon known as 
self-duality \cite{Ea} or more generally
Askey-Wilson duality
\cite[Theorem 5.1]{BI}, 
\cite[Theorem 4.1]{T:survey}.

We now recall the notion of a Leonard pair.
A Leonard pair is a pair of diagonalizable linear transformations
of a finite-dimensional vector space, 
each of which acts in an irreducible tridiagonal fashion on an 
eigenbasis for the other one \cite[Definition 1.1]{T:Leonard}.
For instance, for all feasible integers $N$ the pair of matrices
\begin{align*}
  S&= \begin{pmatrix}
       0 & 1 & & & & \text{\bf 0} \\
       N & 0 & 2 \\
         & \cdot & \cdot & \cdot \\
         &   & \cdot & \cdot & \cdot \\
         &   &       & 2     & 0 & N \\
       \text{\bf 0} & & &    & 1 & 0
     \end{pmatrix},
 & D&=\text{diag}(N,N-2,\ldots,-N)
\end{align*}
acts on the vector space $\F^{N+1}$ as a Leonard pair.
To see this see \cite[Section 1]{T:intro}
or Lemma \ref{lem:147} below.
This Leonard pair falls into a family said to have Krawtchouk type
\cite[Example 10.12]{T:survey}.
See Definition \ref{def:177} below for the definition of Krawtchouk type.
See \cite[Example 1.5]{ITT}, \cite[Example 1.3]{TV} for more examples
of Leonard pairs that have Krawtchouk type.

In the present paper
we give an elementary but comprehensive account of how the following
are related:
(i) Krawtchouk polynomials;
(ii) finite-dimensional irreducible $\sltwo$-modules;
(iii) Leonard pairs of Krawtchouk type.
The paper is a tutorial meant for a graduate student or
a researcher unfamiliar with the above topics.
In this regard the paper is similar to a paper of Junie Go \cite{Go}
which provides an introduction to the subconstituent algebra 
\cite{T:subconst1} using the hypercube as a concrete example.

Before summarizing the present paper we briefly review the history
concerning how the Krawtchouk polynomials are related to $\sltwo$.
A relationship between the Krawtchouk polynomials and $\sltwo$
was first given by Miller \cite{Mil};
he observed that the difference equations for Krawtchouk polynomials 
come from the irreducible representations of $\sltwo$.
Koornwinder \cite[Section 2]{Koo} observed that the matrix elements
of a finite-dimensional irreducible representation of the
group $SU(2)$ can be written in terms of Krawtchouk polynomials.
This gives a connection between the Krawtchouk polynomials and
$\sltwo$ since the irreducible representations of $SU(2)$ and
$\sltwo$ are essentially the same.
See \cite[Section 2]{Koe}, \cite[Sections 1,2]{Ros2}, 
\cite[Section 6.8.1]{ViK} for more work on this topic.
Later there appeared some articles that gave a connection
between Krawtchouk polynomials and $\sltwo$:
\cite{Fe1}, \cite{Fe}, \cite[Section 4]{FeK}, \cite[Chapter 5, IV]{FeS}.
In each of these articles, the above pair $S,D$ 
acts as a bridge between $\sltwo$ and Krawtchouk polynomials.
On one hand, the matrix $S$ (resp. $D$) represents the action of
$e+f$ (resp. $h$) on the irreducible $\sltwo$-module with
dimension $N+1$.
Here $e,f,h$ denote the usual Chevalley basis for $\sltwo$.
On the other hand $S$ and $D$ are related to the Krawtchouk polynomials
$K_i(x;1/2,N)$ in the following way.
Sylvester \cite{Syl} observed that the matrix $S$ has eigenvalues
$\{N-2i\}_{i=0}^N$;
this was recalled by Askey in \cite[Section 1]{As}.
Since $S$ has mutually distinct eigenvalues $\{N-2i\}_{i=0}^N$,
there exists an invertible matrix $P$ such that $PSP^{-1} = D$.
It turns out that, after a suitable normalization,
the entries of $P$ are
\begin{align*}
   P_{ij} &= \binom{N}{i} K_i(j;1/2,N),
   & i,j&=0,1,\ldots,N.
\end{align*}
As far as we know this fact was first observed by Kac \cite[Section 4]{Ka}
in the context of probability theory.
It later appeared in combinatorics, in the context of
the Hamming association scheme \cite[Theorem 4.2]{De}; see also
\cite[Theorem 6]{Sl} and \cite[III.2]{BI}.

We now summarize the contents of the present paper.
We consider a type of element in $\sltwo$ said to be normalized semisimple.
Our main object of study is a pair $a,a^*$ of normalized 
semisimple elements that generate $\sltwo$.
We show that $a,a^*$ satisfy a pair of relations
\begin{align*}
  [a,[a,a^*]] &= 4(2p-1) a + 4a^*,  \\
  [a^*,[a^*,a]] &= 4(2p-1)a^* + 4a,
\end{align*}
where the scalar $p$ depends on the $\sltwo$ Killing form applied to $a,a^*$.
The above equations are a special case of the Askey-Wilson relations 
\cite[(3.2)]{GLZ}, \cite[Theorem 1.5]{TV}. 
We show that $\sltwo$ has a presentation involving generators $a,a^*$
subject to the above relations.
We describe $\sltwo$ from the point of view of this presentation.
We show that $\sltwo$ admits an antiautomorphism $\dagger$ that 
fixes each of $a,a^*$.
For all feasible integers $N$
we consider an $(N+1)$-dimensional irreducible $\sltwo$-module $V$ 
consisting of the homogeneous polynomials in two variables that have 
total degree $N$.
We display a nondegenerate symmetric bilinear form
$\b{\;,\;}$ on $V$ such that
$\b{\vphi.u,v}=\b{u,\vphi^\dagger.v}$ for all
$\vphi \in \sltwo$ and $u,v \in V$.
We display two bases for $V$, denoted
$\{v_i\}_{i=0}^N$ and $\{v^*_i\}_{i=0}^N$;
the basis $\{v_i\}_{i=0}^N$ diagonalizes $a$ and the basis
$\{v^*_i\}_{i=0}^N$ diagonalizes $a^*$.
We show that each of these bases is orthogonal with respect to
$\b{\;,\;}$.
We show that
\begin{align*}
  \b{v_i,v^*_j} &= K_i(j;p,N),
 & i,j &=0,1,\ldots,N.
\end{align*}
Using these results we recover some well-known facts about
Krawtchouk polynomials,
such as the three-term recurrence, the orthogonality,
the difference equation, and the generating function.
We interpret these facts in terms of matrices.
Finally we show that the pair $a,a^*$ acts on the above $\sltwo$-module $V$ 
as a Leonard pair of Krawtchouk type, and every Leonard pair of Krawtchouk 
type is obtained in this way.

The paper is organized as follows.
In Section \ref{sec:sl2}, after recalling some basic materials
concerning $\sltwo$, 
we describe a pair of normalized semisimple elements that generate $\sltwo$.
In Section \ref{sec:sl2Krawt} we describe how finite-dimensional
irreducible $\sltwo$-modules look from the point of view of these elements.
In this description we make heavy use of Krawtchouk polynomials.
Along the way we recover some well-known facts about Krawtchouk polynomials.
In Section \ref{sec:matrices} 
these facts are interpreted in terms of matrices.
In Section \ref{sec:LP} we bring in the notion of a Leonard pair. 
After obtaining some basic facts about general Leonard pairs,
we focus on Leonard pairs of Krawtchouk type. 
In Section \ref{sec:LPKrawt} we characterize Leonard pairs
of Krawtchouk type as described in the last sentence of the previous 
paragraph.

\section{The Lie algebra $\mathfrak{sl}_2(\F)$}
\label{sec:sl2}

\indent
Throughout this section assume $\F$ is algebraically closed.
For all integers $n \geq 1$ let $\text{Mat}_n(\F)$ denote the $\F$-algebra 
consisting of the
$n\times n$ matrices that have all entries in $\F$.

\medskip

The Lie algebra $\sltwo(\F)$ consists of the matrices in $\text{Mat}_2(\F)$
that have trace $0$, together with the Lie bracket $[y,z]=yz-zy$.
We abbreviate $L=\sltwo(\F)$.
$L$ has a basis
\begin{align}          \label{eq:ehf}
 e &= \begin{pmatrix}
      0 & 1 \\
      0 & 0
    \end{pmatrix},
&
 h &= \begin{pmatrix}
       1 & 0 \\
       0 & -1
      \end{pmatrix},
&
 f &= \begin{pmatrix}
       0 & 0 \\
       1 & 0
      \end{pmatrix}.
\end{align}
This basis satisfies
\begin{align}     \label{eq:refehf}
 [h,e] &= 2e,  &  [h,f] &= -2f, & [e,f] &= h.
\end{align}

\medskip

For $y \in L$ the {\em adjoint map}
$\ad y : L \to L$ is the $\F$-linear transformation that sends 
$z \mapsto [y,z]$ for $z \in L$.
The {\em Killing form} is the bilinear form
$(\;,\;) :  L \times L \to  \F$ such that 
$(y,z) = \tr(\ad y \> \ad z)$ for $y,z \in L$, where $\tr$ means trace.
For notational convenience define a bilinear form
$\b{\;,\;} : L \times L \to \F$ such that
$\b{y,z} = \frac{1}{8} (y,z)$ for $y,z \in L$.
We abbreviate $\|y\|^2 = \b{y,y}$.
The values of $\b{\;,\;}$ 
on the elements \eqref{eq:ehf} are given as follows.
\begin{equation}     \label{eq:bilin}
 \begin{array}{c|ccc}
  \b{\;,\;}  & e & h & f \\   \hline
  e & 0 & 0 & \frac{1}{2} \\
  h & 0 & 1 & 0 \\
  f & \frac{1}{2} & 0 & 0
 \end{array}
\end{equation}
Using \eqref{eq:bilin} one finds $\b{y,z} = \tr(yz)/2$ 
for $y,z \in L$.

Pick $y \in L$ and write
\begin{equation}     \label{eq:y}
  y  = \begin{pmatrix}
        \alpha & \beta \\
        \gamma & - \alpha
      \end{pmatrix}
    = \beta e + \alpha h + \gamma f.
\end{equation}
Then $\|y\|^2 = \alpha^2 + \beta\gamma = -\det(y)$.
Let $r,s$ denote the eigenvalues of $y$.
Then $r+s=0$ and $rs=-\|y\|^2$.

\medskip

By an {\em automorphism} of $L$ we mean an isomorphism of $\F$-vector spaces
$\sigma : L \to L$ such that $[y,z]^\sigma=[y^\sigma,z^\sigma]$ for $y,z \in L$.
Let $\sigma$ denote an automorphism of $L$.
Observe that $\ad (y^\sigma) = \sigma(\ad y)\sigma^{-1}$ for $y \in L$.
Using this we find $\b{y,z} = \b{y^\sigma,z^\sigma}$ for $y,z \in L$.
In particular $\|y\|^2=\|y^\sigma\|^2$ for $y \in L$.

\medskip

The following result is well-known \cite[Section 2.3]{Humph};
we give a short proof for the sake of completeness.

\medskip

\begin{lemma}  {\rm \cite[Section 2.3]{Humph}}    \label{lem:autoinner}   \samepage
The following hold.
\begin{itemize}
\item[\rm (i)]
Let $M$ denote an invertible matrix in $\text{\rm Mat}_2(\F)$.
Then the map $L \to L$, $y \mapsto MyM^{-1}$ is an automorphism of $L$.
\item[\rm (ii)]
Let $\sigma$ denote an automorphism of $L$.
Then there exists an invertible $M \in \text{\rm Mat}_2(\F)$
such that $y^\sigma = MyM^{-1}$ for $y \in L$.
\end{itemize}
\end{lemma}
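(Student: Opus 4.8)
The plan is to dispatch (i) by direct verification and then use (i) repeatedly inside a normalization argument for (ii). For (i), conjugation by an invertible $M$ preserves trace, so $MyM^{-1}\in L$ whenever $y\in L$; the map is $\F$-linear with inverse $z\mapsto M^{-1}zM$, and it respects the bracket because $M[y,z]M^{-1}=[MyM^{-1},MzM^{-1}]$. The one observation I would record for later use is that these conjugation maps are closed under composition and inversion: conjugation by $B$ after conjugation by $A$ is conjugation by $BA$. This closure is exactly what lets the reduction in (ii) collapse to a single matrix at the end.

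The strategy for (ii) is to post-compose $\sigma$ with conjugation automorphisms (each legitimate by (i)) until the resulting automorphism fixes the basis $e,h,f$ and is therefore the identity; unwinding those conjugations then exhibits $\sigma$ itself as conjugation by one invertible matrix $M$. The first step is to normalize $e$. Since $\|e\|^2=0$ by \eqref{eq:bilin} and automorphisms preserve $\b{\;,\;}$, we get $\|e^\sigma\|^2=0$, so by $\|y\|^2=-\det(y)$ the element $e^\sigma$ is a nonzero nilpotent matrix in $L$ with $(e^\sigma)^2=0$. A short kernel/image argument shows that any nonzero nilpotent element of $\sltwo(\F)$ is conjugate to $e$, so there is an invertible $M_0$ with $M_0\,e^\sigma M_0^{-1}=e$. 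Replacing $\sigma$ by the composite of $\sigma$ with conjugation by $M_0$, I may assume $e^\sigma=e$.

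Next I would normalize $h$ and $f$ using the relations \eqref{eq:refehf}. Applying $\sigma$ to $[h,e]=2e$ gives $[h^\sigma,e]=2e$; writing $h^\sigma$ as a trace-zero matrix and solving this forces $h^\sigma=\begin{pmatrix}1&b\\0&-1\end{pmatrix}$ for some $b\in\F$. Conjugating by $\begin{pmatrix}1&t\\0&1\end{pmatrix}$ fixes $e$ and sends $h^\sigma$ to $\begin{pmatrix}1&b-2t\\0&-1\end{pmatrix}$, so taking $t=b/2$ (legitimate since $\mathrm{Char}(\F)\neq 2$) arranges $h^\sigma=h$ as well. Finally, applying $\sigma$ to $[h,f]=-2f$ and $[e,f]=h$ yields $[h,f^\sigma]=-2f^\sigma$ and $[e,f^\sigma]=h$; the first relation forces $f^\sigma$ to be a scalar multiple of $f$, and the second pins that scalar to $1$, so $f^\sigma=f$. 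At this stage $\sigma$ fixes $e,h,f$ and hence is the identity, and running the two conjugations backward gives $\sigma$ as conjugation by the single matrix $M=(M_1M_0)^{-1}$.

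The main obstacle is the first normalization step: recognizing $e^\sigma$ as a nonzero nilpotent via the invariance of the norm form, and then verifying that every nonzero nilpotent element of $\sltwo(\F)$ is conjugate to $e$. This is where the earlier facts $\|y\|^2=-\det(y)$ and $\b{y,z}=\b{y^\sigma,z^\sigma}$ do the real work; the subsequent normalizations of $h$ and $f$ are forced, routine matrix computations. The only other point requiring care is bookkeeping the order in which the conjugations are composed, so that the final unwinding correctly produces $\sigma$ as conjugation by one invertible matrix rather than merely a product of such maps.
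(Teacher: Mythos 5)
Your proof is correct, and part (i) together with the overall strategy for (ii) --- post-composing $\sigma$ with inner automorphisms until it fixes the basis $e,h,f$, then unwinding --- matches the paper. The difference is in how the normalization is organized. The paper normalizes $h$ first: since $\|h^\sigma\|^2=\|h\|^2=1$, the element $h^\sigma$ is diagonalizable with eigenvalues $1,-1$ and hence conjugate to $h$; after that, the $\ad h$-eigenspace relations \eqref{eq:refehf} force $e^\sigma=\beta e$ and $f^\sigma=\gamma f$ with $\beta\gamma=1$, and a single diagonal conjugation by $\text{diag}(\beta,1)$ finishes in one stroke. You instead normalize $e$ first, which requires the conjugacy of nonzero nilpotents in $\sltwo(\F)$ (your kernel/image argument is the standard one and does work: if $n\neq 0$ and $n^2=0$, pick $v\notin\Ker n$ and use the basis $nv,v$), then a second, unipotent conjugation to fix $h$, after which $f^\sigma=f$ comes for free. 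Both routes lean on the same key invariance $\b{y,z}=\b{y^\sigma,z^\sigma}$ to identify the conjugacy class of the first normalized element; yours costs one extra conjugation and the nilpotent-orbit fact, while the paper's costs the (equally elementary) fact that a trace-zero matrix with distinct eigenvalues $1,-1$ is conjugate to $h$. Your bookkeeping of the composition order, yielding $M=(M_1M_0)^{-1}$, is also correct.
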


\begin{proof}
(i): Clear.

(ii):
By \eqref{eq:ehf} $h$ has eigenvalues $1$, $-1$.
Observe that $\| h^\sigma \|^2=\|h\|^2$ so $h^\sigma$ has eigenvalues
$1$, $-1$.
Therefore there exists an invertible $P \in \text{Mat}_2(\F)$ such that 
$h^\sigma = PhP^{-1}$.
Since the map $L \to L$, $y \mapsto PyP^{-1}$ is an automorphism of $L$, 
we may assume without loss of generality that $h^\sigma=h$.
By \eqref{eq:refehf} the element $e$ is a basis for the eigenspace of 
$\ad h$ associated with the eigenvalue $2$.
Applying $\sigma$ and using $h^\sigma=h$ we see that 
$e^\sigma$ is in this eigenspace. 
Therefore there exists $\beta \in \F$ such that $e^\sigma = \beta e$. 
Similarly there exists $\gamma \in \F$ such that 
$f^\sigma = \gamma f$. 
Using $[e^\sigma,f^\sigma] = h$ we find $\beta \gamma =1$.
Define the matrix $M=  \text{diag}(\beta, 1)$.
By the above comments $y^\sigma = MyM^{-1}$ for all $y \in \{e,h,f\}$. 
Therefore $y^\sigma = MyM^{-1}$ for all $y \in L$.
\end{proof}

\medskip

We recall a few definitions. 
Let $V$ denote a nonzero  finite-dimensional vector space over $\F$ and let 
$A:V \to V$ denote an $\F$-linear transformation.
We say that $A$ is {\it diagonalizable} whenever
$V$ has a basis consisting of eigenvectors for $A$.
Let $\{u_i\}_{i=1}^n$ denote a basis for $V$. 
For $B \in \text{Mat}_n(\F)$ we say that 
{\em $B$ represents $A$ with respect to $\{u_i\}_{i=1}^n$}
whenever $Au_j=\sum_{i=1}^n B_{ij}u_i$ for $j=1,2,\ldots,n$.

\medskip

\begin{lemma}      \label{lem:matrixady}    \samepage
Let $y \in L$ be as in \eqref{eq:y}.
Then with respect to the basis $e,h,f$ the matrix representing 
the $\F$-linear transformation $\ad y : L \to L$ is
\begin{equation}    \label{eq:matady}
 \begin{pmatrix}
   2 \alpha & -2 \beta & 0 \\
  -\gamma & 0 & \beta  \\
  0 & 2\gamma & -2\alpha
 \end{pmatrix}.
\end{equation}
\end{lemma}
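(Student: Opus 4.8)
The plan is to compute the image of each basis vector $e,h,f$ under $\ad y$ directly, and then read off the three columns of the desired matrix. Recall from the definition of representing matrix that if $B$ represents $A$ with respect to $\{u_i\}$ then $Au_j=\sum_i B_{ij}u_i$, so the coordinate vector of $\ad y(u_j)=[y,u_j]$ in the basis $e,h,f$ is precisely the $j$th column of \eqref{eq:matady}. Thus it suffices to expand each bracket. Writing $y=\beta e+\alpha h+\gamma f$ as in \eqref{eq:y} and using the bilinearity of the Lie bracket, each $[y,u_j]$ breaks into three elementary brackets among $e,h,f$, all of which are governed by the structure constants in \eqref{eq:refehf} together with the antisymmetry $[z,w]=-[w,z]$ and $[z,z]=0$.

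Carrying this out, I would first note $[e,e]=[h,h]=[f,f]=0$ and $[f,e]=-h$, $[e,h]=-2e$, $[f,h]=2f$, $[h,f]=-2f$, $[e,f]=h$, all immediate from \eqref{eq:refehf}. Then the three computations collapse to
\begin{align*}
 [y,e] &= 2\alpha e - \gamma h, & [y,h] &= -2\beta e + 2\gamma f, & [y,f] &= \beta h - 2\alpha f.
\end{align*}
Reading off coordinates in the ordered basis $e,h,f$, these give the columns $(2\alpha,-\gamma,0)^{\mathrm t}$, $(-2\beta,0,2\gamma)^{\mathrm t}$, and $(0,\beta,-2\alpha)^{\mathrm t}$ respectively, which assemble exactly into the matrix \eqref{eq:matady}.

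There is essentially no genuine obstacle here; the content is a routine structure-constant computation. The only points requiring care are bookkeeping ones: keeping the signs straight when passing from $[h,e]$ and $[h,f]$ to $[e,h]$ and $[f,h]$ via antisymmetry, and remembering that it is the $j$th \emph{image} that forms the $j$th \emph{column} (not row) of the representing matrix, so that the off-diagonal signs land in the correct positions. As a consistency check one can verify that the trace of \eqref{eq:matady} is $0$, as it must be since $\ad y$ is a commutator-type operator on a Lie algebra, and that the entries are linear in $\alpha,\beta,\gamma$, reflecting the linearity of $y\mapsto\ad y$.
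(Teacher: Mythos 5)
Your proposal is correct and is exactly the computation the paper intends: its entire proof is ``Use \eqref{eq:refehf},'' and your expansion of $[y,e]$, $[y,h]$, $[y,f]$ via the structure constants, read off as columns, is that computation carried out explicitly. All three columns check out against \eqref{eq:matady}.
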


\begin{proof}
Use \eqref{eq:refehf}.
\end{proof}

\begin{corollary}     \label{cor:ady}   \samepage
Let $y$ denote an element of $L$ and let $r$, $-r$ denote the
eigenvalues of $y$.
Then the eigenvalues of the $\F$-linear transformation
$\ad y : L \to L$ are $2r,0,-2r$.
\end{corollary}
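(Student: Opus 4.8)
The plan is to reduce everything to the explicit matrix already computed in Lemma~\ref{lem:matrixady}. Since the eigenvalues of a linear transformation coincide with those of any matrix representing it, the eigenvalues of $\ad y$ are exactly the eigenvalues of the $3\times 3$ matrix \eqref{eq:matady} with respect to the basis $e,h,f$. So I would simply compute the characteristic polynomial of that matrix and factor it. Writing $y$ as in \eqref{eq:y}, the whole corollary becomes a determinant calculation in the entries $\alpha,\beta,\gamma$.

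Concretely, I would form $\det(\lambda I - M)$ where $M$ is the matrix \eqref{eq:matady}, expanding along the first row, which is convenient since one of its entries is $0$. After collecting terms the crucial observation is that the contributions involving $\alpha\beta\gamma$ cancel, and one is left with a characteristic polynomial of the form $\lambda\bigl(\lambda^2 - 4(\alpha^2+\beta\gamma)\bigr)$ up to an overall sign. At this point I would invoke the discussion following \eqref{eq:y}: since $r,-r$ are the eigenvalues of $y$, we have $r(-r) = -\|y\|^2$, hence $r^2 = \|y\|^2 = \alpha^2 + \beta\gamma$. Substituting, the characteristic polynomial factors as $\lambda(\lambda-2r)(\lambda+2r)$, so the eigenvalues of $\ad y$ are $2r$, $0$, $-2r$, as claimed.

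A more conceptual alternative, which I would mention but not adopt as the main route, uses the automorphism machinery: when $r\neq 0$ the element $y$ has distinct eigenvalues $r,-r$ and is therefore diagonalizable, hence conjugate to $rh$ by an invertible matrix $M$; by Lemma~\ref{lem:autoinner} this conjugation is an automorphism $\sigma$, and since $\ad(y^\sigma) = \sigma(\ad y)\sigma^{-1}$ the eigenvalues of $\ad y$ equal those of $\ad(rh) = r\,\ad h$, which are $2r,0,-2r$ by \eqref{eq:refehf}. The drawback of this approach is the main obstacle it faces, namely the case $r=0$, where $y$ is nilpotent and not diagonalizable, so the reduction to $rh$ fails and a separate argument (or a continuity/density argument) is required. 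The direct determinant computation handles $r=0$ uniformly along with every other case, so I would prefer it; the only step needing genuine care is recognizing the cancellation of the $\alpha\beta\gamma$ terms and making the substitution $\alpha^2+\beta\gamma = r^2$, the expansion itself being routine.
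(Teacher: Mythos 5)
Your proposal is correct and follows exactly the paper's own proof: reduce to the explicit matrix of Lemma~\ref{lem:matrixady}, compute its characteristic polynomial (which indeed simplifies to $\lambda(\lambda^2-4(\alpha^2+\beta\gamma))$ after the $\alpha\beta\gamma$ terms cancel), and substitute $r^2=\alpha^2+\beta\gamma$. The alternative conjugation argument you mention is a reasonable aside, and you correctly identify why it does not handle the nilpotent case $r=0$ uniformly.
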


\begin{proof}
Without loss we may assume that $y$ is from \eqref{eq:y},
so that Lemma \ref{lem:matrixady} applies.
Compute the characteristic polynomial of \eqref{eq:matady}
and simplify using $r^2=\alpha^2+\beta\gamma$.
\end{proof}

 \medskip

An element $y \in L$ is said to be {\em semisimple} whenever 
the $\F$-linear transformation $\ad y : L \to L$ is diagonalizable.
Let $\sigma$ denote an automorphism of $L$.
Then $y$ is semisimple if and only if $y^\sigma$ is semisimple.

\medskip

Let $0 \neq y \in L$ and let $r$, $-r$ denote the eigenvalues of $y$.
We have two cases:

\medskip

\begin{itemize}
\item[] Case $r=0$:
 $\;\;\;\;  y^2=0$, $\;\; \|y\|^2=0$, $\;\; \text{det}(y)=0$;
\item[] Case $r \neq 0$:
 $\;\;\;\;  y$ is diagonalizable,
 $\;\; \|y\|^2 \neq 0$, $\;\; \text{det}(y) \neq 0$.
\end{itemize} 

\medskip

The following result is well-known \cite[Section 4.2]{Humph};
we give a short proof for the sake of completeness.

\medskip

\begin{lemma}  {\rm  \cite[Section 4.2]{Humph} } \label{lem:ss}    \samepage
For $y \in L$ the following are equivalent:
\begin{itemize}
\item[\rm (i)]
$y$ is semisimple.
\item[\rm (ii)]
$y$ is diagonalizable.
\end{itemize}
\end{lemma}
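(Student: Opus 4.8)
The plan is to reduce both implications to the eigenvalue computation in Corollary \ref{cor:ady}, combined with the dichotomy for a nonzero $y$ displayed just before the lemma. First I would dispose of the case $y=0$: here $y$ is trivially diagonalizable, and $\ad y = 0$ is trivially diagonalizable, so (i) and (ii) both hold. For the remainder assume $y \neq 0$, and let $r,-r$ denote the eigenvalues of $y$. By the dichotomy, $y$ is diagonalizable precisely when $r \neq 0$: indeed in the case $r=0$ we have $y^2=0$ with $y \neq 0$, so $y$ is a nonzero nilpotent and hence not diagonalizable, while in the case $r \neq 0$ the dichotomy asserts that $y$ is diagonalizable.

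For (ii) $\Rightarrow$ (i), suppose $y$ is diagonalizable. By the previous paragraph $r \neq 0$. By Corollary \ref{cor:ady} the transformation $\ad y$ has eigenvalues $2r,0,-2r$. Since $\text{Char}(\F) \neq 2$ and $r \neq 0$, these three scalars are mutually distinct, so $\ad y$ has three distinct eigenvalues acting on the three-dimensional space $L$. Therefore $\ad y$ is diagonalizable, which is (i).

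For (i) $\Rightarrow$ (ii) I would argue by contraposition. Suppose $y$ is not diagonalizable. Then by the first paragraph $r=0$, so by Corollary \ref{cor:ady} every eigenvalue of $\ad y$ equals $0$. On the other hand $\ad y \neq 0$: from the explicit matrix \eqref{eq:matady} in Lemma \ref{lem:matrixady}, $\ad y = 0$ forces $\alpha=\beta=\gamma=0$ and hence $y=0$, contrary to $y \neq 0$. Thus $\ad y$ is a nonzero transformation all of whose eigenvalues are $0$, so $\ad y$ is not diagonalizable; that is, $y$ is not semisimple. This establishes the contrapositive of (i) $\Rightarrow$ (ii).

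The only real subtlety, the hard part, is the nilpotent case $r=0$: one must observe that a nonzero nilpotent $y$ produces a \emph{nonzero} $\ad y$ whose only eigenvalue is $0$, so that nilpotency is transferred faithfully from $y$ to $\ad y$. The role of $\text{Char}(\F) \neq 2$ is likewise essential, and enters only in the forward direction, to guarantee that $2r,0,-2r$ are genuinely distinct rather than collapsing.
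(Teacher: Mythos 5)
Your proof is correct and follows essentially the same route as the paper's: both directions reduce to Corollary \ref{cor:ady} together with the dichotomy stated just before the lemma, with the reverse implication handled via the nonzero nilpotent case. Your write-up merely makes explicit two points the paper leaves tacit (the trivial case $y=0$ and the verification that $\ad y\neq 0$), which is fine.
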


\begin{proof}
To avoid trivialities assume $y \not=0$.
Let $r$, $-r$ denote the eigenvalues of $y$. 
First assume that $y$ is diagonalizable. 
We have $r\not= 0$ and  $\text{Char}(\F) \not=2$ so 
$2r$, $0$, $-2r$ are mutually distinct. 
Now $\ad y$ is diagonalizable so $y$ is semisimple.
Next assume that $y$ is not diagonalizable. 
Then $r=0$ so $\ad y$ has all eigenvalues zero. 
The linear transformation $\ad y$ is nonzero and nilpotent.
Therefore $\ad y$ is not diagonalizable so $y$ is not semisimple.
\end{proof}

\medskip

For the following lemma the proof is routine and left to the reader.

\medskip

\begin{lemma}                   \label{lem:2.1}   \samepage
For $y \in L$ the following are equivalent:
\begin{itemize}
\item[\rm (i)]
$\|y\|^2=1$.
\item[\rm (ii)]
$\det(y)=-1$.
\item[\rm (iii)]
$y$ is diagonalizable with eigenvalues $1$, $-1$.
\item[\rm (iv)]
There exists an automorphism of $L$ that sends $y \mapsto h$.
\end{itemize}
\end{lemma}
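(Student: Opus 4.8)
The plan is to establish the cycle of implications (i)$\Rightarrow$(iii)$\Rightarrow$(iv)$\Rightarrow$(i), supplemented by the quick equivalence (i)$\Leftrightarrow$(ii). Throughout I would lean on the three facts recorded just after \eqref{eq:y}: that $\|y\|^2 = -\det(y)$; that the eigenvalues of $y$ are $r,-r$ with $rs = -\|y\|^2$ and hence $r^2 = \|y\|^2$; and the case analysis showing that a nonzero $y$ is diagonalizable exactly when $r \neq 0$. The equivalence (i)$\Leftrightarrow$(ii) is then immediate, since $\|y\|^2 = -\det(y)$ forces $\|y\|^2 = 1$ to be the same condition as $\det(y) = -1$.

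For (i)$\Rightarrow$(iii), assuming $\|y\|^2 = 1$, the eigenvalue $r$ satisfies $r^2 = 1 \neq 0$, so $r \neq 0$ and $y$ is diagonalizable by the case $r \neq 0$ above; moreover $r^2 = 1$ makes the eigenvalue pair $r,-r$ equal to $1,-1$. For the converse direction implicit in the cycle, note a diagonalizable $y$ with eigenvalues $1,-1$ has $\det(y) = -1$, returning us to (ii). For (iii)$\Rightarrow$(iv), I would observe that $h$ is diagonalizable with eigenvalues $1,-1$ by \eqref{eq:ehf}, so $y$ and $h$ are two diagonalizable matrices in $\Mat_2(\F)$ with the same eigenvalues counted with multiplicity; hence they are similar, and there is an invertible $M \in \Mat_2(\F)$ with $M^{-1} y M = h$. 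By Lemma \ref{lem:autoinner}(i) the map $L \to L$, $z \mapsto M^{-1} z M$ is an automorphism of $L$, and it sends $y \mapsto h$. For (iv)$\Rightarrow$(i), if an automorphism $\sigma$ satisfies $y^\sigma = h$, then using the invariance $\|y\|^2 = \|y^\sigma\|^2$ noted earlier together with $\|h\|^2 = 1$ from \eqref{eq:bilin}, I conclude $\|y\|^2 = 1$.

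The whole argument is routine once the preliminary facts are in hand; the only step demanding mild care is (iii)$\Rightarrow$(iv), where one must pass from similarity of the matrices $y$ and $h$ to a genuine automorphism of the Lie algebra $L$. This is exactly what Lemma \ref{lem:autoinner}(i) supplies, so no real obstacle remains. The existence of the conjugating matrix $M$ is guaranteed because two $2 \times 2$ diagonalizable matrices sharing the same eigenvalues are conjugate over the (algebraically closed) field $\F$.
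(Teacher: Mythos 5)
Your proposal is correct. The paper explicitly leaves this proof to the reader as routine, and your argument fills it in using exactly the preliminary facts the paper sets up for the purpose: the identity $\|y\|^2=-\det(y)$ and the case analysis after \eqref{eq:y} give (i)$\Leftrightarrow$(ii) and (i)$\Rightarrow$(iii); similarity of diagonalizable $2\times 2$ matrices with equal eigenvalues plus Lemma \ref{lem:autoinner}(i) gives (iii)$\Rightarrow$(iv); and the automorphism-invariance of $\b{\;,\;}$ together with $\|h\|^2=1$ from \eqref{eq:bilin} closes the cycle with (iv)$\Rightarrow$(i).
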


\medskip

Given a semisimple $y \in L$,
we say $y$ is {\em normalized} whenever $\|y\|^2=1$.

\medskip

\begin{definition}   \label{def:p} \samepage
For a pair $a,a^*$ of normalized semisimple elements of $L$,
we define $p \in \F$ such that $\b{a,a^*}=1-2p$.
We call $p$ the {\em corresponding parameter} for the pair $a,a^*$.
\end{definition}

\begin{example}     \label{exam:123}
Consider the pair of matrices
\begin{align}               \label{eq:exam123}
 a &= \begin{pmatrix}
        \alpha & \beta \\
        \gamma & - \alpha
     \end{pmatrix},
 & 
 a^* &= \begin{pmatrix}
          1 & 0 \\
          0 & -1
        \end{pmatrix},
\end{align}
where $\alpha^2+\beta\gamma=1$.
Observe that each of $a,a^*$ is a normalized semisimple element of $L$.
For this pair the corresponding parameter $p$ satisfies
$p=(1-\alpha)/2$ since $\b{a,a^*}=\alpha$.
Note that $\alpha=1-2p$ and $\beta\gamma=4p(1-p)$.
\end{example}

\begin{example}    \label{exam:123b}
Consider the pair of matrices
\begin{align}           \label{eq:exam123b}    \samepage
 a &= \begin{pmatrix}
        1-2p & 2(1-p)  \\
        2p   & 2p-1
       \end{pmatrix},
&
 a^* &= \begin{pmatrix}
         1 & 0 \\
         0 & -1
        \end{pmatrix},
\end{align}
where $p \in \F$.
Then each of $a,a^*$ is a normalized semisimple element of $L$,
and $p$ is the corresponding parameter.
This is a special case of Example \ref{exam:123} with
$\beta=2(1-p)$ and $\gamma=2p$.
\end{example}

\begin{lemma}     \label{lem:124}     \samepage
Let $a,a^*$ denote a pair of normalized semisimple elements of $L$,
and let $p$ denote the corresponding parameter.
Then the values of $\b{\;,\;}$ on the elements $a$, $a^*$, $[a,a^*]$
are given as follows:
\[
 \begin{array}{c|ccc}
  \b{\;,\;} & a & a^* & [a,a^*]  \\ \hline
  a   &  1  & 1-2p & 0 
\\
  a^* & 1-2p & 1 & 0 
\\ \,
  [a,a^*] \, & 0 & 0 & -16p(1-p)
 \end{array}
\]
The above matrix has determinant $-64p^2(1-p)^2$.
\end{lemma}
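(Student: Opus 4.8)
The plan is to compute each entry of the $3\times 3$ matrix of $\b{\;,\;}$-values directly, using the bilinearity of $\b{\;,\;}$, the given hypotheses, and the formula $\b{y,z}=\tr(yz)/2$ established after \eqref{eq:bilin}. Since $a,a^*$ are normalized semisimple, we have $\|a\|^2=\|a^*\|^2=1$ by definition, which gives the diagonal entries $\b{a,a}=\b{a^*,a^*}=1$. The entry $\b{a,a^*}=1-2p$ is immediate from Definition \ref{def:p}, and symmetry of $\b{\;,\;}$ handles $\b{a^*,a}$.

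The heart of the computation lies in the entries involving $[a,a^*]$. The plan is to reduce to a convenient normal form. By Lemma \ref{lem:2.1}(iv) there is an automorphism of $L$ sending $a^*\mapsto h$; since automorphisms preserve $\b{\;,\;}$, the Lie bracket, and normalized semisimplicity, we may assume without loss of generality that $a^*=h$. Writing $a=\beta e+\alpha h+\gamma f$ as in \eqref{eq:y}, the condition $\|a\|^2=1$ forces $\alpha^2+\beta\gamma=1$, and $\b{a,a^*}=1-2p$ together with $\b{h,\cdot}$ from \eqref{eq:bilin} gives $\alpha=1-2p$, whence $\beta\gamma=1-\alpha^2=4p(1-p)$. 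This is exactly the situation of Example \ref{exam:123}.

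Next I would compute $[a,a^*]=[a,h]=-[h,a]$ using \eqref{eq:refehf}: since $[h,e]=2e$ and $[h,f]=-2f$, we get $[a,h]=-2\beta e+2\gamma f$. To find the cross terms I use the table \eqref{eq:bilin}: $\b{a,[a,a^*]}$ and $\b{a^*,[a,a^*]}$ each pair an element against a combination of $e,f$ only, and the relevant off-diagonal entries in \eqref{eq:bilin} cancel out to give $0$ in both cases (for $\b{a^*,[a,a^*]}=\b{h,-2\beta e+2\gamma f}$ every term vanishes since $\b{h,e}=\b{h,f}=0$; for $\b{a,[a,a^*]}$ the $e$-$f$ pairings cancel against each other). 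Finally $\|[a,a^*]\|^2=\b{-2\beta e+2\gamma f,-2\beta e+2\gamma f}=-8\beta\gamma=-16p(1-p)$, using $\b{e,f}=\b{f,e}=\tfrac12$ from \eqref{eq:bilin} and $\beta\gamma=4p(1-p)$.

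The determinant claim then follows by expanding the block-diagonal matrix: the lower-right entry $-16p(1-p)$ times the determinant of the top-left $2\times 2$ block $\begin{pmatrix}1 & 1-2p\\ 1-2p & 1\end{pmatrix}$, whose determinant is $1-(1-2p)^2=4p(1-p)$. Multiplying gives $-64p^2(1-p)^2$, as stated. \textbf{The main obstacle} I anticipate is justifying the reduction to $a^*=h$ cleanly: one must check that passing through the automorphism does not alter any of the quantities being computed, but this is guaranteed since $\b{\;,\;}$, brackets, and normalization are all automorphism-invariant as recorded in the text preceding Lemma \ref{lem:autoinner} and in Lemma \ref{lem:2.1}. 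Everything else is a routine $2\times 2$ matrix or bilinear-form calculation.
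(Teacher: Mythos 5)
Your proposal is correct and follows essentially the same route as the paper: reduce via Lemma \ref{lem:2.1}(iv) to the normal form of Example \ref{exam:123}, compute $[a,a^*]=-2\beta e+2\gamma f$, and read off the inner products from \eqref{eq:bilin}. One minor arithmetic slip: the intermediate value of $\|[a,a^*]\|^2$ should be $-4\beta\gamma$ (two cross terms, each contributing $(-2\beta)(2\gamma)\cdot\tfrac12=-2\beta\gamma$), not $-8\beta\gamma$; with $\beta\gamma=4p(1-p)$ this still yields the stated $-16p(1-p)$.
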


\begin{proof}
By Lemma \ref{lem:2.1}(iv) we may assume 
without loss of generality 
that $a,a^*$ are from Example \ref{exam:123}.
Observe that $[a,a^*]=-2\beta e+2\gamma f$.
Using this and \eqref{eq:bilin} we routinely obtain the results.
\end{proof}

\begin{lemma}     \label{lem:125}     \samepage
Let $a,a^*$ denote a pair of normalized semisimple elements of $L$,
and let $p$ denote the corresponding parameter.
Then the following are equivalent:
\begin{itemize}
\item[\rm (i)]
$p \neq 0$ and $p \neq 1$.
\item[\rm (ii)]
The elements $a$, $a^*$, $[a,a^*]$ form a basis for $L$.
\item[\rm (iii)]
The elements $a$, $a^*$ generate $L$.
\end{itemize}
\end{lemma}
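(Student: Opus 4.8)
The plan is to prove the two equivalences (i)$\Leftrightarrow$(ii) and (ii)$\Leftrightarrow$(iii) separately: a short coordinate computation settles the first, and a structural argument about subalgebras settles the second.

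For (i)$\Leftrightarrow$(ii) I would first normalize the pair. By Lemma \ref{lem:2.1}(iv) there is an automorphism $\sigma$ of $L$ with $(a^*)^\sigma=h$. An automorphism is a vector-space isomorphism that commutes with the bracket and preserves $\b{\;,\;}$ (hence preserves the parameter $p$, semisimplicity, and normalization), so replacing $(a,a^*)$ by $(a^\sigma,h)$ leaves each of (i), (ii), (iii) intact. Thus I may assume $a^*=h$ and, as in Example \ref{exam:123}, write $a=\beta e+\alpha h+\gamma f$ with $\alpha=1-2p$ and $\beta\gamma=4p(1-p)$. Using \eqref{eq:refehf} one computes $[a,a^*]=[a,h]=-2\beta e+2\gamma f$, so in the basis $e,h,f$ the elements $a$, $a^*$, $[a,a^*]$ have coefficient matrix
\[
 \begin{pmatrix} \beta & 0 & -2\beta \\ \alpha & 1 & 0 \\ \gamma & 0 & 2\gamma \end{pmatrix},
\]
whose determinant is $4\beta\gamma=16p(1-p)$. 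Since $\dim L=3$, the three elements form a basis precisely when this determinant is nonzero, i.e. precisely when $p\neq 0$ and $p\neq 1$; this proves (i)$\Leftrightarrow$(ii). As a check, multiplying the square of this determinant by the Gram determinant $-1/4$ of $e,h,f$ from \eqref{eq:bilin} recovers the value $-64p^2(1-p)^2$ of Lemma \ref{lem:124}.

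For (ii)$\Rightarrow$(iii): if $a,a^*,[a,a^*]$ form a basis for $L$, then the subalgebra generated by $a,a^*$ already contains a spanning set of $L$, hence equals $L$. For (iii)$\Rightarrow$(ii) I argue contrapositively. Suppose $a,a^*,[a,a^*]$ do not form a basis; being three vectors in the $3$-dimensional space $L$ they are linearly dependent. If $a,a^*$ are themselves dependent, then since both are normalized we have $a^*=\pm a$, so $[a,a^*]=0$ and $a,a^*$ generate the line $\F a$, a proper subalgebra. Otherwise $a,a^*$ are independent and the dependence forces $[a,a^*]\in\F a+\F a^*$; then $\F a+\F a^*$ is closed under the bracket and is a $2$-dimensional subalgebra containing $a,a^*$. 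In either case $a,a^*$ generate a proper subalgebra, so (iii) fails.

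The computation is routine, so the only points demanding care are verifying that the reduction to $(a^\sigma,h)$ preserves all three conditions, and, in the last step, isolating the degenerate case in which $a,a^*$ are already dependent (forcing $[a,a^*]=0$) before applying the closed-under-bracket argument. I do not anticipate a genuine obstacle.
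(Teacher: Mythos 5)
Your proof is correct. The (ii)$\Leftrightarrow$(iii) part is essentially the paper's argument (the paper runs (iii)$\Rightarrow$(ii) directly rather than contrapositively, but the content --- that $[a,a^*]\in\mathrm{Span}\{a,a^*\}$ would make the span a proper subalgebra, and that dependence of $a,a^*$ forces $[a,a^*]=0$ --- is the same). For (i)$\Leftrightarrow$(ii) you take a genuinely different, and slightly more elementary, route: after normalizing to $a^*=h$ you compute the coordinate determinant $4\beta\gamma=16p(1-p)$ of the three elements with respect to $e,h,f$ and read off both implications at once. The paper instead goes through the Gram matrix of Lemma \ref{lem:124}: for (i)$\Rightarrow$(ii) a nonsingular Gram matrix forces linear independence, and for (ii)$\Rightarrow$(i) it invokes the nondegeneracy of $\b{\;,\;}$ on $L$ to conclude that the Gram matrix of a basis must be nonsingular. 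Your version avoids any appeal to nondegeneracy of the Killing form and is self-contained linear algebra; the paper's version has the advantage of reusing Lemma \ref{lem:124} (already needed elsewhere) and of being basis-free once that lemma is in hand. Your consistency check relating $(\det T)^2\cdot(-\tfrac14)$ to $-64p^2(1-p)^2$ is exactly the bridge between the two arguments, and all the facts you rely on for the reduction (automorphisms preserve the bracket, the form, and semisimplicity) are established in the paper before this point.
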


\begin{proof}
(i)$\Rightarrow$(ii):
In Lemma \ref{lem:124} the matrix of inner products is nonsingular.
Therefore the elements $a$, $a^*$, $[a,a^*]$ are linearly independent
and hence form a basis for $L$.

(ii)$\Rightarrow$(i):
The matrix \eqref{eq:bilin} is nonsingular so the bilinear form $\b{\;,\;}$ 
is nondegenerate on $L$. 
Therefore the matrix in Lemma \ref{lem:124} has nonzero determinant.

(ii)$\Rightarrow$(iii):
Clear.

(iii)$\Rightarrow$(ii):
Observe $[a,a^*] \not\in \text{Span}\{a,a^*\}$;
otherwise $\text{Span}\{a,a^*\}$ is a Lie subalgebra of $L$ 
which contradicts our assumption that $a,a^*$ generate $L$.
Note that $a,a^*$ are linearly independent;
otherwise $[a,a^*]=0$.
By these comments $a,a^*,[a,a^*]$ are linearly independent and
hence form a basis for $L$.
\end{proof}

\begin{lemma}   \label{lem:aasbbs}    \samepage
Let $a,a^*$ and $b,b^*$ denote pairs of normalized semisimple elements
of $L$, each of which generate $L$.
Then the following are equivalent:
\begin{itemize}
\item[\rm (i)]
$\b{a,a^*} = \b{b,b^*}$.
\item[\rm (ii)]
There exists an automorphism of $L$ that sends
$a \mapsto b$ and $a^* \mapsto b^*$.
\end{itemize}
\end{lemma}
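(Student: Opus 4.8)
The implication (ii)$\Rightarrow$(i) is immediate from the material already developed: any automorphism $\sigma$ of $L$ preserves the Killing form, hence preserves $\b{\;,\;}$, so $\b{b,b^*} = \b{a^\sigma,(a^*)^\sigma} = \b{a,a^*}$. I would dispose of this direction in one sentence.

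The substance is (i)$\Rightarrow$(ii), and my plan is to reduce both pairs to a common normal form and then transport between them. Since $a$ is normalized semisimple, Lemma \ref{lem:2.1}(iv) supplies an automorphism sending $a \mapsto h$; because automorphisms preserve $\b{\;,\;}$, the image of $a^*$ is still a normalized semisimple element whose inner product with $h$ equals $\b{a,a^*}$. After this reduction I may assume $a = h$, and likewise for the second pair $b = h$. The task then becomes: given two normalized semisimple elements $a^*, b^*$ with $\b{h,a^*} = \b{h,b^*}$, each generating $L$ together with $h$, produce an automorphism fixing $h$ and carrying $a^* \mapsto b^*$. Writing $a^* = \beta e + \alpha h + \gamma f$ as in \eqref{eq:y}, the normalization $\|a^*\|^2 = 1$ forces $\alpha^2 + \beta\gamma = 1$, and $\b{h,a^*} = \alpha$ by \eqref{eq:bilin}; the generation hypothesis together with Lemma \ref{lem:125} forces $p \neq 0,1$, i.e. $\beta\gamma = 4p(1-p) \neq 0$, so both $\beta$ and $\gamma$ are nonzero. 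Thus with $a = h$ fixed, the common value $\b{h,a^*} = \b{h,b^*}$ pins down the shared $\alpha = 1-2p$, and the only freedom in $a^*$ is the splitting of the product $\beta\gamma = 4p(1-p)$ into its two factors.

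To finish I would exploit exactly the automorphisms of Lemma \ref{lem:autoinner}(i) that fix $h$, namely conjugation by $M = \operatorname{diag}(t,t^{-1})$ for $t \in \F^\times$: such $M$ fixes $h$, sends $e \mapsto t^2 e$ and $f \mapsto t^{-2} f$, hence sends $\beta e + \alpha h + \gamma f \mapsto t^2\beta\, e + \alpha h + t^{-2}\gamma\, f$. Writing $a^* = \beta e + \alpha h + \gamma f$ and $b^* = \beta' e + \alpha h + \gamma' f$ with the same $\alpha$ and with $\beta\gamma = \beta'\gamma' = 4p(1-p)$, I need $t^2 \beta = \beta'$ for some $t$; since $\beta \neq 0$ and $\F$ is algebraically closed, I set $t^2 = \beta'/\beta$, which is solvable, and then $t^{-2}\gamma = (\beta/\beta')\gamma = (\beta/\beta')(4p(1-p)/\beta) = 4p(1-p)/\beta' = \gamma'$ automatically. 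This gives an automorphism fixing $a = h$ and sending $a^* \mapsto b^*$, completing the chain.

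The one point needing genuine care, rather than routine checking, is the bookkeeping of the reduction: after using Lemma \ref{lem:2.1}(iv) to move $a \mapsto h$ I must confirm that the resulting pair still satisfies all the hypotheses (normalized, semisimple, generating $L$, and the preserved inner product), and I should either reduce both pairs separately and compose the inverse of one automorphism with the other, or argue by symmetry that it suffices to treat the case $a = b = h$. I expect the main obstacle to be organizing this composition cleanly so that the final automorphism of the original $L$ is exhibited as a genuine composite $\sigma_2^{-1}\,\sigma_0\,\sigma_1$ of the three maps (the two normalizations and the diagonal one), and in verifying that the algebraic closure of $\F$ is what licenses the extraction of the square root $t$; the remaining computations are direct applications of \eqref{eq:bilin} and Lemma \ref{lem:125}.
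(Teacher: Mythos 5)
Your proof is correct and takes essentially the same route as the paper: both arguments use Lemma \ref{lem:2.1}(iv) to move one element of each pair to $h$ and then apply a diagonal conjugation fixing $h$ to rescale the off-diagonal coefficients of the other element, the only difference being that the paper sends both pairs to the single canonical form \eqref{eq:exam123b} via $M=\operatorname{diag}(1,2p\gamma^{-1})$ whereas you match the two reduced pairs directly. One cosmetic remark: the square-root extraction $t^2=\beta'/\beta$ (and hence the appeal to algebraic closure at that step) is avoidable, since conjugation by $\operatorname{diag}(1,s)$ with $s=\beta/\beta'$ achieves the same rescaling $e\mapsto s^{-1}e$, $f\mapsto sf$.
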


\begin{proof}
(i)$\Rightarrow$(ii):
The pairs $a,a^*$ and $b,b^*$ have the same corresponding parameter,
which we denote by $p$.
Note that $p \neq 0$, $p \neq 1$ by Lemma \ref{lem:125}.
We first show that there exists an automorphism of $L$ that sends
$a,a^*$ to the pair \eqref{eq:exam123b}.
By Lemma \ref{lem:2.1}(iv) we may assume that 
$a,a^*$ are as in \eqref{eq:exam123}.
Note that $\gamma \neq 0$ since $\beta\gamma = 4p(1-p)$ and
$p \neq 0$, $p \neq 1$.
Define $M=\text{diag}(1,2p\gamma^{-1})$.
Then $M$ is invertible, and the automorphism $L \to L$, $y \mapsto MyM^{-1}$
sends $a,a^*$ to the pair \eqref{eq:exam123b}.
Similarly there exists an automorphism of $L$ that sends $b,b^*$ to
the pair \eqref{eq:exam123b}.
The result follows.

(ii)$\Rightarrow$(i):
Clear.
\end{proof}

\begin{lemma}   \label{lem:126}     \samepage
Let $a,a^*$ denote a pair of normalized semisimple elements of $L$,
and let $p$ denote the corresponding parameter.
Then
\begin{align}
 [a,[a,a^*]] &= 4(2p-1)a+4a^*,    \label{eq:lem124a}  \\
 [a^*,[a^*,a]] &= 4(2p-1)a^* + 4a.  \label{eq:lem124b}
\end{align}
\end{lemma}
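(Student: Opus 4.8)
The plan is to reduce to the normal form of Example~\ref{exam:123} and then carry out two short bracket computations. By Lemma~\ref{lem:2.1}(iv) there is an automorphism $\sigma$ of $L$ with $(a^*)^\sigma=h$. Automorphisms preserve the Lie bracket and, as observed in the paragraph preceding Lemma~\ref{lem:autoinner}, the form $\b{\;,\;}$; hence $a^\sigma,h$ is again a pair of normalized semisimple elements with the same corresponding parameter $p$. Because $\sigma$ is a bijective Lie-algebra homomorphism, applying it to both sides shows that \eqref{eq:lem124a} holds for $a,a^*$ if and only if the analogous relation holds for $a^\sigma,h$, and similarly for \eqref{eq:lem124b}. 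Thus I may assume $a^*=h$, and then by Example~\ref{exam:123} I may write $a=\beta e+\alpha h+\gamma f$ with $\alpha=1-2p$, $\beta\gamma=4p(1-p)$, and $\alpha^2+\beta\gamma=1$.

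For the first relation I would use \eqref{eq:refehf} to get $[a,a^*]=[a,h]=-2\beta e+2\gamma f$, and then bracket with $a$ once more to obtain
\[
 [a,[a,a^*]] = -4\alpha\beta\,e + 4\beta\gamma\,h - 4\alpha\gamma\,f.
\]
On the other side $4(2p-1)a+4a^*=-4\alpha a+4h$, whose $e,h,f$ coordinates are $-4\alpha\beta$, $4-4\alpha^2$, $-4\alpha\gamma$. The outer coordinates match directly, and the middle ones match because $4-4\alpha^2=4\beta\gamma$ by $\alpha^2+\beta\gamma=1$; this yields \eqref{eq:lem124a}.

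For the second relation, $[a^*,a]=[h,a]=2\beta e-2\gamma f$, and bracketing with $h$ gives $[a^*,[a^*,a]]=[h,\,2\beta e-2\gamma f]=4\beta e+4\gamma f$. Here $4(2p-1)a^*+4a=-4\alpha h+4a$ has coordinates $4\beta$, $-4\alpha+4\alpha=0$, $4\gamma$, so the two sides agree and \eqref{eq:lem124b} follows.

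The computations themselves are just sign bookkeeping from \eqref{eq:refehf}; the one step needing care is the covariance argument at the start, namely that establishing the relations for the transformed pair $a^\sigma,h$ genuinely returns them for $a,a^*$. This is legitimate precisely because $\sigma$ is a Lie-algebra automorphism that fixes $p$, and both sides of each relation are built only from brackets and scalar multiples of $a,a^*$. Conceptually the whole lemma is the single coordinate-free identity $[y,[y,z]]=4\,\|y\|^2 z-4\b{y,z}\,y$ for $y,z\in L$, specialized to $(y,z)=(a,a^*)$ and to $(y,z)=(a^*,a)$; if one preferred, one could instead prove this identity directly by checking it on the basis $e,h,f$ and invoking bilinearity, which would make the symmetry between the two relations manifest.
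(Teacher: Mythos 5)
Your proof is correct and follows essentially the same route as the paper: normalize the pair by an automorphism and then verify the two relations by direct bracket computation against \eqref{eq:refehf}. The only difference is that you reduce to the general form of Example~\ref{exam:123} via Lemma~\ref{lem:2.1}(iv), whereas the paper reduces further to Example~\ref{exam:123b} via Lemma~\ref{lem:aasbbs}; your version has the minor advantage of not relying on the generating hypothesis (equivalently $p\neq 0$, $p\neq 1$) that Lemma~\ref{lem:aasbbs} requires.
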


\begin{proof}
By Lemma \ref{lem:aasbbs}  we may assume that $a,a^*$ are as in
\eqref{eq:exam123b}.
Using the matrices in \eqref{eq:exam123b} we routinely
verify  \eqref{eq:lem124a}, \eqref{eq:lem124b}.
\end{proof}

\begin{lemma}    \label{lem:127}     \samepage
Fix $p \in \F$ such that $p \neq 0$, $p \neq 1$.
Let $\cal L$ denote the Lie algebra over $\F$ defined by
generators $u,v$ and relations
\begin{align}
 [u,[u,v]] &= 4(2p-1)u + 4v,     \label{eq:thm127a}\\
 [v,[v,u]] &= 4(2p-1)v + 4u.     \label{eq:thm127b}
\end{align}
Then $\cal L$ is isomorphic to $L$.
Moreover each of $u,v$ is normalized semisimple,
and $p$ is the corresponding parameter.
\end{lemma}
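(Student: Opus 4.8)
The plan is to exhibit a surjective Lie algebra homomorphism $\pi \colon {\cal L} \to L$ and then bound $\dim {\cal L}$ from above by $3 = \dim L$; together these force $\pi$ to be an isomorphism. To build $\pi$, take $a, a^*$ to be the normalized semisimple elements of $L$ from Example \ref{exam:123b}, whose corresponding parameter is $p$. By Lemma \ref{lem:126} the pair $a, a^*$ satisfies the relations \eqref{eq:lem124a}, \eqref{eq:lem124b}, which are precisely the defining relations \eqref{eq:thm127a}, \eqref{eq:thm127b} of ${\cal L}$ under $u \mapsto a$, $v \mapsto a^*$. Hence, by the universal property of a Lie algebra given by generators and relations, there is a unique homomorphism $\pi \colon {\cal L} \to L$ with $\pi(u) = a$ and $\pi(v) = a^*$. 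Because $p \neq 0$ and $p \neq 1$, Lemma \ref{lem:125} tells us $a, a^*$ generate $L$; since the image of $\pi$ is a subalgebra containing these generators, $\pi$ is onto.

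The heart of the argument is the upper bound $\dim {\cal L} \leq 3$. Put $w = [u,v]$. Rewriting the relation \eqref{eq:thm127a} gives $[u,w] = 4(2p-1)u + 4v$, and using $[v,u] = -w$ in \eqref{eq:thm127b} gives $[v,w] = -4u - 4(2p-1)v$. Thus all three brackets $[u,v]$, $[u,w]$, $[v,w]$ lie in $S := \text{Span}\{u,v,w\}$; since the bracket is bilinear and alternating, this already shows $[S,S] \subseteq S$, so $S$ is a Lie subalgebra of ${\cal L}$. As ${\cal L}$ is generated by $u, v$ and $S$ is a subalgebra containing both, we conclude ${\cal L} = S$ and therefore $\dim {\cal L} \leq 3$.

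Now $\pi$ is a surjection from a space of dimension at most $3$ onto $L$, which has dimension $3$; hence $\dim {\cal L} = 3$ and $\pi$ is an isomorphism. For the remaining claims, recall that any isomorphism of Lie algebras intertwines adjoint maps by conjugation, so it preserves the Killing form, the form $\b{\;,\;}$, and the property that $\ad$ is diagonalizable. Consequently $u = \pi^{-1}(a)$ and $v = \pi^{-1}(a^*)$ are normalized semisimple, and their corresponding parameter equals that of $a, a^*$, namely $p$. The step needing the most care is the dimension bound: one must carry out the two bracket computations correctly and be explicit that, because $u, v$ generate ${\cal L}$, any subalgebra containing them — in particular $S$ — must be all of ${\cal L}$.
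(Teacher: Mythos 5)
Your proposal is correct and follows essentially the same route as the paper: build the homomorphism $\pi\colon{\cal L}\to L$ onto the pair of Example \ref{exam:123b} via Lemma \ref{lem:126}, get surjectivity from Lemma \ref{lem:125}, and bound $\dim{\cal L}\leq 3$ by showing $u,v,[u,v]$ span a subalgebra. You merely spell out the spanning argument and the transfer of ``normalized semisimple with parameter $p$'' back through $\pi^{-1}$ in more detail than the paper does.
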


\begin{proof}
Let the pair $a,a^*$ be from Example \ref{exam:123b}.
Note that $a,a^*$ is a pair of normalized semisimple elements
of $L$, and $p$ is the corresponding parameter.
We display a Lie algebra isomorphism ${\cal L} \to L$
that sends $u \mapsto a$ and $v \mapsto a^*$.
By Lemma \ref{lem:126} the elements $a,a^*$ satisfy
\eqref{eq:lem124a}, \eqref{eq:lem124b}.
Comparing these relations with \eqref{eq:thm127a}, \eqref{eq:thm127b}
we see that there exists a Lie algebra homomorphism
$\vphi: {\cal L} \to L$ that sends $u \mapsto a$ and $v \mapsto a^*$.
We show this homomorphism is bijective.
By Lemma \ref{lem:125} the elements $a,a^*$ generate $L$,
so $\vphi$ is surjective.
Therefore $\dim {\cal L} \geq 3$.
Using \eqref{eq:thm127a} and \eqref{eq:thm127b}
we find $\cal L$ is spanned by $u,v,[u,v]$.
Therefore $\dim {\cal L} \leq 3$.
By these comments $\dim {\cal L}=3$ and $\vphi$ is bijective.
We have shown $\vphi$ is an isomorphism of Lie algebras.
The result follows.
\end{proof}

\medskip

Let $a,a^*$ denote normalized semisimple elements that generate $L$,
and let $p$ denote the corresponding parameter.
By Lemma \ref{lem:125} $p \neq 0$ and $p \neq 1$.
By Lemma \ref{lem:aasbbs} there exists an automorphism of $L$ that sends
$a,a^*$ to the pair \eqref{eq:exam123b}.
So without loss of generality we may assume $a,a^*$ is the pair 
\eqref{eq:exam123b} with $p \neq 0$, $p \neq 1$.
This assumption will be in effect until the end of Section \ref{sec:matrices}.
Thus
\begin{align}     \label{eq:aas}
  a &= 2(1-p)e+(1-2p)h+2pf,  &  a^* &= h.
\end{align}
Observe
\begin{equation}   \label{eq:aas2}
 [a,a^*] = 4(p-1)e + 4pf.
\end{equation}
By Lemma \ref{lem:125} the elements $a,a^*,[a,a^*]$ form a basis for $L$.

\medskip

\begin{lemma}   \label{lem:128}     \samepage
There exists a unique automorphism of $L$ that sends
$a \mapsto a^*$ and $a^* \mapsto a$.
Denoting this automorphism by $*$ we have
$(y^*)^*=y$ for $y \in L$.
\end{lemma}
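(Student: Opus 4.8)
The plan is to obtain existence directly from Lemma \ref{lem:aasbbs}, exploiting the symmetry of the bilinear form $\b{\;,\;}$, and then to extract uniqueness and the involutive property from the fact that $a,a^*$ generate $L$. To set up the application of Lemma \ref{lem:aasbbs}, I would take the two pairs to be $a,a^*$ and $b,b^*$ with $b=a^*$ and $b^*=a$. Since being a pair of normalized semisimple elements that generate $L$ is a condition symmetric in the two elements, the pair $a^*,a$ again consists of normalized semisimple elements that generate $L$. Thus both pairs satisfy the hypotheses of Lemma \ref{lem:aasbbs}.

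Next I would check hypothesis (i) of that lemma for these pairs, namely $\b{a,a^*}=\b{a^*,a}$. This is immediate because the Killing form, and hence its scalar multiple $\b{\;,\;}$, is symmetric (indeed $\b{y,z}=\tr(yz)/2$ as noted after \eqref{eq:bilin}). Consequently Lemma \ref{lem:aasbbs}, part (ii), supplies an automorphism of $L$ that sends $a\mapsto a^*$ and $a^*\mapsto a$. This settles existence.

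For uniqueness I would invoke Lemma \ref{lem:125}: since $a,a^*$ generate $L$, any Lie algebra homomorphism out of $L$ is determined by its values on $a$ and $a^*$. Hence there is at most one automorphism sending $a\mapsto a^*$ and $a^*\mapsto a$, and together with existence this automorphism is unique; denote it by $*$. To verify $(y^*)^*=y$ for all $y\in L$, I would consider the composite $*\circ *$, itself an automorphism of $L$. By construction $*$ sends $a\mapsto a^*$ and $a^*\mapsto a$, so $*\circ *$ fixes each of $a$ and $a^*$. Because $a,a^*$ generate $L$ and $*\circ *$ is an automorphism fixing these generators, it must fix all of $L$; that is, $*\circ *$ is the identity, which is exactly the assertion $(y^*)^*=y$.

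The argument is short and meets no serious obstacle; the only points demanding care are confirming that interchanging $a$ and $a^*$ preserves the hypotheses of Lemma \ref{lem:aasbbs} (which follows from the symmetry of both the defining conditions and the form $\b{\;,\;}$), and recognizing that the generating property from Lemma \ref{lem:125} is what simultaneously forces uniqueness and the involutive identity.
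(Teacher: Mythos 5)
Your proof is correct, but your existence argument takes a different route from the paper's. The paper derives existence from Lemma \ref{lem:127}: the defining relations \eqref{eq:thm127a}, \eqref{eq:thm127b} of the presentation are invariant under swapping the generators $u$ and $v$, so the swap induces an automorphism of $\cal L \cong L$. You instead apply Lemma \ref{lem:aasbbs} to the pairs $a,a^*$ and $a^*,a$, using the symmetry of $\b{\;,\;}$ to verify hypothesis (i); this is a legitimate and equally short path, and it is not circular since Lemma \ref{lem:aasbbs} is established before Lemma \ref{lem:128} and independently of it. The trade-off is mild: the paper's argument is purely algebraic at the level of the abstract presentation and makes the symmetry of the relations do all the work, whereas yours leans on the conjugation machinery behind Lemma \ref{lem:aasbbs} (ultimately Lemma \ref{lem:autoinner}) together with the symmetry of the Killing form. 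Your treatment of uniqueness and of the identity $(y^*)^*=y$ --- both reduced to the fact that $a,a^*$ generate $L$ --- coincides with what the paper leaves implicit, and is fine.
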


\begin{proof}
In Lemma \ref{lem:127} the relations are invariant under the map $u \mapsto v$,
$v \mapsto u$.
Therefore the automorphism exists.
This automorphism is unique since $a,a^*$ generate $L$.
The last assertion is clear.
\end{proof}

\medskip

Let $U,W$ denote the following matrices in $\text{Mat}_2(\F)$:
\begin{align}         \label{eq:defUW}
U &= \begin{pmatrix}
       1 & 1 \\
       1 & 1-p^{-1}
     \end{pmatrix},
&
W &= \begin{pmatrix}
      1-p & 0 \\
      0 & p
     \end{pmatrix}.
\end{align}
One checks $WUWU = (1-p)I$.
Define $R = WU$, so that $R^2 = (1-p)I$.
We have
\begin{align}               \label{eq:R}
 R &= \begin{pmatrix}
       1-p & 1-p \\
       p & p-1
     \end{pmatrix},
 &
 R^{-1} &= 
      \begin{pmatrix}
        1 & 1 \\
        \frac{p}{1-p} & -1
      \end{pmatrix}.
\end{align}

\medskip

\begin{lemma}    \label{lem:129}     \samepage
For $y \in L$ we have $y^* = RyR^{-1}$.
\end{lemma}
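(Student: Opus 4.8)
The plan is to use the characterization of $*$ from Lemma \ref{lem:128} as the unique automorphism of $L$ sending $a \mapsto a^*$ and $a^* \mapsto a$. First I would observe that $R$ is invertible: since $R^2 = (1-p)I$ and $p \neq 1$, the matrix $R$ cannot be singular. Hence by Lemma \ref{lem:autoinner}(i) the map $\tau : L \to L$, $y \mapsto RyR^{-1}$, is an automorphism of $L$. It therefore suffices to show that $\tau$ agrees with $*$ on the generating set $\{a,a^*\}$ of $L$ (recall that $a,a^*$ generate $L$ by Lemma \ref{lem:125}), that is, that $RaR^{-1}=a^*$ and $Ra^*R^{-1}=a$; the uniqueness in Lemma \ref{lem:128} then forces $\tau = *$, which is the claim.

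For the first relation I would verify $RaR^{-1}=a^*$ through the equivalent identity $Ra = a^*R$, using the explicit matrix forms of $a$ from \eqref{eq:aas} (equivalently \eqref{eq:exam123b}), of $a^*=h$, and of $R$ from \eqref{eq:R}. This is a routine $2\times 2$ matrix multiplication; both $Ra$ and $a^*R$ come out equal to $\begin{pmatrix} 1-p & 1-p \\ -p & 1-p \end{pmatrix}$.

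Rather than repeat a similar computation for the second relation, I would exploit $R^2=(1-p)I$. Because $R^2$ is a scalar matrix it is central, so $\tau^2(y) = R^2 y R^{-2} = y$ for all $y \in L$; that is, $\tau$ is an involution. Applying $\tau$ to both sides of the already-established identity $\tau(a)=a^*$ then yields $a = \tau^2(a) = \tau(a^*)$, which is exactly $Ra^*R^{-1}=a$. This completes the verification on generators, and hence $\tau = *$, giving $y^* = RyR^{-1}$ for all $y \in L$.

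The main (and essentially only) obstacle is the explicit check $Ra = a^*R$; everything else is formal. I expect no real difficulty there, since the matrices are small and given explicitly, but it is worth stressing that the whole argument hinges on the scalar relation $R^2=(1-p)I$, which simultaneously guarantees the invertibility of $R$ and lets us obtain the second generator relation for free from the first.
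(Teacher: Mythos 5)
Your proof is correct and follows essentially the same route as the paper: observe that $y \mapsto RyR^{-1}$ is an automorphism, verify it on the generators $a,a^*$, and invoke the uniqueness in Lemma \ref{lem:128}. The only (minor, and valid) difference is that you derive $Ra^*R^{-1}=a$ from $RaR^{-1}=a^*$ using the involution property $R^2=(1-p)I$ instead of a second direct matrix check, and your computation $Ra=a^*R=\begin{pmatrix}1-p & 1-p\\ -p & 1-p\end{pmatrix}$ is accurate.
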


\begin{proof}
Observe that the map $L \to L$, $y \mapsto RyR^{-1}$ is an automorphism
of $L$.
Using \eqref{eq:exam123b} and \eqref{eq:R}
one checks $RaR^{-1} = a^*$ and $Ra^*R^{-1}=a$.
The result follows in view of Lemma \ref{lem:128}.
\end{proof}

\medskip

Recall that $e,h,f$ is a basis for $L$.
Applying the map $*$ to this basis
we get another basis $e^*$, $h^*$, $f^*$ for $L$.
By \eqref{eq:ehf}, \eqref{eq:refehf}, \eqref{eq:R} and Lemma \ref{lem:129},
\begin{align}
 e^* &= (p-1)e + ph + \frac{p^2}{1-p} f,    \label{eq:es} 
\\
 h^* &= 2(1-p)e + (1-2p)h + 2pf = a,        \label{eq:hs}
\\
 f^* &= (1-p)e + (1-p)h + (p-1)f.           \label{eq:fs}
\end{align}

\medskip

In summary we have the following three bases for $L$:
\begin{align}    \label{eq:threebases}
 & e,h,f; &  & a,a^*,[a,a^*]; &    & e^*,h^*,f^*.
\end{align}

\medskip

We recall the notion of a transition matrix.
Let $V$ denote a nonzero finite-dimensional vector space over $\F$
and let $\{u_i\}_{i=1}^n$, $\{v_i\}_{i=1}^n$ denote two bases for $V$.
By the {\em transition matrix} from $\{u_i\}_{i=1}^n$ to
$\{v_i\}_{i=1}^n$ we mean the matrix $T \in \text{Mat}_n(\F)$
such that $v_j = \sum_{i=1}^n T_{ij}u_i$ for $j=1,2,\ldots,n$.
In this case $T^{-1}$ exists, and equals the transition matrix
from $\{v_i\}_{i=1}^n$ to $\{u_i\}_{i=1}^n$.
Let $A : V \to V$ denote an $\F$-linear transformation
and let $B \in \text{Mat}_n(\F)$ denote the matrix that represents $A$
with respect to $\{u_i\}_{i=1}^n$.
Then the matrix $T^{-1}BT$ represents $A$ with respect to $\{v_i\}_{i=1}^n$.
Let $\{w_i\}_{i=1}^n$ denote a basis for $V$ and let
$S$ denote the transition matrix from $\{v_i\}_{i=1}^n$ to
$\{w_i\}_{i=1}^n$.
Then $TS$ is the transition matrix from $\{u_i\}_{i=1}^n$
to $\{w_i\}_{i=1}^n$.

\medskip

\begin{lemma}   \label{lem:132}
For the bases \eqref{eq:threebases} the transition matrices are given as follows:
\begin{itemize}
\item[\rm (i)]
The transition matrix from the basis $e,h,f$ to the basis $a,a^*,[a,a^*]$ is
\[
 \begin{pmatrix}
  2(1-p) & 0 & 4(p-1) \\
  1-2p & 1 & 0 \\
  2p & 0 & 4p
 \end{pmatrix},
\]
and the transition matrix from the basis $a,a^*,[a,a^*]$ to the basis $e,h,f$ is
\[
 \begin{pmatrix}
  \frac{1}{4(1-p)}  & 0 & \frac{1}{4p}   \\
  \frac{2p-1}{4(1-p)}  & 1 &  \frac{2p-1}{4p}  \\
  \frac{1}{8(p-1)}  & 0 & \frac{1}{8p} 
 \end{pmatrix}.
\]
\item[\rm (ii)]
The transition matrix from the basis $e^*,h^*,f^*$ to the basis $a,a^*,[a,a^*]$ is
\[
 \begin{pmatrix}
  0 & 2(1-p) & 4(1-p) \\
  1 & 1-2p & 0 \\
  0 & 2p & -4p
 \end{pmatrix},
\]
and the transition matrix from the basis $a,a^*,[a,a^*]$ to the basis $e^*,h^*,f^*$ is
\[
 \begin{pmatrix}
  \frac{2p-1}{4(1-p)}  & 1 & \frac{2p-1}{4p}   \\
  \frac{1}{4(1-p)}  & 0 & \frac{1}{4p}   \\
  \frac{1}{8(1-p)} & 0 & - \frac{1}{8p}
 \end{pmatrix}.
\]
\item[\rm (iii)]
The transition matrix from the basis $e,h,f$ to the basis $e^*,h^*,f^*$ is
\[
 \begin{pmatrix}
  p-1 & 2(1-p) & 1-p  \\
  p & 1-2p & 1-p  \\
  \frac{p^2}{1-p} & 2p & p-1
 \end{pmatrix},
\]
and the transition matrix from the basis $e^*,h^*,f^*$ to the basis $e,h,f$ is
\[
 \begin{pmatrix}
  p-1 & 2(1-p) & 1-p \\
  p & 1-2p & 1-p \\
  \frac{p^2}{1-p} & 2p & p-1
 \end{pmatrix}.
\] 
\end{itemize}
\end{lemma}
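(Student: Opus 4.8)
The plan is to read the three ``forward'' transition matrices directly off the expansions already in hand, and then to obtain the three ``inverse'' transition matrices by a single elimination together with repeated use of the automorphism $*$ of Lemma \ref{lem:128}. I recall that a transition matrix has as its columns the coordinate vectors of the new basis written in terms of the old one.

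For part (i) the forward matrix (from $e,h,f$ to $a,a^*,[a,a^*]$) is immediate: its first column is the coefficient vector of $a$ in \eqref{eq:aas}, its second column records $a^*=h$, and its third column is the coefficient vector of $[a,a^*]$ in \eqref{eq:aas2}. For the inverse I would solve for $e,h,f$ in terms of $a,a^*,[a,a^*]$. Since $a^*=h$, only $e$ and $f$ require work: from \eqref{eq:aas} and \eqref{eq:aas2} I obtain $2a-2(1-2p)a^*=4(1-p)e+4pf$ and $[a,a^*]=-4(1-p)e+4pf$, and adding resp.\ subtracting these isolates $f$ resp.\ $e$ as the stated combinations. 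Assembling the three coordinate vectors as columns gives the displayed inverse. This elimination is the only genuine computation in the lemma.

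For part (ii) I would invoke $*$. By Lemma \ref{lem:128} this map sends $a\mapsto a^*$, $a^*\mapsto a$, hence $[a,a^*]\mapsto[a^*,a]=-[a,a^*]$, and by construction it sends $e,h,f\mapsto e^*,h^*,f^*$. Applying $*$ to the expansion of $a$ in \eqref{eq:aas} yields the expansion of $a^*$ in $e^*,h^*,f^*$; applying it to $a^*=h$ yields that of $a$; applying it to \eqref{eq:aas2} yields that of $-[a,a^*]$. Reading off these reproduces the forward matrix of (ii). For its inverse I would apply $*$ to the part (i) expansions of $e,h,f$: since $*$ interchanges the coefficients of $a$ and $a^*$ and negates that of $[a,a^*]$, these pass exactly to the expansions of $e^*,h^*,f^*$ in terms of $a,a^*,[a,a^*]$, i.e.\ to the columns of the part (ii) inverse. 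Thus no second independent inversion is needed.

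For part (iii) the forward matrix (from $e,h,f$ to $e^*,h^*,f^*$) is read straight off \eqref{eq:es}, \eqref{eq:hs}, \eqref{eq:fs}. The fact that the two displayed matrices coincide is the statement that this matrix is its own inverse, which I would deduce from $(y^*)^*=y$ in Lemma \ref{lem:128}: applying $*$ to \eqref{eq:es}, \eqref{eq:hs}, \eqref{eq:fs} expresses $e,h,f$ in terms of $e^*,h^*,f^*$ with the identical coefficients. The main, and quite mild, obstacle is bookkeeping---keeping the column-versus-row convention and the sign of $[a,a^*]$ under $*$ straight---since the involution $*$ collapses what appears to be six separate matrix computations into essentially one inversion together with symmetry.
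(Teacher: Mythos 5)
Your proposal is correct and follows essentially the same route as the paper: the three forward matrices are read off from \eqref{eq:aas}, \eqref{eq:aas2}, their images under $*$, and \eqref{eq:es}--\eqref{eq:fs}, while the inverses are obtained by inverting (the paper simply notes each second matrix is the inverse of the first; your explicit elimination for (i) and your use of the involution $*$ for (ii) and (iii) are just tidier ways of carrying out that inversion). All the displayed entries check out, including the sign $[a,a^*]\mapsto -[a,a^*]$ under $*$ and the self-inverse property in (iii) coming from $(y^*)^*=y$.
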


\begin{proof}
The first matrix of (i) follows from \eqref{eq:aas} and \eqref{eq:aas2}.
To get the first matrix of (ii), apply the map $*$ to \eqref{eq:aas} 
and \eqref{eq:aas2}.
The  first matrix of (iii) follows from \eqref{eq:es}--\eqref{eq:fs}.
Concerning the second matrix in (i)--(iii) just observe that it is
the inverse of the first matrix.
\end{proof}

\begin{lemma}   \label{lem:133}
For each pair of bases among \eqref{eq:threebases} the matrix
representing $\b{\;,\;}$ is given as follows:
\begin{align*}
 &
 \begin{array}{c|ccc}
   \b{\;,\;}  & e & h & f \\ \hline
   e & 0 & 0 & \frac{1}{2}  \\
   h & 0 & 1 & 0 \\
   f & \frac{1}{2} & 0 & 0
 \end{array}
 & &
 \begin{array}{c|ccc}
 \b{\;,\;}   & a & a^* & [a,a^*] \\ \hline
 a  & 1 & 1-2p & 0 \\
 a^* & 1-2p & 1 & 0 \\
 \,[a,a^*]\, & 0 & 0 & -16p(1-p)
 \end{array}
\\
\\
 &
 \begin{array}{c|ccc}
  \b{\;,\;}   & e^* & h^* & f^* \\ \hline
  e^* & 0 & 0 & \frac{1}{2} \\
  h^* & 0 & 1 & 0 \\
  f^* & \frac{1}{2} & 0 & 0
 \end{array}
 & &
 \begin{array}{c|ccc}
  \b{\;,\;}   & e^* & h^* & f^* \\ \hline
  e  & \frac{p^2}{2(1-p)}  & p & \frac{p-1}{2}  \\
  h  & p & 1-2p & 1-p \\
  f &  \frac{p-1}{2} & 1-p & \frac{1-p}{2}
 \end{array}
\\
\\
 &
 \begin{array}{c|ccc}
  \b{\;,\;}   & a & a^* & [a,a^*] \\ \hline
  e  & p & 0 & 2p \\
  h  & 1-2p & 1 & 0 \\
  f  & 1-p & 0 & 2(p-1)
 \end{array}
 & &
 \begin{array}{c|ccc}
  \b{\;,\;}   & a & a^* & [a,a^*] \\ \hline
  e^* & 0 & p & -2p \\
  h^* & 1 & 1-2p & 0 \\
  f^* & 0 & 1-p & 2(1-p)
 \end{array}
\end{align*}
\end{lemma}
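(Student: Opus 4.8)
The plan is to reduce every one of the six tables to a single matrix multiplication built from the Gram matrix of $e,h,f$ recorded in \eqref{eq:bilin}, together with the transition matrices already computed in Lemma \ref{lem:132}. Write $G$ for the matrix in \eqref{eq:bilin}, so that $\b{y,z}=x^{\mathsf T}Gx'$ whenever $x,x'$ are the coordinate vectors of $y,z$ in the basis $e,h,f$. The key observation is the change-of-basis rule for a bilinear form: if a ``row basis'' is obtained from $e,h,f$ by a transition matrix $P$ and a ``column basis'' by a transition matrix $Q$ (in the sense of Lemma \ref{lem:132}, so that the $j$th basis vector has the $j$th column of $P$, resp.\ $Q$, as its coordinate vector), then the matrix with $(i,j)$ entry $\b{\text{$i$th row vector},\text{$j$th column vector}}$ equals $P^{\mathsf T}GQ$. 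Thus each table is an instance of $P^{\mathsf T}GQ$ for an appropriate choice of $P,Q\in\{I,\,T,\,S\}$, where $T$ is the transition matrix from $e,h,f$ to $a,a^*,[a,a^*]$ (Lemma \ref{lem:132}(i)) and $S$ is the transition matrix from $e,h,f$ to $e^*,h^*,f^*$ (Lemma \ref{lem:132}(iii)).

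First I would dispose of the three ``diagonal'' tables. The table for $e,h,f$ against itself is $I^{\mathsf T}GI=G$, which is exactly \eqref{eq:bilin}. The table for $a,a^*,[a,a^*]$ against itself was already established in Lemma \ref{lem:124}, so I would simply cite it rather than recompute $T^{\mathsf T}GT$. For the table of $e^*,h^*,f^*$ against itself I would avoid the product $S^{\mathsf T}GS$ altogether: since $*$ is an automorphism of $L$ (Lemma \ref{lem:128}) and automorphisms preserve $\b{\;,\;}$, we have $\b{y^*,z^*}=\b{y,z}$, whence the Gram matrix of $e^*,h^*,f^*$ coincides with that of $e,h,f$, namely $G$.

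Next I would treat the three mixed tables by direct multiplication. The table with row basis $e,h,f$ and column basis $a,a^*,[a,a^*]$ is $GT$; the table with row basis $e,h,f$ and column basis $e^*,h^*,f^*$ is $GS$; and the table with row basis $e^*,h^*,f^*$ and column basis $a,a^*,[a,a^*]$ is $S^{\mathsf T}GT$. Each is a concrete $3\times 3$ product read off from \eqref{eq:bilin} and Lemma \ref{lem:132}. For the last of these I would also have a shortcut available as a cross-check: applying $\b{y^*,z^*}=\b{y,z}$ with the involutive identities $(e)^*=e^*$, $a=(a^*)^*$, and $[a,a^*]^*=[a^*,a]=-[a,a^*]$ shows that the $e^*,h^*,f^*$-versus-$a,a^*,[a,a^*]$ table is obtained from the $e,h,f$-versus-$a,a^*,[a,a^*]$ table by interchanging the $a$ and $a^*$ columns and negating the $[a,a^*]$ column.

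The only real obstacle here is bookkeeping: keeping the row/column roles straight, applying the transpose to the correct factor, and tracking the sign coming from $[a,a^*]^*=-[a,a^*]$. Since nothing conceptually subtle is involved, I would guard against arithmetic slips by using the built-in consistency checks: the three diagonal tables must be symmetric, the $a,a^*,[a,a^*]$ block must match Lemma \ref{lem:124} independently of the route taken, and the $*$-symmetry relation above must reconcile the $GT$ and $S^{\mathsf T}GT$ tables. Passing all three checks would make me confident the six displayed matrices are correct.
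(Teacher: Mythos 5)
Your proposal is correct and follows essentially the same route as the paper: the paper also takes the first table from \eqref{eq:bilin}, cites Lemma \ref{lem:124} for the second, and obtains the rest by expanding the relevant elements in the basis $e,h,f$ via \eqref{eq:aas}, \eqref{eq:aas2}, \eqref{eq:es}--\eqref{eq:fs} and using bilinearity, which is exactly your $P^{\mathsf T}GQ$ computation in coordinate-free form. Your use of the $*$-invariance of $\b{\;,\;}$ to get the $e^*,h^*,f^*$ self-table and to cross-check the last table is a mild (and valid) shortcut, already justified by the paper's earlier remark that automorphisms preserve the Killing form.
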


\begin{proof}
The first table is from \eqref{eq:bilin}
and the second table is from Lemma \ref{lem:124}.
The remaining tables are obtained using
\eqref{eq:aas}, \eqref{eq:aas2}, \eqref{eq:es}--\eqref{eq:fs}.
\end{proof}

\begin{lemma}   \label{lem:134}
Relative to each basis \eqref{eq:threebases} the matrices 
representing $\ad a$, $\ad {a^*}$ are given as follows:
\begin{itemize}
\item[\rm (i)]
Relative to the basis $a,a^*,[a,a^*]$:
\begin{align*}
 \ad a &: 
  \begin{pmatrix}
   0 & 0 & 4(2p-1) \\
   0 & 0 & 4 \\
   0 & 1 & 0
  \end{pmatrix},
&
 \ad {a^*} &:
  \begin{pmatrix}
   0 & 0 & -4 \\
   0 & 0 & 4(1-2p) \\
   -1 & 0 & 0
  \end{pmatrix}.
\end{align*}
\item[\rm (ii)]
Relative to the basis $e,h,f$:
\begin{align*}
 \ad a &: 
  \begin{pmatrix}
   2(1-2p) & 4(p-1) & 0 \\
   -2p & 0 & 2(1-p) \\
   0 & 4p & 2(2p-1)
  \end{pmatrix},
&
 \ad {a^*} &:
  \begin{pmatrix}
   2 & 0 & 0 \\
   0 & 0 & 0 \\
   0 & 0 & -2
  \end{pmatrix}.
\end{align*}
\item[\rm (iii)]
Relative to the basis $e^*,h^*,f^*$:
\begin{align*}
 \ad a &:
  \begin{pmatrix}
   2 & 0 & 0 \\
   0 & 0 & 0 \\
   0 & 0 & -2
  \end{pmatrix},
&
 \ad {a^*} &:
  \begin{pmatrix}
   2(1-2p) & 4(p-1) & 0 \\
   -2p & 0 & 2(1-p) \\
   0 & 4p & 2(2p-1)
  \end{pmatrix}.
\end{align*}
\end{itemize}
\end{lemma}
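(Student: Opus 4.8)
The plan is to handle the three bases one at a time, in each case choosing the most convenient expression for $a$ and $a^*$. For part (i), relative to the basis $a,a^*,[a,a^*]$, I would compute the images of the three basis vectors directly from the definition of the adjoint map. Applying $\ad a$ gives $[a,a]=0$, then $[a,a^*]=[a,a^*]$, and finally $[a,[a,a^*]]=4(2p-1)a+4a^*$ by \eqref{eq:lem124a} of Lemma \ref{lem:126}; reading off the coordinates of these three images in the basis $a,a^*,[a,a^*]$ yields the stated matrix for $\ad a$. For $\ad{a^*}$ the images are $[a^*,a]=-[a,a^*]$, then $[a^*,a^*]=0$, and finally $[a^*,[a,a^*]]=-[a^*,[a^*,a]]=-\bigl(4(2p-1)a^*+4a\bigr)$ by \eqref{eq:lem124b}; extracting coordinates produces the stated matrix for $\ad{a^*}$.

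For part (ii), relative to $e,h,f$, the computation of $\ad{a^*}$ is immediate because $a^*=h$ by \eqref{eq:aas}, so \eqref{eq:refehf} gives $[h,e]=2e$, $[h,h]=0$, $[h,f]=-2f$ at once. For $\ad a$ I would invoke Lemma \ref{lem:matrixady} applied to $a=2(1-p)e+(1-2p)h+2pf$, that is, with $\alpha=1-2p$, $\beta=2(1-p)$, $\gamma=2p$; substituting these values into \eqref{eq:matady} and simplifying via $-2\beta=4(p-1)$ and $-2\alpha=2(2p-1)$ reproduces the displayed matrix.

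Part (iii) I would deduce from part (ii) by applying the automorphism $*$ of Lemma \ref{lem:128}, which by construction sends the basis $e,h,f$ to $e^*,h^*,f^*$ and which interchanges $a$ and $a^*$. Concretely, if $[a,e]=B_{11}e+B_{21}h+B_{31}f$ records the first column of the matrix of $\ad a$ relative to $e,h,f$, then applying $*$ gives $[a^*,e^*]=B_{11}e^*+B_{21}h^*+B_{31}f^*$; running this over all three basis vectors shows that the matrix of $\ad{a^*}$ relative to $e^*,h^*,f^*$ coincides with the matrix of $\ad a$ relative to $e,h,f$, and symmetrically the matrix of $\ad a$ relative to $e^*,h^*,f^*$ coincides with the matrix of $\ad{a^*}$ relative to $e,h,f$. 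Reading these two matrices off from part (ii) then gives part (iii).

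The one place demanding care is the sign bookkeeping in part (i) for $\ad{a^*}$ acting on $[a,a^*]$: the relation supplied by Lemma \ref{lem:126} is phrased in terms of $[a^*,[a^*,a]]$, so one must first rewrite $[a^*,[a,a^*]]=-[a^*,[a^*,a]]$ before substituting. Beyond this, the argument is routine coordinate extraction, and the use of $*$ in part (iii) keeps the longest matrix computation from having to be redone.
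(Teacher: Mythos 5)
Your proposal is correct, and part (i) matches the paper's proof exactly (routine coordinate extraction from \eqref{eq:lem124a}, \eqref{eq:lem124b}; your sign bookkeeping $[a^*,[a,a^*]]=-[a^*,[a^*,a]]$ is right and gives the column $(-4,\,4(1-2p),\,0)^t$ as stated). For parts (ii) and (iii) you take a genuinely different route: the paper derives both from part (i) by conjugating with the transition matrices of Lemma \ref{lem:132}, i.e.\ using the rule that if $B$ represents $\ad a$ in one basis and $T$ is the transition matrix to another, then $T^{-1}BT$ represents it in the new basis. You instead compute (ii) directly --- reading $\ad{a^*}=\ad h$ off \eqref{eq:refehf} and specializing Lemma \ref{lem:matrixady} to $\alpha=1-2p$, $\beta=2(1-p)$, $\gamma=2p$ for $\ad a$ --- and then obtain (iii) from (ii) by applying the automorphism $*$, which swaps $a\leftrightarrow a^*$ and carries $e,h,f$ to $e^*,h^*,f^*$, hence swaps the two matrices without further computation. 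Your version trades the two $3\times 3$ conjugations for one appeal to an already-computed general formula plus a symmetry argument; the paper's version is more uniform in that everything flows from (i) and the transition-matrix machinery it has just set up. Both are complete and correct.
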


\begin{proof}
(i):
The matrices are routinely obtained using  \eqref{eq:lem124a}
and \eqref{eq:lem124b}.

(ii), (iii):
Follows from (i) using Lemma \ref{lem:132} and the comments
above Lemma \ref{lem:134}.
\end{proof}

\medskip

By an {\em antiautomorphism} of $L$ we mean an isomorphism
of $\F$-vector spaces $\sigma : L \to L$ such that
$[y,z]^\sigma = [z^\sigma,y^\sigma]$ for $y,z \in L$.

\medskip

\begin{example}      \label{exam:anti}   \samepage
Each of the following maps is an antiautomorphism of $L$.
\begin{itemize}
\item[(i)]
The map $L \to L$, $y \mapsto -y$.
\item[(ii)]
The map $L \to L$, $y \mapsto y^t$.
\end{itemize}
\end{example}

\medskip

Consider two maps $\sigma:L\to L$ and $\tau:L\to L$, 
each of which is an automorphism or an antiautomorphism.
If exactly one is an antiautomorphism, then the composition 
$\sigma\tau$ is an antiautomorphism.
Otherwise $\sigma\tau$ is an automorphism.

\medskip

\begin{lemma}     \label{lem:anti}   \samepage
The following hold.
\begin{itemize}
\item[\rm (i)]
Let $M$ denote an invertible matrix in $\text{\rm Mat}_2(\F)$.
Then the map $L \to L$, $y \mapsto My^tM^{-1}$
is an antiautomorphism of $L$.
\item[\rm (ii)]
Let $\sigma$ denote an antiautomorphism of $L$.
Then there exists an invertible $M \in \text{\rm Mat}_2(\F)$
such that $y^\sigma = My^tM^{-1}$ for $y \in L$.
\end{itemize}
\end{lemma}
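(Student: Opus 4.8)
The plan is to reduce both parts to the already-established automorphism version, Lemma \ref{lem:autoinner}, by composing with the transpose antiautomorphism of Example \ref{exam:anti}(ii). The only structural facts I need beyond that lemma are that transpose maps $L$ to $L$ (it preserves the trace) and is an involution, together with the composition rule stated just before the lemma: a composite of two maps, each of which is an automorphism or an antiautomorphism, is an antiautomorphism precisely when exactly one of the two factors is an antiautomorphism.

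For part (i), I would first note that $y \mapsto y^t$ sends $L$ to $L$ and is an antiautomorphism by Example \ref{exam:anti}(ii), while $y \mapsto MyM^{-1}$ is an automorphism of $L$ by Lemma \ref{lem:autoinner}(i). The map in question, $y \mapsto My^tM^{-1}$, is the composite of these two (first transpose, then conjugate by $M$). Since exactly one of the two factors is an antiautomorphism, the composition rule immediately gives that $y \mapsto My^tM^{-1}$ is an antiautomorphism. No direct verification of the defining identity $[y,z]^\sigma = [z^\sigma,y^\sigma]$ is then required, although it could alternatively be checked by hand in one line using $(yz)^t = z^ty^t$.

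For part (ii), let $t$ denote the transpose antiautomorphism and form the composite of $\sigma$ with $t$, namely the map $\rho$ sending $y \mapsto (y^t)^\sigma$. As a composite of two antiautomorphisms, $\rho$ is an automorphism of $L$ by the composition rule. Lemma \ref{lem:autoinner}(ii) then supplies an invertible $M \in \Mat_2(\F)$ with $(y^t)^\sigma = MyM^{-1}$ for all $y \in L$. Finally, since transpose is a bijection of $L$ onto itself, I may substitute $y \mapsto y^t$ throughout; using $(y^t)^t = y$ this yields $y^\sigma = My^tM^{-1}$ for all $y \in L$, which is the assertion.

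Since all the heavy lifting is already packaged in Lemma \ref{lem:autoinner} and in the composition rule, there is essentially no computational obstacle here. The one point requiring care is the bookkeeping of composition order and the substitution $y \mapsto y^t$ in part (ii): I must confirm that $\rho$ is genuinely an automorphism before invoking Lemma \ref{lem:autoinner}(ii), and that transpose is applied to (and lands in) $L$ so that the substitution is legitimate. These are the only places where the argument could go astray.
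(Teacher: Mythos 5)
Your proposal is correct and follows essentially the same route as the paper: part (i) is obtained by composing the transpose antiautomorphism of Example \ref{exam:anti}(ii) with the conjugation automorphism of Lemma \ref{lem:autoinner}(i) via the composition rule, and part (ii) by observing that $y \mapsto (y^t)^\sigma$ is an automorphism, invoking Lemma \ref{lem:autoinner}(ii), and then substituting $y \mapsto y^t$. The paper's proof is just a terser version of exactly this argument.
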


\begin{proof}
(i): Follows from Lemma \ref{lem:autoinner}(i), Example \ref{exam:anti}(ii),
and the comment below Example \ref{exam:anti}.

(ii):
The map $L \to L$, $y \mapsto (y^t)^\sigma$ is 
an automorphism of $L$.
So by Lemma \ref{lem:autoinner}(ii) there exists an invertible $M \in \text{Mat}_2(\F)$
such that $(y^t)^\sigma = MyM^{-1}$ for $y \in L$.
The result follows.
\end{proof}

\begin{lemma}     \label{lem:autibilin}   \samepage
Let $\sigma$ denote an antiautomorphism of $L$.
Then $\b{y,z} = \b{y^\sigma,z^\sigma}$ for $y,z \in L$
\end{lemma}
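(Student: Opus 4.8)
The plan is to reduce everything to the identity $\b{y,z}=\tr(yz)/2$ recorded just after \eqref{eq:bilin}, together with the explicit form of an antiautomorphism supplied by Lemma \ref{lem:anti}(ii). First I would apply Lemma \ref{lem:anti}(ii) to obtain an invertible $M\in\Mat_2(\F)$ with $y^\sigma=My^tM^{-1}$ for all $y\in L$. This turns the abstract hypothesis on $\sigma$ into a concrete matrix formula that I can feed into the trace.

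Next I would compute $\b{y^\sigma,z^\sigma}$ directly. Writing $\b{y^\sigma,z^\sigma}=\tr(y^\sigma z^\sigma)/2$ and substituting, the product telescopes as $y^\sigma z^\sigma=My^tM^{-1}Mz^tM^{-1}=My^tz^tM^{-1}$, and cyclicity of the trace cancels the outer conjugation by $M$, leaving $\b{y^\sigma,z^\sigma}=\tr(y^tz^t)/2$.

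The last step is to identify $\tr(y^tz^t)$ with $\tr(yz)$. Since $y^tz^t=(zy)^t$ and transposition preserves trace, $\tr(y^tz^t)=\tr(zy)=\tr(yz)$; hence $\b{y^\sigma,z^\sigma}=\tr(yz)/2=\b{y,z}$, as desired. There is no real obstacle here: the genuine content is only that $\b{\;,\;}$ is simultaneously invariant under conjugation (via cyclicity of trace) and under transposition, and the proof just assembles these two facts. As a cross-check one could instead argue structurally: the transpose map of Example \ref{exam:anti}(ii) is an antiautomorphism preserving $\b{\;,\;}$ by the transpose computation above, so $\sigma$ composed with transpose is an automorphism, and the already established automorphism-invariance of $\b{\;,\;}$ then yields the claim.
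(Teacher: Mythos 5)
Your proof is correct, but it takes a different route from the paper. The paper's own argument is the one you sketch only as a ``cross-check'' at the end, except with negation in place of transposition: it defines $\tau:u\mapsto -u^\sigma$, notes that $\tau$ is an automorphism (being the composition of the two antiautomorphisms $\sigma$ and $u\mapsto -u$ from Example \ref{exam:anti}(i)), invokes the already-established automorphism-invariance $\b{y,z}=\b{y^\tau,z^\tau}$, and then lets bilinearity absorb the two minus signs: $\b{-y^\sigma,-z^\sigma}=\b{y^\sigma,z^\sigma}$. That version is very light: it needs neither the classification of antiautomorphisms in Lemma \ref{lem:anti}(ii) nor the trace formula $\b{y,z}=\tr(yz)/2$. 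Your main argument instead pulls in Lemma \ref{lem:anti}(ii) to write $y^\sigma=My^tM^{-1}$ and then computes $\tr(My^tz^tM^{-1})=\tr(y^tz^t)=\tr((zy)^t)=\tr(yz)$ by cyclicity and transpose-invariance of the trace; every step is sound, and it has the minor virtue of making completely explicit which two invariances of $\b{\;,\;}$ are being combined. The trade-off is that you lean on a structural classification result where the paper gets by with a one-line composition trick.
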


\begin{proof}
Define $\tau : L \to L$ such that $u^\tau = - u^\sigma$ for $u \in L$.
Then $\tau$ is an automorphism of $L$.
We have $\b{y,z} = \b{y^\tau,z^\tau}$,
so $\b{y,z} = \b{-y^\sigma,-z^\sigma} = \b{y^\sigma,z^\sigma}$.
\end{proof}

\begin{lemma}   \label{lem:135}     \samepage
There exists a unique antiautomorphism of $L$ that fixes each of
$a$, $a^*$.
Denoting this antiautomorphism by $\dagger$ we have
$(y^\dagger)^\dagger = y$ for $y \in L$.
\end{lemma}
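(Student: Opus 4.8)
The plan is to construct $\dagger$ by combining the negation antiautomorphism $y \mapsto -y$ of Example \ref{exam:anti}(i) with a suitable automorphism of $L$, thereby reducing the problem to a statement about automorphisms that is already within reach of Lemma \ref{lem:aasbbs}. Recall from the comment below Example \ref{exam:anti} that the composite of an automorphism and an antiautomorphism is an antiautomorphism, and that the composite of two antiautomorphisms is an automorphism. The key observation is that an antiautomorphism $\sigma$ fixes $a$ and $a^*$ precisely when the associated automorphism $\tau$ given by $y^\tau = -y^\sigma$ sends $a \mapsto -a$ and $a^* \mapsto -a^*$.

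For existence, first I would check that $-a,-a^*$ is again a pair of normalized semisimple elements of $L$ that generate $L$: indeed $\|-a\|^2 = \|a\|^2 = 1$ and $\ad(-a) = -\ad a$ is diagonalizable, so $-a$ (and likewise $-a^*$) is normalized semisimple; and $-a,-a^*$ span the same subspace as $a,a^*$, hence generate $L$. Since $\b{-a,-a^*} = \b{a,a^*}$, Lemma \ref{lem:aasbbs} supplies an automorphism $\tau$ of $L$ with $a^\tau = -a$ and $(a^*)^\tau = -a^*$. Defining $y^\dagger = -y^\tau$ then gives an antiautomorphism of $L$ (the composite of $\tau$ with negation) satisfying $a^\dagger = a$ and $(a^*)^\dagger = a^*$.

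For uniqueness I would argue that any antiautomorphism $\sigma$ fixing $a,a^*$ is already determined on the basis $a,a^*,[a,a^*]$ of $L$ (which is a basis by Lemma \ref{lem:125}): indeed $[a,a^*]^\sigma = [(a^*)^\sigma, a^\sigma] = [a^*,a] = -[a,a^*]$, so $\sigma$ agrees with $\dagger$ on each basis vector and hence $\sigma = \dagger$. Finally, for the involution claim, note that $\dagger \circ \dagger$ is a composite of two antiautomorphisms and so is an automorphism of $L$; it fixes $a$ and $a^*$, and an automorphism fixing the generators $a,a^*$ must be the identity. Hence $(y^\dagger)^\dagger = y$ for all $y \in L$.

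The main obstacle is the existence step, and specifically the realization that it should be routed through Lemma \ref{lem:aasbbs} rather than attacked by an explicit matrix computation: the genuine content is verifying that $-a,-a^*$ meet the hypotheses of that lemma (normalized, semisimple, generating, with matching Killing inner product), after which the negation trick converts the resulting automorphism into the desired antiautomorphism.
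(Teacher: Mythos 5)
Your proof is correct, but the existence step takes a genuinely different route from the paper. The paper simply exhibits the explicit map $y \mapsto Wy^tW^{-1}$ with $W=\mathrm{diag}(1-p,\,p)$ from \eqref{eq:defUW}, notes it is an antiautomorphism by Lemma \ref{lem:anti}(i), and checks by direct matrix computation that it fixes $a$ and $a^*$. You instead verify that $-a,-a^*$ is again a pair of normalized semisimple generators with $\b{-a,-a^*}=\b{a,a^*}$, invoke Lemma \ref{lem:aasbbs} to get an automorphism $\tau$ sending $a\mapsto -a$, $a^*\mapsto -a^*$, and compose with the negation antiautomorphism of Example \ref{exam:anti}(i). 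Both arguments are valid; yours avoids any matrix computation and shows the existence is forced by the abstract conjugacy statement, while the paper's choice has the practical advantage that the explicit formula $y^\dagger=Wy^tW^{-1}$ is exactly what is needed immediately afterwards in Lemmas \ref{lem:136} and \ref{lem:139} (with your construction one would still prove Lemma \ref{lem:136} by checking that $y\mapsto Wy^tW^{-1}$ fixes $a,a^*$ and appealing to uniqueness, so the computation is only deferred, not eliminated). Your uniqueness argument is also slightly different and arguably cleaner: rather than composing $\dagger$ with the inverse of a second antiautomorphism to get an automorphism fixing the generators, you observe that any antiautomorphism fixing $a,a^*$ has forced values on the basis $a,a^*,[a,a^*]$ (namely $a$, $a^*$, $-[a,a^*]$) and is therefore determined as a linear map. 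The involution step matches the paper's.
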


\begin{proof}
Concerning  existence,
observe that the map $y \mapsto Wy^tW^{-1}$ is an antiautomorphism of $L$
that fixes each of $a,a^*$, where $W$ is from \eqref{eq:defUW}.
We have shown $\dagger$ exists.
We now show that $\dagger$ is unique.
Let $\dagger'$ denote an antiautomorphism of $L$ that fixes each of 
$a,a^*$. We show that $\dagger'=\dagger$.
The composition $\dagger{\dagger'}^{-1}$ is an automorphism of $L$
that fixes each of $a,a^*$, so it must be the identity map since
$a,a^*$ generate $L$. So $\dagger=\dagger'$.
Concerning the last assertion,
observe the map $y \mapsto (y^\dagger)^\dagger$ is an automorphism
of $L$ that fixes each of $a,a^*$, and hence the identity map.
\end{proof}

\begin{lemma}   \label{lem:136}     \samepage
For $y \in L$ we have $y^{\dagger} = W y^t W^{-1}$,
where $W$ is from \eqref{eq:defUW}.
\end{lemma}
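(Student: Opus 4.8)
The plan is to derive this immediately from the uniqueness clause of Lemma~\ref{lem:135}. Since $\dagger$ is the \emph{unique} antiautomorphism of $L$ fixing each of $a,a^*$, it suffices to exhibit the map $\Phi : L \to L$, $y \mapsto Wy^tW^{-1}$ as an antiautomorphism of $L$ that fixes each of $a,a^*$; uniqueness then forces $\Phi = \dagger$, which is exactly the assertion.

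First I would note that $W$ is invertible: from \eqref{eq:defUW} we have $W = \text{diag}(1-p,p)$, and this is nonsingular because $p \neq 0$ and $p \neq 1$. Hence by Lemma~\ref{lem:anti}(i) the map $\Phi$ is an antiautomorphism of $L$, so the only remaining point is the fixed-point condition.

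Next I would verify that $\Phi$ fixes $a$ and $a^*$. For $a^* = h = \text{diag}(1,-1)$ this is immediate, since $a^*$ is symmetric, so $(a^*)^t = a^*$, and diagonal matrices commute, giving $W(a^*)^tW^{-1} = Wa^*W^{-1} = a^*$. For $a$ I would use the explicit matrix form \eqref{eq:exam123b} together with $W^{-1} = \text{diag}((1-p)^{-1},p^{-1})$: a direct computation of $Wa^tW^{-1}$ returns $a$. These two short matrix checks are the only computations the argument requires.

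Finally, with $\Phi$ shown to be an antiautomorphism fixing each of $a,a^*$, the uniqueness in Lemma~\ref{lem:135} yields $\Phi = \dagger$, that is, $y^\dagger = Wy^tW^{-1}$ for all $y \in L$. There is no genuine obstacle here; in fact the verification that $y \mapsto Wy^tW^{-1}$ fixes $a,a^*$ already appeared in the existence half of the proof of Lemma~\ref{lem:135}, so the present lemma mainly serves to record the closed form of $\dagger$ explicitly. The one point to be careful about is that the fixed-point check is carried out for the specific normalized pair \eqref{eq:exam123b} to which we reduced at the start of the section.
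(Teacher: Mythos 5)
Your proposal is correct and follows essentially the same route as the paper: exhibit $y \mapsto Wy^tW^{-1}$ as an antiautomorphism fixing $a$ and $a^*$ (which was already noted in the existence half of Lemma~\ref{lem:135}), then invoke the uniqueness clause of that lemma. The explicit matrix verification of $Wa^tW^{-1}=a$ for the pair \eqref{eq:exam123b} is the only detail the paper leaves implicit, and your computation of it is accurate.
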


\begin{proof}
The map $y \mapsto Wy^tW^{-1}$ is an antiautomorphism that fixes
each of $a,a^*$. By Lemma \ref{lem:135} such an antiautomorphism
is unique. The result follows.
\end{proof}

\begin{lemma}   \label{lem:137}     \samepage
The maps $*$ and $\dagger$ commute.
\end{lemma}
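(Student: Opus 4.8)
The plan is to show that the two composite maps $y \mapsto (y^*)^\dagger$ and $y \mapsto (y^\dagger)^*$ coincide on $L$, following the same strategy used in Lemmas \ref{lem:128} and \ref{lem:135}: reduce the question to an evaluation on the generating pair $a,a^*$. First I would record the types of the relevant maps. By Lemma \ref{lem:128} the map $*$ is an automorphism of $L$, while by Lemma \ref{lem:135} the map $\dagger$ is an antiautomorphism. By the composition rule stated just before Lemma \ref{lem:anti}, the composite of an automorphism and an antiautomorphism (in either order) is again an antiautomorphism. Hence both $y \mapsto (y^*)^\dagger$ and $y \mapsto (y^\dagger)^*$ are antiautomorphisms of $L$.

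Next I would evaluate these two antiautomorphisms on $a$ and on $a^*$, using that $*$ interchanges $a$ and $a^*$ (Lemma \ref{lem:128}) and that $\dagger$ fixes each of $a,a^*$ (Lemma \ref{lem:135}). On the element $a$: applying $*$ first gives $a^*$, which $\dagger$ fixes, so $(a^*)^\dagger = a^*$; applying $\dagger$ first fixes $a$, and then $*$ sends it to $a^*$, so $(a^\dagger)^* = a^*$. On the element $a^*$: applying $*$ first gives $a$, which $\dagger$ fixes, yielding $a$; applying $\dagger$ first fixes $a^*$, and then $*$ sends it to $a$, again yielding $a$. Thus the two antiautomorphisms agree on each of $a,a^*$.

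Finally I would conclude by a generation argument. Since $a,a^*$ generate $L$ (Lemma \ref{lem:125}), it suffices to know that two antiautomorphisms agreeing on a generating set must coincide. This holds because the set of $y \in L$ on which two antiautomorphisms $\sigma,\tau$ agree is a Lie subalgebra: if $y^\sigma = y^\tau$ and $z^\sigma = z^\tau$, then $[y,z]^\sigma = [z^\sigma,y^\sigma] = [z^\tau,y^\tau] = [y,z]^\tau$, the bracket reversal being harmless precisely because both maps reverse brackets in the same way. This subalgebra contains $a,a^*$, hence equals $L$, so the two composite maps coincide and $*$, $\dagger$ commute.

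I do not expect a serious obstacle; the one point requiring care is the last one, namely that ``agreement on generators forces equality'' holds for antiautomorphisms just as for automorphisms, and the bracket computation above confirms that the agreement locus is bracket-closed. As an alternative, purely computational route, one could instead invoke the explicit forms $y^* = RyR^{-1}$ (Lemma \ref{lem:129}) and $y^\dagger = Wy^tW^{-1}$ (Lemma \ref{lem:136}); then the commutativity $(y^*)^\dagger = (y^\dagger)^*$ reduces to checking that the matrix $W^{-1}R^{-1}W(R^t)^{-1}$ is scalar, which follows from $R^t = UW$ (using that $U$ is symmetric and $W$ diagonal) together with the relation $UWUW = (1-p)I$, and the fact that the centralizer of $L$ in $\Mat_2(\F)$ consists of scalar matrices. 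I would nonetheless favor the conceptual argument, since it matches the style of the surrounding lemmas and avoids matrix bookkeeping.
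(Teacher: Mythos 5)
Your proposal is correct and follows essentially the same route as the paper: the paper's one-line proof simply checks that the two composites agree on $a$ and $a^*$, leaving implicit exactly the points you spell out (that both composites are antiautomorphisms and that antiautomorphisms agreeing on the generating pair $a,a^*$ must coincide). Your filling-in of those details, including the bracket-closure of the agreement locus, is accurate.
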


\begin{proof}
For $y=a$ and $y=a^*$ we have
$(y^*)^{\dagger} = (y^{\dagger})^*$.
\end{proof}

\begin{lemma}  \label{lem:139}     \samepage
The antiautomorphism $\dagger$ acts on $e,h,f$ and
$e^*,h^*,f^*$ in the following way:
\[
 \begin{array}{c|ccc|ccc}
  y & e & h & f & e^* & h^* & f^* \\ \hline
  y^\dagger & \frac{p}{1-p} f & h &  \frac{1-p}{p} e 
  & \frac{p}{1-p} f^* & h^* & \frac{1-p}{p}  e^*
\end{array}
\]
\end{lemma}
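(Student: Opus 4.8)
The plan is to obtain every entry of the table by direct computation from the closed formula $y^\dagger = W y^t W^{-1}$ established in Lemma \ref{lem:136}, where $W = \text{diag}(1-p,p)$ is the diagonal matrix from \eqref{eq:defUW}. The key observation that makes this routine is that conjugation by a diagonal matrix is especially transparent on $2\times 2$ matrices: it fixes the diagonal entries and merely rescales each off-diagonal entry. Combined with the fact that transposition interchanges $e$ and $f$ while fixing $h$, this reduces the whole computation to reading off two scalar factors.

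First I would handle the basis $e,h,f$. Since $e^t = f$, we have $e^\dagger = W e^t W^{-1} = W f W^{-1}$; because $W = \text{diag}(1-p,p)$ is diagonal, this conjugation multiplies the nonzero $(2,1)$-entry of $f$ by $p/(1-p)$, so $e^\dagger = \frac{p}{1-p} f$. Dually, $f^t = e$ gives $f^\dagger = W e W^{-1} = \frac{1-p}{p} e$, the nonzero $(1,2)$-entry being scaled by $(1-p)/p$. Finally $h^t = h$, and $W$ commutes with the diagonal matrix $h$, so $h^\dagger = W h W^{-1} = h$. This establishes the first three columns of the table.

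For the starred basis $e^*,h^*,f^*$ I would avoid recomputing and instead invoke Lemma \ref{lem:137}, which asserts that $*$ and $\dagger$ commute. Since the starred basis is by definition the image of $e,h,f$ under the automorphism $*$ (Lemma \ref{lem:128}), commutativity yields $(e^*)^\dagger = (e^\dagger)^* = \left(\frac{p}{1-p} f\right)^* = \frac{p}{1-p} f^*$, and likewise $(h^*)^\dagger = h^*$ and $(f^*)^\dagger = \frac{1-p}{p} e^*$, completing the table. There is no genuine obstacle here: the only care needed is to perform the diagonal conjugation with the correct ratio, and to note that $*$ is $\F$-linear so that the scalar factors $p/(1-p)$ and $(1-p)/p$ pass through it unchanged. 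One could alternatively derive the starred columns directly from $y^* = R y R^{-1}$ (Lemma \ref{lem:129}) together with the formula for $\dagger$, but the commutation argument is shorter and less error-prone.
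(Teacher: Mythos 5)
Your proof is correct and follows essentially the same route as the paper, which computes $e^\dagger = W e^t W^{-1} = \frac{p}{1-p}f$ and declares the other cases similar. Your use of Lemma \ref{lem:137} (the commutativity of $*$ and $\dagger$) to transfer the first three columns to the starred basis is a clean way to organize "the other cases," but it is not a genuinely different argument.
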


\begin{proof}
$e^\dagger = W e^t W^{-1} = \frac{p}{1-p}f$.
The other cases are similar.
\end{proof}

\section{Krawtchouk polynomials and the Lie algebra $\sltwo(\F)$}
\label{sec:sl2Krawt}

\indent
We continue to discuss the Lie algebra $L=\sltwo(\F)$.
In this section we consider how $L$ is related to Krawtchouk polynomials. 
We start by constructing a certain $L$-module.
Let $y,z$ denote commuting indeterminates.
Let $\F[y,z]$ denote the $\F$-algebra consisting of the polynomials
in $y,z$ that have all coefficients in $\F$.
We abbreviate ${\cal A} = \F[y,z]$.
The $\F$-vector space $\cal A$ has a basis 
\[
  y^r z^s, \qquad\qquad  r,s=0,1,2,\ldots
\]
For an integer $n \geq 0$ let $\text{Hom}_n({\cal A})$ denote the $n$th
homogeneous component of $\cal A$:
\[
  \text{Hom}_n({\cal A}) = \text{Span}\,\{y^{n-i}z^i\}_{i=0}^n.
\]
We abbreviate $H_n = \text{Hom}_n({\cal A})$.
Observe
that $\dim H_n = n+1$ and that
${\cal A}=\sum_{n=0}^\infty H_n$ (direct sum).
Moreover 
 $H_nH_m = H_{n+m}$ for $m,n \geq 0$.
We have 
 $H_0 = \F 1$
and  $H_1 = \F y + \F z$.

\medskip

For a nonzero vector space $V$ over $\F$,
let $\text{End}(V)$ denote the $\F$-algebra consisting of all $\F$-linear
transformations from $V$ to $V$.
Let $\mathfrak{gl}(V)$ denote the Lie algebra consisting of
the $\F$-vector space $\text{End}(V)$ together with Lie bracket
$[\vphi,\phi] = \vphi \phi -\phi \vphi$.

\medskip

A {\em derivation} of $\cal A$ is an element 
$\partial \in \mathfrak{gl}({\cal A})$ such that
$\partial(bc) = \partial(b)c+b \partial(c)$ for $b,c \in {\cal A}$.
Let $\text{Der}({\cal A})$ denote the set of all derivations of $\cal A$.
One checks that $\text{Der}({\cal A})$ is a Lie subalgebra of 
$\mathfrak{gl}({\cal A})$.
Observe that for $\partial \in \text{Der}({\cal A})$,
\begin{align*}
 \partial(1) &=0,  \\
 \partial(b^n) &= n b^{n-1} \partial(b),   
           & &b \in {\cal A}, \qquad\quad n=1,2,\ldots \\
 \partial(y^rz^s) &= r y^{r-1}z^s \partial(y) + s y^r z^{s-1} \partial(z),
   & & r,s=0,1,2,\ldots
\end{align*}
By these comments $\partial$ is determined by $\partial(y)$ 
and $\partial(z)$.
Therefore $\partial$ is determined by its action on
$\text{Hom}_1({\cal A})$.
We emphasize 
\begin{equation}    \label{eq:star}
 \text{$\partial = 0$ if and only if $\partial$ vanishes on $\text{Hom}_1({\cal A})$}.
\end{equation}
The following lemma asserts that any $\F$-linear transformation
$\text{Hom}_1({\cal A}) \to {\cal A}$
can be uniquely extended to $\text{Der}({\cal A})$.

\medskip

\begin{lemma}   \label{lem:140}     \samepage
For an $\F$-linear transformation $\vphi : \text{\rm Hom}_1({\cal A}) \to {\cal A}$ 
there exists a unique
$\partial=\partial_{\vphi} \in \text{\rm Der}({\cal A})$ such that
the restriction of $\partial$ on $\text{\rm Hom}_1({\cal A})$ coincides with $\vphi$.
\end{lemma}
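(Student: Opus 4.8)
The plan is to dispose of uniqueness first, and then to construct the required derivation explicitly.

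Uniqueness is almost immediate from the discussion preceding the lemma. Suppose $\partial,\partial'\in\text{Der}({\cal A})$ both restrict to $\vphi$ on $H_1=\text{Hom}_1({\cal A})$. Their difference $\partial-\partial'$ is again a derivation, since $\text{Der}({\cal A})$ is a Lie subalgebra, and in particular an $\F$-subspace, of $\mathfrak{gl}({\cal A})$. This difference vanishes on $H_1$, so by \eqref{eq:star} we conclude $\partial-\partial'=0$; that is, $\partial=\partial'$. The real content of the lemma is therefore existence.

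For existence I would introduce the formal partial derivative operators $\partial_y,\partial_z\in\mathfrak{gl}({\cal A})$, defined on the monomial basis by $\partial_y(y^rz^s)=r\,y^{r-1}z^s$ and $\partial_z(y^rz^s)=s\,y^rz^{s-1}$ and extended $\F$-linearly. A routine check on monomials shows each of $\partial_y,\partial_z$ satisfies the Leibniz rule $D(bc)=D(b)c+bD(c)$, so $\partial_y,\partial_z\in\text{Der}({\cal A})$. The formula for a derivation recorded in the text (namely $\partial(y^rz^s)=r\,y^{r-1}z^s\,\partial(y)+s\,y^rz^{s-1}\,\partial(z)$) makes clear what the candidate must be, and I would accordingly define
\[
 \partial \;=\; \vphi(y)\,\partial_y + \vphi(z)\,\partial_z,
\]
where $\vphi(y)\,\partial_y$ denotes the map $b\mapsto \vphi(y)\,\partial_y(b)$, and similarly for the second term.

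It remains to check that $\partial$ is a derivation restricting to $\vphi$ on $H_1$. For the first point I would use two closure facts: the sum of two derivations is a derivation, and for any fixed $g\in{\cal A}$ and any $D\in\text{Der}({\cal A})$ the map $b\mapsto g\,D(b)$ is again a derivation. The latter is the one place where commutativity of ${\cal A}$ enters, since $g\,D(bc)=g\,D(b)\,c+g\,b\,D(c)=g\,D(b)\,c+b\,g\,D(c)$, which is exactly the Leibniz rule for $gD$. Hence $\partial\in\text{Der}({\cal A})$. Finally, evaluating on $y$ and $z$ gives $\partial(y)=\vphi(y)\cdot 1+\vphi(z)\cdot 0=\vphi(y)$ and likewise $\partial(z)=\vphi(z)$, so by $\F$-linearity $\partial$ and $\vphi$ agree on all of $H_1$; setting $\partial_\vphi=\partial$ completes the construction. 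I do not expect any genuine obstacle here: the argument is essentially bookkeeping, and the only point that needs a moment's care is the closure statement that $gD$ is a derivation, which quietly relies on the commutativity of ${\cal A}$.
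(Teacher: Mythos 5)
Your proof is correct and follows essentially the same route as the paper: uniqueness via \eqref{eq:star}, and existence by exhibiting the derivation determined on monomials by $\partial(y^rz^s)=r\,y^{r-1}z^s\,\vphi(y)+s\,y^rz^{s-1}\,\vphi(z)$, which is exactly the map you build as $\vphi(y)\,\partial_y+\vphi(z)\,\partial_z$. Your only departure is organizational: you modularize the Leibniz check through the closure facts that $\text{Der}({\cal A})$ is an $\F$-subspace and that $gD$ is a derivation for $g\in{\cal A}$ (using commutativity), where the paper verifies the Leibniz rule directly on the explicit formula.
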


\begin{proof}
There exists an element $\partial \in \mathfrak{gl}({\cal A})$ such that
\begin{align*}
 \partial(y^rz^s) &= r y^{r-1}z^s \vphi(y) + sy^rz^{s-1}\vphi(z),
   & & r,s=0,1,2,\ldots
\end{align*}
One checks $\partial \in \text{Der}({\cal A})$.
By construction $\partial(y)=\vphi(y)$ and $\partial(z)=\vphi(z)$,
so the restriction of $\partial$ on $\text{Hom}_1({\cal A})$ coincides with $\vphi$.
We have shown the existence of $\partial$.
The uniqueness follows from \eqref{eq:star}.
\end{proof}

\medskip

The Lie algebra $L$ acts by left multiplication on the vector
space ${\F}^{\, 2}$ (column vectors).
Recall that $\text{Hom}_1({\cal A})$ has basis $y,z$.
Consider the vector space isomorphism 
$\text{Hom}_1({\cal A}) \to {\F}^{\, 2}$ that sends
$y \mapsto (1,0)^t$ and $z \mapsto (0,1)^t$.
This isomorphism induces an $L$-module structure on 
$\text{Hom}_1({\cal A})$ such that
\begin{equation}      \label{eq:ehfaction}
 \begin{matrix}
  e.y = 0,  & \qquad & h.y= y,   &  \qquad &  f.y= z,        \\
  e.z = y, & \qquad  & \;\;\;h.z = -z, & \qquad &  f.z = 0.
 \end{matrix}
\end{equation}

\medskip

\begin{lemma}    \label{lem:141}     \samepage
The map $L \to \text{\rm Der}({\cal A})$, $\vphi \mapsto \partial_{\vphi}$
is an injective homomorphism of Lie algebras.
\end{lemma}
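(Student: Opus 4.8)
The plan is to verify the three required properties---$\F$-linearity, preservation of the Lie bracket, and injectivity---by in each case reducing to the behavior of the relevant derivation on $\text{Hom}_1({\cal A})$ and invoking the uniqueness clause of Lemma \ref{lem:140} (equivalently \eqref{eq:star}). Throughout, the key structural facts I would lean on are that $\text{Der}({\cal A})$ is a Lie subalgebra of $\mathfrak{gl}({\cal A})$, that the $L$-action on $\text{Hom}_1({\cal A})$ defined in \eqref{eq:ehfaction} is $\F$-bilinear, and---most importantly---that this action carries $\text{Hom}_1({\cal A})$ into itself.

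First I would establish $\F$-linearity. Given $\vphi,\phi \in L$ and $c \in \F$, the maps $\partial_{\vphi}+\partial_{\phi}$ and $c\,\partial_{\vphi}$ again lie in $\text{Der}({\cal A})$, since the latter is an $\F$-subspace of $\mathfrak{gl}({\cal A})$. Their restrictions to $\text{Hom}_1({\cal A})$ coincide with the module actions of $\vphi+\phi$ and $c\vphi$ respectively, because the $L$-action is linear in the acting element. By the uniqueness in Lemma \ref{lem:140} these derivations must equal $\partial_{\vphi+\phi}$ and $\partial_{c\vphi}$, giving $\F$-linearity of $\vphi \mapsto \partial_{\vphi}$.

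The heart of the argument is the bracket identity $\partial_{[\vphi,\phi]} = [\partial_{\vphi},\partial_{\phi}]$. Since $\text{Der}({\cal A})$ is a Lie subalgebra, the right-hand side is again a derivation, so by \eqref{eq:star} it suffices to check that both sides agree on $\text{Hom}_1({\cal A})$. For $w \in \text{Hom}_1({\cal A})$ the left side evaluates to the module action $[\vphi,\phi].w = \vphi.(\phi.w) - \phi.(\vphi.w)$. For the right side, the crucial observation is that the action \eqref{eq:ehfaction} sends $\text{Hom}_1({\cal A})$ into $\text{Hom}_1({\cal A})$; hence $\partial_{\phi}(w) = \phi.w$ still lies in $\text{Hom}_1({\cal A})$, so applying $\partial_{\vphi}$ to it again reduces to the module action and yields $\vphi.(\phi.w)$. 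Subtracting the symmetric term gives exactly $\vphi.(\phi.w) - \phi.(\vphi.w)$, matching the left side. I expect this invariance of $\text{Hom}_1({\cal A})$ under the $L$-action to be the one subtle point: it is precisely what lets a composition of derivations collapse to a composition of module actions on $\text{Hom}_1({\cal A})$, and without it the two sides could not be compared term by term.

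Finally, for injectivity, I would suppose $\partial_{\vphi}=0$. Then by \eqref{eq:star} the transformation $\vphi$ vanishes on $\text{Hom}_1({\cal A})$. Under the identification of $\text{Hom}_1({\cal A})$ with $\F^{\,2}$ used to define the module structure, $\vphi \in L \subseteq \text{Mat}_2(\F)$ acts by left multiplication, and a matrix annihilating all of $\F^{\,2}$ is the zero matrix. Hence $\vphi = 0$, so the map is injective, completing the proof.
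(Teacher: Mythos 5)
Your proof is correct and follows essentially the same route as the paper: both sides of $\partial_{[\vphi,\phi]}=[\partial_\vphi,\partial_\phi]$ are derivations agreeing on $\text{Hom}_1({\cal A})$, so \eqref{eq:star} applies, and injectivity comes from the faithfulness of the action on $\text{Hom}_1({\cal A})\cong\F^{\,2}$. You simply supply more detail than the paper (which asserts the agreement on $\text{Hom}_1({\cal A})$ and calls injectivity ``clear by construction''), and your observation that the $L$-action preserves $\text{Hom}_1({\cal A})$ is exactly the point that makes the terse argument work.
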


\begin{proof}
We first show that the map is a homomorphism of Lie algebras.
It suffices to show 
\begin{align}   \label{eq:lem141}
 \partial_{[\vphi,\phi]} &= [\partial_\vphi,\partial_\phi]
   & & (\vphi,\phi \in L).
\end{align}
In the equation \eqref{eq:lem141} both sides are contained in 
$\text{Der}({\cal A})$ and they agree on $\text{Hom}_1({\cal A})$.
So this equation holds in view of \eqref{eq:star}.
Therefore the map is a homomorphism of Lie algebras.
The injectivity is clear by construction.
\end{proof}

\medskip

We have proven the following theorem.

\medskip

\begin{theorem}    \label{thm:142}     \samepage
The algebra $\cal A$ has an $L$-module structure such that each element of $L$
acts on $\cal A$ as a derivation and \eqref{eq:ehfaction} holds.
\end{theorem}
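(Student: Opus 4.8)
The plan is to recognize that an $L$-module structure on ${\cal A}$ is nothing more than a Lie algebra homomorphism $L \to \mathfrak{gl}({\cal A})$, and that such a homomorphism has essentially already been constructed. Concretely, for $\vphi \in L$ and $b \in {\cal A}$ I would define the action $\vphi.b = \partial_\vphi(b)$, where $\partial_\vphi \in \text{Der}({\cal A})$ is the unique derivation extending the action of $\vphi$ on $\text{Hom}_1({\cal A})$ guaranteed by Lemma \ref{lem:140}.

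First I would check that this prescription yields a genuine $L$-module. By Lemma \ref{lem:141} the map $\vphi \mapsto \partial_\vphi$ is a homomorphism of Lie algebras from $L$ into $\text{Der}({\cal A})$. Since $\text{Der}({\cal A})$ is a Lie subalgebra of $\mathfrak{gl}({\cal A})$, composing with the inclusion produces a Lie algebra homomorphism $L \to \mathfrak{gl}({\cal A})$; this is precisely the data of an $L$-module structure on ${\cal A}$, so the module axioms hold automatically.

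Next I would observe that the remaining two assertions are immediate from the construction. Each element $\vphi \in L$ acts as $\partial_\vphi$, which lies in $\text{Der}({\cal A})$ by Lemma \ref{lem:140} and therefore acts as a derivation. Finally, because $\partial_\vphi$ restricts to $\vphi$ on $\text{Hom}_1({\cal A})$, and the action of $L$ on $\text{Hom}_1({\cal A})$ is the one specified in \eqref{eq:ehfaction}, the displayed action \eqref{eq:ehfaction} is recovered.

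There is no real obstacle here: all of the work has been front-loaded into Lemmas \ref{lem:140} and \ref{lem:141}, which is why the text announces the theorem as already proven. The only point demanding a moment's care is the standard identification of $L$-module structures with Lie algebra homomorphisms into $\mathfrak{gl}({\cal A})$, after which the theorem is simply a repackaging of the injective homomorphism from Lemma \ref{lem:141}.
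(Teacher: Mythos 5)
Your proposal is correct and matches the paper exactly: the paper introduces Theorem \ref{thm:142} with the words ``We have proven the following theorem,'' meaning the proof is precisely the combination of Lemma \ref{lem:140} (each $\vphi\in L$ extends uniquely to a derivation $\partial_\vphi$) and Lemma \ref{lem:141} ($\vphi\mapsto\partial_\vphi$ is a Lie algebra homomorphism), together with the $L$-module structure on $\text{Hom}_1({\cal A})$ defined by \eqref{eq:ehfaction}. Your identification of an $L$-module structure with a Lie algebra homomorphism into $\mathfrak{gl}({\cal A})$ is the same repackaging the authors intend.
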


\bigskip

For the rest of this section we fix a feasible integer $N$.
We consider the subspace $\text{Hom}_N({\cal A})$ of $\cal A$.
This subspace has a basis $\{y^{N-i}z^i\}_{i=0}^N$.

\medskip

\begin{lemma}    \label{lem:143pre}     \samepage
The elements $e,h,f$ act on the basis
$\{y^{N-i}z^i\}_{i=0}^N$ as follows: 
\begin{align*}
e.(y^{N-i}z^i) &= i y^{N-i+1}z^{i-1}  &  & (1 \leq i \leq N), 
                & e.y^N &= 0,     \\
h.(y^{N-i}z^i) &= (N-2i) y^{N-i}z^i  & & (0 \leq i \leq N),      \\
f.(y^{N-i}z^i) &= (N-i)y^{N-i-1}z^{i+1}  & & (0 \leq i \leq N-1),
                &  f.z^N &= 0.
\end{align*}
\end{lemma}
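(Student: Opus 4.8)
The plan is to compute the action of each of $e$, $h$, $f$ on the basis vector $y^{N-i}z^i$ directly, using the fact established in Theorem \ref{thm:142} that each of these elements acts on $\cal A$ as a derivation, together with the explicit action on $\text{Hom}_1({\cal A})$ recorded in \eqref{eq:ehfaction}. The key observation is that once I know $\partial(y)$ and $\partial(z)$ for a derivation $\partial$, the formula $\partial(y^rz^s) = r y^{r-1}z^s \partial(y) + s y^r z^{s-1}\partial(z)$ (noted in the discussion preceding Lemma \ref{lem:140}) determines everything.

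First I would treat $h$. Since $h.y = y$ and $h.z = -z$, the derivation rule gives $h.(y^{N-i}z^i) = (N-i)y^{N-i-1}z^i \cdot y + i\, y^{N-i}z^{i-1}\cdot(-z) = (N-i)y^{N-i}z^i - i\,y^{N-i}z^i = (N-2i)y^{N-i}z^i$, valid for all $0 \leq i \leq N$. Next I would handle $e$. Since $e.y = 0$ and $e.z = y$, only the $z$-differentiation contributes, yielding $e.(y^{N-i}z^i) = i\,y^{N-i}z^{i-1}\cdot y = i\,y^{N-i+1}z^{i-1}$; when $i=0$ the factor $i$ forces $e.y^N = 0$, matching the stated boundary case. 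Finally for $f$, since $f.y = z$ and $f.z = 0$, only the $y$-differentiation survives, giving $f.(y^{N-i}z^i) = (N-i)y^{N-i-1}z^i\cdot z = (N-i)y^{N-i-1}z^{i+1}$; when $i=N$ the factor $N-i$ vanishes, so $f.z^N = 0$, again matching the boundary case.

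There is essentially no obstacle here: each computation is a single application of the Leibniz/derivation rule combined with \eqref{eq:ehfaction}, and the only point requiring a moment of care is checking that the boundary cases ($e.y^N=0$ and $f.z^N=0$) are automatically produced by the vanishing of the numerical coefficients $i$ and $N-i$ respectively, rather than needing a separate argument. Thus the proof amounts to verifying three short formulas, and I would present it simply by invoking the derivation rule and substituting the values from \eqref{eq:ehfaction}.
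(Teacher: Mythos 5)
Your proposal is correct and follows exactly the paper's own argument: apply the derivation (Leibniz) rule to $y^{N-i}z^i$ and substitute the values of $e,h,f$ on $y,z$ from \eqref{eq:ehfaction}, with the boundary cases handled by the vanishing numerical coefficients. No issues.
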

 
\begin{proof}
The element $e$ acts on $\cal A$ as a derivation,
so for $i=0,1,\ldots,N$,
\[
 e.(y^{N-i}z^i) 
  = (N-i)y^{N-i-1}z^i (e.y) + i y^{N-i}z^{i-1} (e.z).
\]
In this equation the right-hand side is equal to  $i y^{N-i+1}z^{i-1}$ 
in view of \eqref{eq:ehfaction}.
The other cases are similar.
\end{proof}

\medskip
 
The following lemma is a reformulation of Lemma \ref{lem:143pre}.
 
\medskip 

\begin{lemma}    \label{lem:143}     \samepage
With respect to the basis  $\{y^{N-i}z^i\}_{i=0}^N$ the
matrices representing $e,h,f$ are
\[
 e : \begin{pmatrix}
       0 & 1 & & & & \text{\bf 0} \\
         & 0 & 2 \\
         &   & \cdot & \cdot \\
         &   &       & \cdot & \cdot \\
         &   &       &       & 0 & N \\
       \text{\bf 0} & & &      &   & 0
     \end{pmatrix},
 \qquad\qquad
 f : \begin{pmatrix}
       0 & & & & & \text{\bf 0} \\
       N & 0 \\
         & \cdot & \cdot \\
         &     & \cdot & \cdot \\
         &     &       & 2 & 0 \\
       \text{\bf 0} & & & & 1 & 0
      \end{pmatrix},
 \]
 \[
  h : \text{\rm diag}(N,N-2,\ldots,-N).
\]
\end{lemma}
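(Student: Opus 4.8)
The plan is to read off each matrix directly from the action of $e,h,f$ on the basis $\{y^{N-i}z^i\}_{i=0}^N$ as computed in Lemma \ref{lem:143pre}, using the definition of ``$B$ represents $A$'' stated just before Lemma \ref{lem:matrixady}. Recall that a matrix $B$ represents a linear transformation $A$ with respect to an ordered basis $\{u_i\}$ precisely when $A u_j = \sum_i B_{ij} u_i$; that is, the $j$th column of $B$ records the coefficients when $A u_j$ is expanded in the basis. So the entire task reduces to rewriting the three displayed formulas of Lemma \ref{lem:143pre} as the columns of a matrix, with appropriate care about the indexing convention.

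First I would fix the indexing: since the basis is $\{y^{N-i}z^i\}_{i=0}^N$, there are $N+1$ vectors indexed $i=0,1,\ldots,N$, so the matrices are $(N+1)\times(N+1)$ and it is cleanest to label rows and columns by $0,1,\ldots,N$ as well. Then I would handle each of $e,h,f$ in turn. For $h$, Lemma \ref{lem:143pre} gives $h.(y^{N-i}z^i)=(N-2i)y^{N-i}z^i$, so $h$ is diagonal with $(i,i)$-entry $N-2i$, yielding $\mathrm{diag}(N,N-2,\ldots,-N)$. For $e$, the formula $e.(y^{N-i}z^i)=i\,y^{N-i+1}z^{i-1}$ says the image of the $i$th basis vector is $i$ times the $(i-1)$st basis vector (with $e.y^N=0$ the $i=0$ case); hence the only nonzero entry in column $i$ is in row $i-1$, equal to $i$. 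This produces the superdiagonal $1,2,\ldots,N$ with all other entries zero. For $f$, the formula $f.(y^{N-i}z^i)=(N-i)y^{N-i-1}z^{i+1}$ says the image of the $i$th basis vector is $N-i$ times the $(i+1)$st basis vector (with $f.z^N=0$ the $i=N$ case); hence the only nonzero entry in column $i$ is in row $i+1$, equal to $N-i$. This produces the subdiagonal $N,N-1,\ldots,1$ with all other entries zero.

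Finally I would check that the resulting subdiagonal and superdiagonal entries match the displayed matrices. In the displayed matrix for $e$ the superdiagonal entries read $1,2,\ldots,N$ from top-left to bottom-right, which agrees with the entry $i$ in row $i-1$, column $i$ as $i$ runs over $1,\ldots,N$. In the displayed matrix for $f$ the subdiagonal entries read $N,N-1,\ldots,2,1$ from top to bottom, which agrees with the entry $N-i$ in row $i+1$, column $i$ as $i$ runs over $0,1,\ldots,N-1$. Thus all three displayed matrices are exactly the matrices whose columns are given by Lemma \ref{lem:143pre}, and the lemma follows.

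I do not anticipate any genuine obstacle here, since Lemma \ref{lem:143pre} already contains all the content and this lemma is explicitly described in the excerpt as ``a reformulation'' of it. The only thing requiring care is the bookkeeping distinction between the column index (which basis vector we apply the operator to) and the row index (which basis vector appears in the output), together with the off-by-one shifts: $e$ lowers the $z$-degree and so shifts the index down by one, while $f$ raises it and shifts the index up by one. Keeping the convention ``column $j$ holds the coordinates of $A u_j$'' straight is the whole game, and once that is pinned down the verification is immediate.
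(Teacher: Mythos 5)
Your proof is correct and matches the paper exactly: the paper offers no separate argument, simply stating that Lemma \ref{lem:143} is a reformulation of Lemma \ref{lem:143pre}, and your column-by-column reading of the action (with the correct convention $Au_j=\sum_i B_{ij}u_i$ and the correct index shifts for $e$ and $f$) is precisely that reformulation. Nothing is missing.
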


\medskip

Lemmas \ref{lem:143pre} or \ref{lem:143} shows that 
$\text{Hom}_N({\cal A})$ is an $L$-submodule of $\cal A$. 
One checks that this $L$-module is irreducible.
If $\text{Char}(\F)=0$ then
up to isomorphism $\text{Hom}_N({\cal A})$ is the unique irreducible 
$L$-module of dimension $N+1$ \cite[Theorem 7.2]{Humph}.
More generally we have the following.

\medskip

\begin{lemma}    \label{lem:unique}   \samepage
Let $V$ denote an irreducible $L$-module with dimension $N+1$.
Then the following are equivalent.
\begin{itemize}
\item[\rm (i)]
The $L$-module $V$ is isomorphic to $\text{\rm Hom}_N({\cal A})$.
\item[\rm (ii)]
$V$ has a basis $\{v_i\}_{i=0}^N$ such that 
$h.v_i = (N-2i)v_i$ for $i=0,1,\ldots,N$ and both
\[
  e.v_0 = 0, \qquad\qquad  f.v_N = 0.
\]
\end{itemize}
\end{lemma}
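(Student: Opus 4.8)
The plan is to prove the two implications separately, with (i)$\Rightarrow$(ii) a short transport-of-structure argument and (ii)$\Rightarrow$(i) carrying the real content.

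For (i)$\Rightarrow$(ii) I would take an isomorphism $\phi:\text{Hom}_N({\cal A})\to V$ of $L$-modules and set $v_i=\phi(y^{N-i}z^i)$ for $i=0,1,\ldots,N$. Since $\phi$ is a bijection it sends the basis $\{y^{N-i}z^i\}_{i=0}^N$ to a basis $\{v_i\}_{i=0}^N$ of $V$. Because $\phi$ intertwines the $L$-action, the formulas of Lemma \ref{lem:143pre} transport directly: $h.v_i=(N-2i)v_i$, while $e.v_0=\phi(e.y^N)=0$ and $f.v_N=\phi(f.z^N)=0$. This is exactly (ii).

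For (ii)$\Rightarrow$(i) I would first record that the scalars $N-2i$ $(0\le i\le N)$ are mutually distinct: $N-2i=N-2j$ forces $2(i-j)=0$ in $\F$, and since $\text{Char}(\F)\neq 2$ and $N$ is feasible the difference $i-j$ must vanish. Hence each $h$-eigenspace of $V$ is the line $\F v_i$. Next I would invoke \eqref{eq:refehf}: from $[h,e]=2e$ one computes $h.(e.v_i)=(N-2i+2)e.v_i$, so $e.v_i$ lies in the eigenspace for eigenvalue $N-2(i-1)$, which for $1\le i\le N$ is $\F v_{i-1}$; thus $e.v_i=\beta_i v_{i-1}$ for some $\beta_i\in\F$. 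Symmetrically $f.v_i=\gamma_i v_{i+1}$ for some $\gamma_i\in\F$ when $0\le i\le N-1$. Together with the boundary conditions $e.v_0=0$ and $f.v_N=0$, this says $e$ and $f$ act by the expected index shifts.

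The heart of the argument is then the relation $[e,f]=h$. Applying both sides to $v_i$ gives $\gamma_i\beta_{i+1}-\beta_i\gamma_{i-1}=N-2i$, and telescoping with the boundary values yields $\beta_i\gamma_{i-1}=i(N-i+1)$ for $1\le i\le N$. Since $N$ is feasible, the factors $i$ and $N-i+1$ lie in $\{1,\ldots,N\}$ and hence are nonzero in $\F$, so every $\beta_i$ $(1\le i\le N)$ and every $\gamma_j$ $(0\le j\le N-1)$ is nonzero. I would then set $c_0=1$ and $c_i=i\,c_{i-1}/\beta_i$ recursively, and define the linear map $\psi:\text{Hom}_N({\cal A})\to V$ by $\psi(y^{N-i}z^i)=c_i v_i$. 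Comparing with Lemma \ref{lem:143pre}, the map $\psi$ intertwines $h$ and $e$ by construction, and it intertwines $f$ precisely when $(N-i)c_{i+1}=c_i\gamma_i$; substituting the recursion for $c_{i+1}$ reduces this to $\beta_{i+1}\gamma_i=(i+1)(N-i)$, which is the identity just derived. Hence $\psi$ is an $L$-module homomorphism, and since all $c_i$ are nonzero it carries a basis to a basis, so it is an isomorphism. I expect the main obstacle to be exactly this consistency check: one must verify that the single recursion fixing the $c_i$ from the $e$-action automatically makes the $f$-action intertwine too, and feasibility of $N$ is what keeps the eigenvalues $N-2i$ distinct and the products $i(N-i+1)$ invertible in $\F$, so that the rescaling does not degenerate.
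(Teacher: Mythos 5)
Your proposal is correct and follows essentially the same route as the paper: both directions use the weight-space argument from $[h,e]=2e$, $[h,f]=-2f$, derive the recursion $\beta_{i}\gamma_{i-1}=i(N-i+1)$ from $[e,f]=h$ with the boundary conditions, and use feasibility of $N$ to conclude these products are nonzero. Your explicit rescaling constants $c_i$ are just the paper's step ``renormalizing the basis $\{v_i\}_{i=0}^N$ we may assume $\alpha_i=i$ and $\beta_i=N-i+1$'' written out in detail.
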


\begin{proof}
(i)$\Rightarrow$(ii):
Immediate from Lemma \ref{lem:143pre}.

(ii)$\Rightarrow$(i):
For $i=0,1,\ldots,N$ the vector $v_i$ is an
eigenvector for $h$ with eigenvalue $N-2i$.
Note that $\{N-2i\}_{i=0}^N$ are mutually distinct.
Pick any integer $i$ $(1 \leq i \leq N)$.
Using $[h,e]=2e$ we find $e.v_i \in \F v_{i-1}$,
and using $[h,f]=-2f$ we find $f.v_{i-1} \in \F v_i$.
Define $\alpha_i,\beta_i \in \F$ such that 
$e.v_i = \alpha_i v_{i-1}$ and $f.v_{i-1} = \beta_i v_i$.
Define $\gamma_i = \alpha_i \beta_i$.
For $i=0,1,\ldots,N$ apply each side of $[e,f]=h$ to $v_i$ and
find $\gamma_{i+1}-\gamma_i=N-2i$, where $\gamma_0=0$ and
$\gamma_{N+1}=0$.
Solving this recursion we obtain $\gamma_i=i(N-i+1)$ for $i=0,1,\ldots,N$.
Renormalizing the basis $\{v_i\}_{i=0}^N$ we may assume
$\alpha_i=i$ and $\beta_i=N-i+1$ for $1 \leq i \leq N$.
Now with respect to $\{v_i\}_{i=0}^N$ the matrices representing $e,f,h$
match those from Lemma \ref{lem:143}.
Therefore there exists an isomorphism of $L$-modules
$V \to \text{Hom}_N({\cal A})$ that sends
$v_i \mapsto y^{N-i}z^i$ for $i=0,1,\ldots,N$.
\end{proof}

\medskip

For the rest of this section we abbreviate $V=\text{Hom}_N({\cal A})$.
For $i=0,1,\ldots,N$ define $V_i = \F y^{N-i}z^i$.
We have $\dim V_i=1$ and 
\begin{equation}    \label{eq:directsum1}
   V = \sum_{i=0}^N V_i \qquad\qquad \text{ (direct sum)}.
\end{equation}
For $i=0,1,\ldots,N$ the space $V_i$ is the eigenspace of $h$ 
associated with the eigenvalue $N-2i$.
We call $V_i$ the {\em $h$-weight space} for the eigenvalue $N-2i$.
We call \eqref{eq:directsum1} the {\em $h$-weight space decomposition} of $V$.

\medskip

Recall the basis $e^*,h^*,f^*$ for $ L$ from \eqref{eq:es}--\eqref{eq:fs}.
We now describe the action of $e^*,h^*, f^*$ on the $L$-module $V$.
We will use the matrix $R$ from \eqref{eq:R}.
Recall that $y,z$ form a basis for $\text{Hom}_1({\cal A})$.
Define
\begin{align}    \label{eq:yszs}
 y^* &= (1-p)y + pz,  &  z^* &= (1-p)y + (p-1)z.
\end{align}
Then $y^*,z^*$ form a basis for $\text{Hom}_1({\cal A})$,
and $R$ is the transition matrix from $y,z$ to $y^*,z^*$.
We have
\begin{align}    \label{eq:yz}
 y &= y^* + \frac{p}{1-p}z^*, &
 z &= y^* - z^*.
\end{align}

\medskip

\begin{lemma}   \label{lem:144}     \samepage
The elements $e^*,h^*,f^*$ act on $\text{\rm Hom}_1({\cal A})$ as follows:
\begin{equation}                      \label{eq:eshsfsaction}
 \begin{matrix}
   e^*.y^* = 0, & \qquad &  h^*.y^* = y^*,  & \qquad &  f^*.y^* = z^*,
\\
 e^*.z^* = y^*,  & \qquad & \;\; h^*.z^* = -z^*, & \qquad & f^*.z^* = 0.
 \end{matrix}               
\end{equation}
\end{lemma}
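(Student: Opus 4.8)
The plan is to exploit the identification of $\text{Hom}_1(\mathcal{A})$ with $\F^{\,2}$ under which $L$ acts by left matrix multiplication, together with the fact that the single matrix $R$ of \eqref{eq:R} plays two roles at once: it is the transition matrix from $y,z$ to $y^*,z^*$, and by Lemma \ref{lem:129} it conjugates $L$ so as to implement the map $*$. First I would record that, as column vectors in $\F^{\,2}$, we have $y^* = Ry$ and $z^* = Rz$; this is just the defining property of the transition matrix $R$ applied to the basis $y,z$ of $\text{Hom}_1(\mathcal{A})$, read off from \eqref{eq:yszs}, since the two columns of $R$ are exactly the coordinate vectors of $y^*$ and $z^*$.

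The key step is then a one-line transport of structure. For any $\vphi \in L$ and any $v \in \text{Hom}_1(\mathcal{A})$ the module action is matrix-times-vector, $\vphi.v = \vphi v$. Combining $\vphi^* = R\vphi R^{-1}$ (Lemma \ref{lem:129}) with $v^* = Rv$ gives
\[
 \vphi^*.v^* = (R\vphi R^{-1})(Rv) = R(\vphi v) = R(\vphi.v).
\]
Specializing $\vphi$ to each of $e,h,f$ and $v$ to each of $y,z$, the right-hand side becomes $R$ applied to the known quantities $e.y,\,h.y,\,\ldots,\,f.z$ supplied by \eqref{eq:ehfaction}. Since $Ry = y^*$ and $Rz = z^*$, each such image is immediately one of $0$, $y^*$, or $z^*$, so the table in \eqref{eq:eshsfsaction} drops out with no further computation.

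Carrying this out, I expect $e^*.y^* = R(e.y) = 0$, $\ h^*.y^* = R(h.y) = Ry = y^*$, $\ f^*.y^* = R(f.y) = Rz = z^*$, and symmetrically $e^*.z^* = R(e.z) = Ry = y^*$, $\ h^*.z^* = -Rz = -z^*$, $\ f^*.z^* = 0$, which is exactly \eqref{eq:eshsfsaction}. The only point requiring care, and the only place the argument could go wrong, is the bookkeeping that the two occurrences of $R$ (as transition matrix and as conjugator) are genuinely the same matrix and that the action on $\text{Hom}_1(\mathcal{A})$ is left multiplication rather than its transpose; once these conventions are pinned down the verification is mechanical. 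A fully computational alternative, should the transport-of-structure identity feel unmotivated, is to expand $e^*,h^*,f^*$ via \eqref{eq:es}--\eqref{eq:fs} (equivalently, form $ReR^{-1}$, $RhR^{-1}$, $RfR^{-1}$) and apply the resulting matrices to the explicit column vectors $y^*,z^*$ from \eqref{eq:yszs}, but the conjugation argument avoids all of this arithmetic.
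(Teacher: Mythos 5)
Your argument is correct, but it is organized differently from the paper's. The paper's proof is the ``fully computational alternative'' you mention at the end: it expands $e^*,h^*,f^*$ in terms of $e,h,f$ via \eqref{eq:es}--\eqref{eq:fs}, expands $y^*,z^*$ in terms of $y,z$ via \eqref{eq:yszs}, and evaluates using \eqref{eq:ehfaction}. Your route instead isolates the single identity $\vphi^*.v^* = R(\vphi.v)$, which follows from associativity of matrix multiplication once one checks that the conjugator $R$ of Lemma \ref{lem:129} coincides, under the identification $\text{Hom}_1({\cal A})\cong\F^{\,2}$, with the transition matrix from $y,z$ to $y^*,z^*$ --- and that check is correct: with the paper's convention $v_j=\sum_i T_{ij}u_i$, the columns of $R$ in \eqref{eq:R} are precisely the coordinate vectors $(1-p,p)^t$ and $(1-p,p-1)^t$ of $y^*$ and $z^*$ from \eqref{eq:yszs}. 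Your six specializations then all come out right. What your approach buys is the elimination of arithmetic and an explanation of \emph{why} the table \eqref{eq:eshsfsaction} is the exact starred copy of \eqref{eq:ehfaction}: the same matrix $R$ implements $*$ on $L$ and carries the basis $y,z$ to $y^*,z^*$, so the whole module structure transports at once. What the paper's version buys is independence from the coincidence of the two roles of $R$; it needs only the already-recorded formulas \eqref{eq:es}--\eqref{eq:fs} (which were themselves obtained by conjugating by $R$, so the two proofs rest on the same underlying computation, packaged at different levels).
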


\begin{proof}
Use \eqref{eq:es}--\eqref{eq:fs}, \eqref{eq:ehfaction}, and \eqref{eq:yszs}.
\end{proof}

\medskip

By construction $\{{y^*}^{N-i} {z^*}^i\}_{i=0}^N$ form a basis for $V$.

\medskip

\begin{lemma}    \label{lem:145pre}
The elements $e^*,h^*,f^*$ act on the basis $\{{y^*}^{N-i}{z^*}^i\}_{i=0}^N$
as follows:
\begin{align*}
 e^*.({y^*}^{N-i}{z^*}^i) &= i {y^*}^{N-i+1}{z^*}^{i-1} & & (1 \leq i \leq N),
 & e^*.{y^*}^N &=0,
\\
 h^*.({y^*}^{N-i}{z^*}^i) &= (N-2i) {y^*}^{N-i}{z^*}^{i} & & (0 \leq i \leq N),
\\
 f^*.({y^*}^{N-i}{z^*}^i) &= (N-i) {y^*}^{N-i-1}{z^*}^{i+1}  & & (0 \leq i \leq N-1),
 & f^*.{z^*}^N &=0.
\end{align*}
\end{lemma}

\begin{proof}
Similar to the proof of Lemma \ref{lem:143pre}
using \eqref{eq:eshsfsaction}.
\end{proof}

\medskip

The following lemma is a reformulation of Lemma \ref{lem:145pre}.

\medskip

\begin{lemma}   \label{lem:145}     \samepage
With respect to the basis $\{{y^*}^{N-i} {z^*}^i\}_{i=0}^N$
the matrices representing $e^*,h^*,f^*$ are
\[
 e^* : \begin{pmatrix}
       0 & 1 & & & & \text{\bf 0} \\
         & 0 & 2 \\
         &   & \cdot & \cdot \\
         &   &       & \cdot & \cdot \\
         &   &       &       & 0 & N \\
       \text{\bf 0} & & &      &   & 0
     \end{pmatrix},
 \qquad\qquad
 f^* : \begin{pmatrix}
       0 & & & & & \text{\bf 0} \\
       N & 0 \\
         & \cdot & \cdot \\
         &     & \cdot & \cdot \\
         &     &       & 2 & 0 \\
       \text{\bf 0} & & & & 1 & 0
      \end{pmatrix},
 \]
 \[
  h^* : \text{\rm diag}(N,N-2,\ldots,-N).
\]
\end{lemma}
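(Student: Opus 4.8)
The plan is to follow the template established by Lemma \ref{lem:143}, which gives the matrices representing $e,h,f$ with respect to $\{y^{N-i}z^i\}_{i=0}^N$. The key observation is that Lemma \ref{lem:144} shows $e^*,h^*,f^*$ act on the basis $y^*,z^*$ of $\text{Hom}_1({\cal A})$ by exactly the same formulas \eqref{eq:eshsfsaction} that $e,h,f$ act on the basis $y,z$ via \eqref{eq:ehfaction}. Since the matrices in the target statement are assembled purely from the action on the degree-one generators (each element of $L$ acts as a derivation, by Theorem \ref{thm:142}), the structural identity between \eqref{eq:ehfaction} and \eqref{eq:eshsfsaction} should force the matrices for $e^*,h^*,f^*$ relative to $\{{y^*}^{N-i}{z^*}^i\}_{i=0}^N$ to coincide with those for $e,h,f$ relative to $\{y^{N-i}z^i\}_{i=0}^N$.

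Concretely, I would simply invoke Lemma \ref{lem:145pre}, which is the genuine computation: it records how $e^*,h^*,f^*$ send each basis vector ${y^*}^{N-i}{z^*}^i$ to a scalar multiple of an adjacent basis vector. The three displayed equations there specify, for each of $e^*,h^*,f^*$, precisely which entry of each column is nonzero and what its value is. Reading off these coefficients into matrix form is then purely mechanical: the relation $e^*.({y^*}^{N-i}{z^*}^i)=i\,{y^*}^{N-i+1}{z^*}^{i-1}$ places the entry $i$ in the superdiagonal position, giving the upper bidiagonal matrix with entries $1,2,\ldots,N$; the relation for $f^*$ places $N-i$ in the subdiagonal, giving the lower bidiagonal matrix with entries $N,N-1,\ldots,1$ read top to bottom; and the weight relation $h^*.({y^*}^{N-i}{z^*}^i)=(N-2i)\,{y^*}^{N-i}{z^*}^i$ yields the diagonal matrix $\text{diag}(N,N-2,\ldots,-N)$.

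There is essentially no obstacle here, since the lemma is explicitly flagged as a reformulation of Lemma \ref{lem:145pre}, mirroring how Lemma \ref{lem:143} reformulates Lemma \ref{lem:143pre}. The only point requiring care is bookkeeping on the indexing convention for the matrix representation: one must confirm that the superdiagonal entries appear in the order $1,2,\ldots,N$ and the subdiagonal entries in the order $N,N-1,\ldots,1$ as written, rather than reversed. This follows directly from the definition that $B$ represents an element when $B.u_j=\sum_i B_{ij}u_i$, together with the index ranges $1\le i\le N$ in the $e^*$-formula and $0\le i\le N-1$ in the $f^*$-formula. Thus the entire proof reduces to the single sentence that the matrix forms are read off from Lemma \ref{lem:145pre}, and I would write it as such.
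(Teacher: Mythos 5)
Your proposal is correct and matches the paper exactly: the paper offers no separate proof, merely the remark that Lemma \ref{lem:145} is a reformulation of Lemma \ref{lem:145pre}, which is precisely the reduction you carry out. Your additional care about the indexing convention ($B_{ij}$ defined by $B.u_j=\sum_i B_{ij}u_i$) is a sound check but not something the paper spells out.
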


\medskip

For $i=0,1,\ldots,N$ define $V^*_i = \F {y^*}^{N-i}{z^*}^i$.
We have $\dim V^*_i=1$ and 
\begin{equation}    \label{eq:directsum2}
 V = \sum_{i=0}^N V^*_i  \qquad\qquad  \text{(direct sum)}.
\end{equation}
For $i=0,1,\ldots,N$ the space $V^*_i$ is the eigenspace of $h^*$ 
associated with the eigenvalue $N-2i$.
We call $V^*_i$ the {\em $h^*$-weight space} for the eigenvalue $N-2i$.
We call \eqref{eq:directsum2} the  {\em $h^*$-weight space decomposition} of $V$.

\medskip

\begin{definition}       \label{def:ki}     \samepage
For notational convenience define
\begin{align}    \label{eq:defki}
 k_i & = \binom{N}{i} \left( \frac{p}{1-p} \right)^i
    & i &=0,1,\ldots,N.
\end{align}
Note that $k_0=1$.
\end{definition}

\medskip

We now define a bilinear form 
$\b{\;,\;} : V \times V \to \F$.
As we will see, both
\begin{align}    
 \b{V_i,V_j} &= 0   \qquad\qquad \text{if $\quad i \neq j$},
              & &  i,j=0,1,\ldots N,         \label{eq:ortho}
\\
 \b{V^*_i,V^*_j} &= 0   \qquad\qquad \text{if $\quad i \neq j$},
            & & i,j=0,1,\ldots,N.  \label{eq:orthos}
\end{align}

\medskip

\begin{definition}   \label{def:148}     \samepage
Define a bilinear form $\b{\;,\;} : V \times V \to \F$ by
\begin{align}      \label{eq:defbilin}
 \b{y^{N-i}z^i,y^{N-j}z^j}
  &= \delta_{i,j} \frac{1}{k_i (1-p)^N} ,
  & & i,j=0,1,\ldots,N
\end{align}
where $\{k_i\}_{i=0}^N$ are from Definition \ref{def:ki}.
Observe that $\b{\;,\;}$ is symmetric, nondegenerate, and
satisfies \eqref{eq:ortho}.
\end{definition}

\begin{lemma}  \label{lem:149}     \samepage
For $\vphi \in L$ and $u,v \in V$ we have
\[
  \b{\vphi.u,v} = \b{u,\vphi^\dagger.v}, 
\]
where $\dagger$ is the antiautomorphism of $L$ from Lemma {\rm \ref{lem:135}}.
\end{lemma}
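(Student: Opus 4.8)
The plan is to reduce the claim to a finite check on bases and then verify it by direct computation. Since both sides of the desired identity are $\F$-linear in $\vphi$ and bilinear in $u,v$, it suffices to prove it when $\vphi$ runs through the basis $e,h,f$ of $L$ from \eqref{eq:ehf} and when $u,v$ run through the basis $\{y^{N-i}z^i\}_{i=0}^N$ of $V$. For this we have all the ingredients in hand: the action of $e,h,f$ on $\{y^{N-i}z^i\}_{i=0}^N$ is given in Lemma \ref{lem:143pre}, the effect of $\dagger$ on $e,h,f$ is recorded in Lemma \ref{lem:139}, and the values of $\b{\;,\;}$ on the basis come from Definition \ref{def:148}.

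The case $\vphi=h$ is immediate: since $h^\dagger=h$ by Lemma \ref{lem:139} and $h$ acts diagonally with eigenvalue $N-2i$ on $y^{N-i}z^i$, both $\b{h.(y^{N-i}z^i),y^{N-j}z^j}$ and $\b{y^{N-i}z^i,h^\dagger.(y^{N-j}z^j)}$ vanish unless $i=j$, and when $i=j$ they share the common factor $N-2i$. The cases $\vphi=e$ and $\vphi=f$ are the substance of the proof, and they are entirely analogous, so I would treat $\vphi=e$ in detail. Here $e.(y^{N-i}z^i)=i\,y^{N-i+1}z^{i-1}$ lowers the index by one, while $e^\dagger=\frac{p}{1-p}f$ raises it; hence the left side is nonzero only for $j=i-1$ and the right side only for $i=j+1$, so both sides live on the single diagonal $j=i-1$. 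On that diagonal the identity reduces, after substituting the explicit values from Definition \ref{def:148}, to the scalar relation $\frac{i}{k_{i-1}}=\frac{p(N-i+1)}{(1-p)\,k_i}$.

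This last relation is exactly where the definition of $k_i$ enters: from Definition \ref{def:ki} one computes $\frac{k_i}{k_{i-1}}=\frac{N-i+1}{i}\cdot\frac{p}{1-p}$, and the required scalar relation follows at once. The main obstacle is thus not conceptual but a matter of bookkeeping: one must carry the indices together with the normalizing constants $k_i$ and $(1-p)^N$ correctly through the two sides so that the single surviving entry on each side is matched by this $k_i$-recursion. Once the generators $e,h,f$ are handled, the general case follows by linearity, completing the proof.

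As an alternative to treating $f$ by a separate computation, I note that the set of $\vphi\in L$ satisfying the identity for all $u,v$ is a subspace that is closed under the Lie bracket: if the identity holds for $\vphi_1$ and $\vphi_2$, then applying it twice gives $\b{\vphi_1\vphi_2.u,v}=\b{u,\vphi_2^\dagger\vphi_1^\dagger.v}$, and since $\dagger$ is an antiautomorphism one has $[\vphi_1,\vphi_2]^\dagger=[\vphi_2^\dagger,\vphi_1^\dagger]$, so the identity passes to $[\vphi_1,\vphi_2]$. This subspace is therefore a Lie subalgebra of $L$, and as $[e,f]=h$ the elements $e,f$ generate $L$; hence it would in fact suffice to verify the identity for $\vphi=e$ and $\vphi=f$ alone.
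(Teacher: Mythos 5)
Your proof is correct and follows essentially the same route as the paper's: reduce by linearity to $\vphi\in\{e,h,f\}$ and $u,v$ in the monomial basis, then verify the single surviving diagonal entry using the recursion $\frac{k_i}{k_{i-1}}=\frac{N-i+1}{i}\cdot\frac{p}{1-p}$, which is exactly the identity $(1-p)ik_i=p(N-j)k_j$ the paper invokes. Your closing observation that the set of valid $\vphi$ is a Lie subalgebra (so $h=[e,f]$ comes for free) is a nice extra, but does not change the substance of the argument.
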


\begin{proof}
Without loss of generality, we assume $\vphi$ is in the basis $e,h,f$
and  $u, v$ are in the basis $ \{y^{N-i}z^i\}_{i=0}^N$.
Write 
$u = y^{N-i}z^i$ and $v = y^{N-j}z^j$. 
First assume that $\vphi=e$.
Using Lemma \ref{lem:143pre} and \eqref{eq:defbilin},
\[
 \b{e.u,v} 
   = \b{e.(y^{N-i}z^i),y^{N-j}z^j}
   = \b{i y^{N-i+1} z^{i-1}, y^{N-j}z^j}
   = i \delta_{i-1,j} \frac{1}{k_j (1-p)^N}.
\]
By Lemma \ref{lem:139} $e^\dagger = \frac{p}{1-p}f$. 
Now using Lemma \ref{lem:143pre} and \eqref{eq:defbilin},
\[
 \b{u,e^{\dagger}.v}
  = \bBig{y^{N-i}z^i, \frac{p(N-j)}{1-p} y^{N-j-1} z^{j+1}}
  = \frac{p(N-j)}{1-p} \delta_{i-1,j} \frac{1}{k_i (1-p)^N}.
\]
By \eqref{eq:defki} we have $(1-p)ik_i = p(N-j)k_j$
provided $i-1=j$.
By these comments
$\b{e.u,v} = \b{u,e^\dagger.v}$.
The proof is similar for the case $\vphi=h$ or $\vphi=f$.
\end{proof}

\begin{lemma}   \label{lem:150}     \samepage
The bilinear form $\b{\;,\;}$ satisfies \eqref{eq:orthos}.
\end{lemma}

\begin{proof}
Let $i,j$ be given with $i \neq j$. 
Pick $u \in V^*_i$ and $v \in V^*_j$, so that
$h^*.u=(N-2i)u$ and $h^*.v = (N-2j)v$.
Observe
\[
 (N-2i)\b{u,v}
  = \b{h^*.u,v} = \b{u,(h^*)^\dagger.v}
  = \b{u,h^*.v} = (N-2j)\b{u,v}.
\]
By assumption $\text{Char}(\F) \neq 2$. 
Also since $N$ is feasible, $\text{Char}(\F)$ is $0$ or greater than $N$.
Therefore $2i \neq 2j$.
By these comments $\b{u,v} =0$.
\end{proof}

\medskip

Given a basis $\{u_i\}_{i=0}^N$ for $V$, 
there exists a unique basis $\{v_i\}_{i=0}^N$ for $V$
such that $\b{u_i,v_i} = \delta_{i,j}$ for $i,j=0,1,\ldots,N$.
The bases $\{u_i\}_{i=0}^N$ and $\{v_i\}_{i=0}^N$ are said to be
{\em dual} with respect to $\b{\;,\;}$.

\medskip

\begin{lemma}    \label{lem:151}     \samepage
With respect to $\b{\;,\;}$ the basis for $V$ dual to
$\{y^{N-i}z^i\}_{i=0}^N$ is
$\{k_i(1-p)^N y^{N-i}z^i\}_{i=0}^N$.
\end{lemma}

\begin{proof}
Immediate from \eqref{eq:defbilin}.
\end{proof}

\begin{lemma}    \label{lem:152}     \samepage
For the dual basis in Lemma {\rm \ref{lem:151}}
the sum of the basis vectors is ${y^*}^N$.
\end{lemma}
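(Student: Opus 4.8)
The plan is to expand the sum of the dual basis vectors explicitly and recognize the result as a binomial expansion of $y^*$. First I would recall from Lemma \ref{lem:151} that the dual basis to $\{y^{N-i}z^i\}_{i=0}^N$ consists of the vectors $k_i(1-p)^N y^{N-i}z^i$ for $i=0,1,\ldots,N$, so the quantity to be computed is
\[
 \sum_{i=0}^N k_i(1-p)^N\, y^{N-i}z^i .
\]

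Next I would simplify the scalar coefficient using the definition \eqref{eq:defki} of $k_i$, namely $k_i = \binom{N}{i}\bigl(p/(1-p)\bigr)^i$. Substituting this in and cancelling the powers of $1-p$ gives
\[
 k_i(1-p)^N = \binom{N}{i} p^i (1-p)^{N-i},
 \qquad i=0,1,\ldots,N .
\]
At this point the sum becomes $\sum_{i=0}^N \binom{N}{i} p^i (1-p)^{N-i}\, y^{N-i}z^i$, which I would identify, via the binomial theorem in the commutative ring $\F[y,z]$ applied with the two commuting elements $(1-p)y$ and $pz$, as $\bigl((1-p)y + pz\bigr)^N$. Finally, recalling from \eqref{eq:yszs} that $y^* = (1-p)y + pz$, this last expression is precisely ${y^*}^N$, which proves the claim.

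I do not expect any real obstacle here: the argument is a direct calculation, and the only point worth a sanity check is that the binomial theorem is being applied in a polynomial ring over a field of characteristic possibly positive. Since the binomial theorem $(a+b)^N = \sum_{i=0}^N \binom{N}{i} a^{N-i} b^i$ holds in any commutative ring, with the integer coefficients $\binom{N}{i}$ interpreted in $\F$, no difficulty arises; the feasibility of $N$ ensures these coefficients behave as expected but is not strictly needed for the identity itself.
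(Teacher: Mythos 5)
Your proposal is correct and follows exactly the paper's own computation: substitute the definition of $k_i$, collect the coefficients into $\binom{N}{i}p^i(1-p)^{N-i}$, apply the binomial theorem to get $((1-p)y+pz)^N$, and identify this as ${y^*}^N$ via \eqref{eq:yszs}. Your added remark about the binomial theorem in arbitrary commutative rings is a reasonable sanity check but the paper treats this as immediate.
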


\begin{proof}
Using \eqref{eq:yszs} and \eqref{eq:defki},
\[
 \sum_{i=0}^N k_i(1-p)^N y^{N-i}z^i
 = \sum_{i=0}^N \binom{N}{i} (1-p)^{N-i} p^i y^{N-i} z^i
 = ((1-p)y+pz)^N  
 = {y^*}^N.
\]
\end{proof}

\begin{lemma}   \label{lem:153}     \samepage
For $i,j=0,1,\ldots,N$,
\begin{equation}         \label{eq:lem153}
 \b{{y^*}^{N-i}{z^*}^{i}, {y^*}^{N-j}{z^*}^j}
 = \delta_{i,j} k_i^{-1}.
\end{equation}
\end{lemma}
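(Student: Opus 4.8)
The plan is to split into the off-diagonal and diagonal cases. For $i \neq j$ the vectors ${y^*}^{N-i}{z^*}^i$ and ${y^*}^{N-j}{z^*}^j$ lie in the distinct $h^*$-weight spaces $V^*_i$ and $V^*_j$, so their inner product vanishes by Lemma \ref{lem:150}. This accounts for the factor $\delta_{i,j}$, and it remains only to evaluate the diagonal entries.

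For the diagonal case I abbreviate $w_i = {y^*}^{N-i}{z^*}^i$ and $g_i = \b{w_i,w_i}$, so that the goal is $g_i = k_i^{-1}$. The key step is to derive a recurrence relating $g_i$ to $g_{i-1}$ from the adjoint relation of Lemma \ref{lem:149}. Fix $i$ with $1 \le i \le N$ and apply Lemma \ref{lem:149} with $\vphi = f^*$, $u = w_{i-1}$, $v = w_i$, giving $\b{f^*.w_{i-1},w_i} = \b{w_{i-1},(f^*)^\dagger.w_i}$. On the left, Lemma \ref{lem:145pre} gives $f^*.w_{i-1} = (N-i+1)w_i$, so the left side is $(N-i+1)g_i$. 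On the right, Lemma \ref{lem:139} gives $(f^*)^\dagger = \frac{1-p}{p}e^*$, and Lemma \ref{lem:145pre} gives $e^*.w_i = i\,w_{i-1}$, so the right side is $\frac{(1-p)i}{p}g_{i-1}$. Equating yields
\[
  (N-i+1)\,g_i = \frac{(1-p)\,i}{p}\,g_{i-1}, \qquad 1 \le i \le N.
\]
Since $N$ is feasible, the integers $N-i+1$ (for $1 \le i \le N$) and $p$ are nonzero in $\F$, so this recurrence determines each $g_i$ from $g_{i-1}$.

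It remains to supply the initial value $g_0 = \b{{y^*}^N,{y^*}^N}$ and to solve. For the initial value I expand one copy of ${y^*}^N$ by the binomial theorem using $y^* = (1-p)y + pz$ from \eqref{eq:yszs}, and pair term by term against the other copy. By Lemma \ref{lem:152} the element ${y^*}^N$ is the sum of the basis dual to $\{y^{N-i}z^i\}_{i=0}^N$, so $\b{y^{N-k}z^k,{y^*}^N}=1$ for each $k$ by Lemma \ref{lem:151}; hence $g_0 = \sum_{k=0}^N \binom{N}{k}(1-p)^{N-k}p^k = 1 = k_0^{-1}$. Finally, iterating the recurrence from $g_0 = 1$ produces the telescoping product $g_i = \left(\frac{1-p}{p}\right)^i \frac{i!}{N(N-1)\cdots(N-i+1)}$, which collapses to $\binom{N}{i}^{-1}\left(\frac{1-p}{p}\right)^i = k_i^{-1}$ by Definition \ref{def:ki}. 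The only points requiring care are the bookkeeping in the $\dagger$-action feeding the recurrence and the binomial identity for the base case; there is no genuine obstacle, since the weight-space orthogonality, the explicit $\dagger$-action, and the raising and lowering action on the starred basis are all already established.
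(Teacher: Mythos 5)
Your proof is correct and follows essentially the same route as the paper: orthogonality of the distinct $h^*$-weight spaces for the off-diagonal case, a base case $g_0=1$ obtained from the binomial expansion of ${y^*}^N$ via Lemmas \ref{lem:151} and \ref{lem:152}, and a recurrence from the adjoint relation of Lemma \ref{lem:149} combined with Lemmas \ref{lem:139} and \ref{lem:145pre}. The only cosmetic difference is that you apply the adjoint relation with $f^*$ acting on $w_{i-1}$ whereas the paper uses $e^*$ acting on $w_i$, which yields the identical recurrence $(N-i+1)g_i=\tfrac{(1-p)i}{p}\,g_{i-1}$.
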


\begin{proof}
We assume $i=j$; otherwise \eqref{eq:lem153} holds by Lemma \ref{lem:150}.
We proceed using induction on $i$.
First assume that $i=0$.
Observe
\begin{align*}
 \left\| {y^*}^N \right\|^2
  &= \left\| \sum_{\ell=0}^N k_{\ell} (1-p)^N y^{N-\ell} z^\ell \right\|^2
              & & \text{(by Lemma \ref{lem:152})}
\\
 &= \sum_{\ell=0}^N \left\| k_{\ell} (1-p)^N y^{N-\ell}z^\ell \right\|^2
              & & \text{(by \eqref{eq:ortho})}
\\
 &= \sum_{\ell=0}^N k_\ell(1-p)^N
              & & \text{(by \eqref{eq:defbilin})}
\\
 &= \sum_{\ell=0}^N \binom{N}{\ell} (1-p)^{N-\ell} p^\ell
              & & \text{(by \eqref{eq:defki})}
\\
 &= (1-p+p)^N 
\\
 &= 1.
\end{align*}
Therefore \eqref{eq:lem153} holds for $i=0$.
Next assume that $i \geq 1$.
By Lemma \ref{lem:149},
\begin{equation}    \label{eq:lem153aux}
 \b{e^*.({y^*}^{N-i}{z^*}^i), {y^*}^{N-i+1}{z^*}^{i-1} }
 = \b{{y^*}^{N-i}{z^*}^i, (e^*)^\dagger .({y^*}^{N-i+1} {z^*}^{i-1}) }.
\end{equation}
By Lemma \ref{lem:145pre} the left-hand side of \eqref{eq:lem153aux} is equal to
$\| {y^*}^{N-i+1}{z^*}^{i-1} \|^2 i$ and this is equal to
$i k_{i-1}^{-1}$ by induction.
By Lemmas \ref{lem:139} and \ref{lem:145pre},
the right-hand side of \eqref{eq:lem153aux} is
equal to $\|{y^*}^{N-i}{z^*}^i\|^2 (N-i+1) p(1-p)^{-1}$.
By these comments 
$i k_{i-1}^{-1} = \|{y^*}^{N-i}{z^*}^i\|^2 (N-i+1) p(1-p)^{-1}$.
Evaluating this using \eqref{eq:defki} we obtain
$\|{y^*}^{N-i}{z^*}^i\|^2 = k_i^{-1}$.
Therefore \eqref{eq:lem153} holds at $i$ and the result follows.
\end{proof}

\begin{lemma}  \label{lem:154}     \samepage
With respect to $\b{\;,\;}$ the basis for $V$ dual to
$\{{y^*}^{N-i}{z^*}^i\}_{i=0}^N$ is
$\{ k_i {y^*}^{N-i}{z^*}^i\}_{i=0}^N$.
\end{lemma}

\begin{proof}
Immediate from Lemma \ref{lem:153}.
\end{proof}

\begin{lemma}   \label{lem:155}     \samepage
For the dual basis in Lemma {\rm \ref{lem:154}}
the sum of the basis vectors is $y^N$.
\end{lemma}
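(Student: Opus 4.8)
The plan is to mirror the computation in Lemma \ref{lem:152}, but now expanding $y^N$ rather than ${y^*}^N$. The target identity is $\sum_{i=0}^N k_i\, {y^*}^{N-i}{z^*}^i = y^N$, and the natural tool is the expression for $y$ in terms of the starred basis of $\text{Hom}_1({\cal A})$ provided by \eqref{eq:yz}, namely $y = y^* + \frac{p}{1-p}z^*$.

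First I would raise this to the $N$th power and apply the binomial theorem, which gives $y^N = \bigl(y^* + \tfrac{p}{1-p}z^*\bigr)^N = \sum_{i=0}^N \binom{N}{i}\bigl(\tfrac{p}{1-p}\bigr)^i {y^*}^{N-i}{z^*}^i$. I would then recognize, via the definition \eqref{eq:defki} of $k_i = \binom{N}{i}\bigl(\tfrac{p}{1-p}\bigr)^i$, that the coefficient of ${y^*}^{N-i}{z^*}^i$ is precisely $k_i$. This identifies the right-hand side with $\sum_{i=0}^N k_i\, {y^*}^{N-i}{z^*}^i$, which is the sum of the basis vectors described in Lemma \ref{lem:154}, completing the proof.

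There is no real obstacle here: the binomial expansion is legitimate because $y^*$ and $z^*$ commute (they lie in the commutative algebra ${\cal A}$), and the scalar $\frac{p}{1-p}$ is well defined since $p \neq 0$ and $p \neq 1$. The only point requiring a moment's care is matching the binomial coefficient and power of $\frac{p}{1-p}$ against the definition of $k_i$, but this is immediate from \eqref{eq:defki}. In short, the lemma is the dual counterpart of Lemma \ref{lem:152}, obtained by exchanging the roles of the two bases $y,z$ and $y^*,z^*$ and invoking \eqref{eq:yz} in place of \eqref{eq:yszs}.
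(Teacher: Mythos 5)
Your proposal is correct and matches the paper's proof exactly: both expand $y^N = \bigl(y^* + \tfrac{p}{1-p}z^*\bigr)^N$ via the binomial theorem using \eqref{eq:yz} and identify the coefficients with $k_i$ from \eqref{eq:defki}. No differences worth noting.
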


\begin{proof}
Using \eqref{eq:yz} and \eqref{eq:defki},
\[
\sum_{i=0}^N k_i {y^*}^{N-i}{z^*}^i
  = \sum_{i=0}^N \binom{N}{i} 
     \left( \frac{p}{1-p} \right)^i
            {y^*}^{N-i}{z^*}^i
  =  \left( y^* + \frac{p}{1-p} z^* \right)^N  
  = y^N.
\]
\end{proof}

\medskip

We have been discussing the bases 
$\{y^{N-i}z^i\}_{i=0}^N$ and $\{{y^*}^{N-i}{z^*}^i\}_{i=0}^N$ for $V$.
We now find the transition matrices between these bases.
We use Krawtchouk polynomials $\{K_i(x)\}_{i=0}^N$ from \eqref{eq:Kn}.

\medskip

\begin{lemma}   \label{lem:156}     \samepage
For $j=0,1,\ldots,N$ both
\begin{align}
 {y^*}^{N-j}{z^*}^j &=
  \sum_{i=0}^N \binom{N}{i} (1-p)^{N-i}p^i K_i(j) y^{N-i}z^i ,
                                    \label{eq:lem156a}
\\
 y^{N-j}z^j &=
  \sum_{i=0}^N \binom{N}{i}
    \left( \frac{p}{1-p} \right)^i K_i(j) {y^*}^{N-i}{z^*}^i.  \label{eq:lem156b}
\end{align}
\end{lemma}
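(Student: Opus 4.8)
The plan is to prove \eqref{eq:lem156a} by a direct binomial expansion and then deduce \eqref{eq:lem156b} almost for free from the bilinear form together with the self-duality $K_i(j)=K_j(i)$ of \eqref{eq:AW}. First I would substitute the definitions \eqref{eq:yszs}, namely $y^*=(1-p)y+pz$ and $z^*=(1-p)y+(p-1)z$, into ${y^*}^{N-j}{z^*}^j$ and expand each factor by the binomial theorem. Collecting the coefficient of $y^{N-i}z^i$ forces the $z$-degrees to sum to $i$, and after factoring out the common power $(1-p)^{N-i}$ one is left with the task of showing
\[
 [w^i]\,(1+pw)^{N-j}\bigl(1+(p-1)w\bigr)^j = \binom{N}{i}p^i K_i(j),
\]
where $[w^i]$ denotes extraction of the coefficient of $w^i$. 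The clean way to evaluate the left-hand side is to write $1+(p-1)w=(1+pw)-w$, so that $\bigl(1+(p-1)w\bigr)^j=\sum_{t}\binom{j}{t}(1+pw)^{j-t}(-w)^t$; multiplying by $(1+pw)^{N-j}$ collapses this to $\sum_{t}\binom{j}{t}(-1)^t w^t (1+pw)^{N-t}$, a purely polynomial manipulation valid over $\F$. Extracting $[w^i]$ then gives $\sum_t\binom{j}{t}\binom{N-t}{i-t}(-1)^t p^{i-t}$.

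The crux is to match this against $\binom{N}{i}p^i K_i(j)$. I would expand $K_i(j)$ from its hypergeometric definition \eqref{eq:Kn}, rewrite the shifted factorials $(-i)_n,(-j)_n,(-N)_n$ as ratios of factorials, and check that the general term of $\binom{N}{i}p^i K_i(j)$ equals $(-1)^n p^{i-n}\,\frac{j!\,(N-n)!}{n!\,(j-n)!\,(i-n)!\,(N-i)!}$, which is exactly the $t=n$ summand above. This term-by-term identification is the one genuinely computational step; I expect it to be the main obstacle, although it is routine once the factorization trick reduces both sides to a single standard binomial form. The hypotheses on $N$ (feasible, $\mathrm{Char}(\F)$ either $0$ or exceeding $N$) guarantee that all the binomial coefficients and the denominators $(-N)_n$ behave as in characteristic zero, so the manipulation is legitimate.

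With \eqref{eq:lem156a} in hand, I would avoid repeating the computation for \eqref{eq:lem156b} and instead pass through the bilinear form. Pairing \eqref{eq:lem156a} against the dual basis of Lemma~\ref{lem:151} and simplifying the constant $k_i=\binom{N}{i}(p/(1-p))^i$ from \eqref{eq:defki} yields the clean intermediate fact $\b{y^{N-i}z^i,{y^*}^{N-j}{z^*}^j}=K_i(j)$. Now expand $y^{N-j}z^j=\sum_i Q_{ij}\,{y^*}^{N-i}{z^*}^i$ and pair against the dual basis of Lemma~\ref{lem:154}; using symmetry of $\b{\;,\;}$ this gives $Q_{ij}=k_i\,\b{y^{N-j}z^j,{y^*}^{N-i}{z^*}^i}=k_i K_j(i)$. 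Finally self-duality $K_j(i)=K_i(j)$ from \eqref{eq:AW} converts this into $Q_{ij}=k_i K_i(j)=\binom{N}{i}(p/(1-p))^i K_i(j)$, which is precisely \eqref{eq:lem156b}. As a remark I would note that \eqref{eq:lem156b} can alternatively be obtained by the same direct expansion applied to \eqref{eq:yz}, which leads to the identical binomial identity; deriving it instead from \eqref{eq:lem156a} and self-duality is the more economical route and simultaneously establishes the key pairing $\b{y^{N-i}z^i,{y^*}^{N-j}{z^*}^j}=K_i(j)$ promised in the introduction.
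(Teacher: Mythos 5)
Your proof is correct. For \eqref{eq:lem156a} it is essentially the paper's computation in different packaging: the paper writes $z^*=y^*-z$ and expands ${y^*}^{N-j}(y^*-z)^j$ in two stages (first in the mixed monomials ${y^*}^{N-\ell}z^\ell$, then converting each of those to the $y,z$ basis by the binomial theorem), while you dehomogenize and extract coefficients of $w^i$; your identity $1+(p-1)w=(1+pw)-w$ is exactly $z^*=y^*-z$ in the variable $w$ after rescaling by $1-p$. Both routes reduce to the same term-by-term identification with the ${}_2F_1$ sum, and your general-term computation checks out. Where you genuinely depart from the paper is \eqref{eq:lem156b}: the paper simply repeats the direct expansion (``the proof is similar''), whereas you deduce it from \eqref{eq:lem156a} by pairing against the dual bases of Lemmas \ref{lem:151} and \ref{lem:154} and invoking the self-duality \eqref{eq:AW}. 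This is legitimate --- Lemmas \ref{lem:151}, \ref{lem:153}, \ref{lem:154} are all established before Lemma \ref{lem:156} and do not depend on it, so there is no circularity --- and it is more economical: it halves the computation and yields the pairing formula $\b{y^{N-i}z^i,{y^*}^{N-j}{z^*}^j}=K_i(j)$ (the paper's Lemma \ref{lem:157}) as a byproduct. The only cost is that the second identity then rests on the bilinear-form machinery rather than standing alone as a polynomial identity.
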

 
\begin{proof}
We first show \eqref{eq:lem156a}.
By \eqref{eq:yz} we have $z^*=y^*-z$, so the left-hand side of
\eqref{eq:lem156a} is
\[
 {y^*}^{N-j}({y^*}-z)^j
 = {y^*}^{N-j} \sum_{\ell=0}^j \binom{j}{\ell}(-1)^\ell {y^*}^{j-\ell} z^\ell
 = \sum_{\ell=0}^N \frac{(-j)_\ell}{\ell!} {y^*}^{N-\ell} z^\ell.
\]
By \eqref{eq:Kn} the right-hand side of \eqref{eq:lem156a} is
\[
 \sum_{\ell=0}^N \frac{(-j)_\ell}{(-N)_\ell \ell! p^\ell}
  \sum_{i=0}^N \binom{N}{i} (1-p)^{N-i}p^i (-i)_\ell y^{N-i}z^i.
\]
So it suffices to show that for $\ell=0,1,\ldots,N$,
\begin{equation}    \label{eq:lem156aux1}
{y^*}^{N-\ell}z^\ell
 = \frac{1}{(-N)_\ell p^\ell}
   \sum_{i=0}^N \binom{N}{i} (1-p)^{N-i}p^i (-i)_\ell y^{N-i} z^i.
\end{equation}
In the right-hand side of \eqref{eq:lem156aux1} the $i$th term
vanishes for $i<\ell$. 
So changing the variable $r=i-\ell$, the right-hand side of
\eqref{eq:lem156aux1} becomes
\begin{align*}
\frac{1}{(-N)_\ell p^\ell}
 & \sum_{r=0}^{N-\ell} \binom{N}{r+\ell} (1-p)^{N-r-\ell}p^{r+\ell}
    (-r-\ell)_\ell y^{N-r-\ell}z^{r+\ell}
\\
 &= \frac{(-1)^\ell (N-\ell)!}{N!}
   \sum_{r=0}^{N-\ell} \frac{N!}{(r+\ell)!(N-r-\ell)!}
     (1-p)^{N-r-\ell}p^r \frac{(-1)^\ell (r+\ell)!}{r!}
        y^{N-r-\ell} z^{r+\ell}
\\
 &= z^\ell \sum_{r=0}^{N-\ell} \frac{(N-\ell)!}{r!(N-\ell-r)!}
      (1-p)^{N-\ell-r} p^r y^{N-\ell-r} z^r
\\
 &= z^\ell \sum_{r=0}^{N-\ell} \binom{N-\ell}{r}
             ((1-p)y)^{N-\ell-r} (pz)^r
\\
 &= z^\ell ((1-p)y+pz)^{N-\ell} 
\\
& = z^\ell {y^*}^{N-\ell} 
                   \qquad\qquad\qquad \text{(by \eqref{eq:yszs})}.
\end{align*}
Thus \eqref{eq:lem156aux1} holds.
We have shown \eqref{eq:lem156a}.
The proof of \eqref{eq:lem156b} is similar.
\end{proof}

\medskip

We now find the inner products between the bases
$\{y^{N-i}z^i\}_{i=0}^N$ and $\{{y^*}^{N-i}{z^*}^i\}_{i=0}^N$.

\medskip

\begin{lemma}    \label{lem:157}     \samepage
For $i,j=0,1,\ldots,N$,
\[
 \b{y^{N-i}z^i,{y^*}^{N-j}{z^*}^j} = K_i(j).
\]
\end{lemma}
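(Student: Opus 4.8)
The plan is to compute the inner product directly by expanding its second argument in the basis $\{y^{N-\ell}z^\ell\}_{\ell=0}^N$ and then invoking orthogonality. First I would apply Lemma~\ref{lem:156}, specifically \eqref{eq:lem156a}, to write ${y^*}^{N-j}{z^*}^j = \sum_{\ell=0}^N \binom{N}{\ell}(1-p)^{N-\ell}p^\ell K_\ell(j)\, y^{N-\ell}z^\ell$. Substituting this expansion into $\b{y^{N-i}z^i,{y^*}^{N-j}{z^*}^j}$ and using the bilinearity of $\b{\;,\;}$ expresses the quantity as the sum $\sum_{\ell=0}^N \binom{N}{\ell}(1-p)^{N-\ell}p^\ell K_\ell(j)\, \b{y^{N-i}z^i,y^{N-\ell}z^\ell}$.

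The second step is to apply the defining relation \eqref{eq:defbilin}, which gives $\b{y^{N-i}z^i,y^{N-\ell}z^\ell}=\delta_{i,\ell}/\bigl(k_i(1-p)^N\bigr)$. This annihilates every term of the sum except the one with $\ell=i$, leaving the single expression $\binom{N}{i}(1-p)^{N-i}p^i K_i(j)/\bigl(k_i(1-p)^N\bigr)$.

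The final step is a short algebraic simplification. Substituting $k_i=\binom{N}{i}\bigl(p/(1-p)\bigr)^i$ from \eqref{eq:defki} and cancelling, the binomial coefficients $\binom{N}{i}$ cancel, the factor $p^i$ cancels, and the powers of $1-p$ combine as $(1-p)^{N-i}(1-p)^i/(1-p)^N=1$; what remains is exactly $K_i(j)$. I expect essentially no obstacle here: the normalizing factor $1/\bigl(k_i(1-p)^N\bigr)$ appearing in Definition~\ref{def:148} was evidently engineered so that this cancellation is clean, and the Kronecker delta from \eqref{eq:ortho} does all the real work. As an alternative one could instead expand $y^{N-i}z^i$ through \eqref{eq:lem156b}, pair against the starred basis using the orthogonality relation of Lemma~\ref{lem:153}, and finish with the self-duality $K_i(j)=K_j(i)$ from \eqref{eq:AW}; but the route through \eqref{eq:lem156a} is the most direct.
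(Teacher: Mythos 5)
Your proof is correct and follows exactly the route the paper intends: its own proof consists of the single line ``Use \eqref{eq:defki}, \eqref{eq:defbilin}, and \eqref{eq:lem156a},'' which is precisely the expansion, orthogonality, and cancellation you carried out. Nothing to add.
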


\begin{proof}
Use \eqref{eq:defki}, \eqref{eq:defbilin}, and \eqref{eq:lem156a}.
\end{proof}

\medskip

Define $\F$-linear transformations $A:V \to V$ and $A^* :V \to V$ by
\begin{align}             \label{eq:defAAs}
 A &= \frac{NI-a}{2}, & A^* &= \frac{NI-a^*}{2},
\end{align}
where $a$, $a^*$ are from \eqref{eq:aas}.
Note that on $V$,
\begin{align}               \label{eq:defAAs2}
 a &= NI-2A, &  a^* &= NI-2A^*.
\end{align}

\medskip

\begin{theorem}   \label{thm:158}     \samepage
For $j=0,1,\ldots,N$ both
\begin{align}
   K_j(A)y^N &= y^{N-j} z^j,                 \label{eq:thm158a}  \\
 K_j(A^*){y^*}^N &= {y^*}^{N-j}{z^*}^j.   \label{eq:thm158b}
\end{align}
\end{theorem}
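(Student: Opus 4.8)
The plan is to exploit that $A$ and $A^*$ act as diagonalizable ``counting'' operators on the two weight bases, and then to combine the expansions of $y^N$ and ${y^*}^N$ furnished by Lemmas~\ref{lem:152} and~\ref{lem:155} with the self-duality \eqref{eq:AW}. Once the eigenvalue bookkeeping is set up, each identity falls out by a short manipulation of the transition formulas in Lemma~\ref{lem:156}.

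First I would record how $A$ and $A^*$ act on the weight bases. Since $a^*=h$ by \eqref{eq:aas} and $a=h^*$ by \eqref{eq:hs}, the definition \eqref{eq:defAAs} gives $A=(NI-h^*)/2$ and $A^*=(NI-h)/2$. By Lemma~\ref{lem:145pre} the vector ${y^*}^{N-i}{z^*}^i$ is an eigenvector of $h^*$ with eigenvalue $N-2i$, hence an eigenvector of $A$ with eigenvalue $i$; likewise, by Lemma~\ref{lem:143pre} the vector $y^{N-i}z^i$ is an eigenvector of $A^*$ with eigenvalue $i$. Consequently, for any polynomial $K_j$, the operator $K_j(A)$ scales ${y^*}^{N-i}{z^*}^i$ by $K_j(i)$, and $K_j(A^*)$ scales $y^{N-i}z^i$ by $K_j(i)$.

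To prove \eqref{eq:thm158a}, I would start from the expansion $y^N=\sum_{i=0}^N k_i\,{y^*}^{N-i}{z^*}^i$ given by Lemma~\ref{lem:155}. Applying $K_j(A)$ term by term and using the eigenvalue computation yields $K_j(A)y^N=\sum_{i=0}^N k_i K_j(i)\,{y^*}^{N-i}{z^*}^i$. I would then invoke self-duality \eqref{eq:AW} to replace $K_j(i)$ by $K_i(j)$ and substitute $k_i=\binom{N}{i}(p/(1-p))^i$ from Definition~\ref{def:ki}; the right-hand side becomes exactly the sum appearing in \eqref{eq:lem156b}, which equals $y^{N-j}z^j$. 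The proof of \eqref{eq:thm158b} is the mirror image: start from ${y^*}^N=\sum_{i=0}^N k_i(1-p)^N y^{N-i}z^i$ (Lemma~\ref{lem:152}), apply $K_j(A^*)$, use the eigenvalue $i$ on $y^{N-i}z^i$ together with self-duality, and simplify $k_i(1-p)^N=\binom{N}{i}p^i(1-p)^{N-i}$ so that the result matches the sum in \eqref{eq:lem156a}, namely ${y^*}^{N-j}{z^*}^j$.

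The argument is essentially bookkeeping once the two eigenvalue observations are in place, so there is no serious obstacle. The one point that must be handled with care is the interchange of $K_j(i)$ and $K_i(j)$ via \eqref{eq:AW}: it is precisely this self-duality that converts the diagonal action of $K_j(A)$ into the off-diagonal transition coefficients of Lemma~\ref{lem:156}. I would also double-check the normalization constants $k_i$ and $k_i(1-p)^N$ against \eqref{eq:lem156a}--\eqref{eq:lem156b} to confirm that the binomial and power-of-$p$ factors align exactly.
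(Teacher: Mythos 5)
Your proposal is correct and follows essentially the same route as the paper: identify ${y^*}^{N-i}{z^*}^i$ (resp.\ $y^{N-i}z^i$) as an eigenvector of $A$ (resp.\ $A^*$) with eigenvalue $i$, expand $y^N$ and ${y^*}^N$ via Lemmas~\ref{lem:155} and~\ref{lem:152}, and match the resulting sums against Lemma~\ref{lem:156} using the self-duality \eqref{eq:AW}. Your explicit attention to the interchange $K_j(i)=K_i(j)$ and the normalization $k_i(1-p)^N=\binom{N}{i}p^i(1-p)^{N-i}$ is exactly the bookkeeping the paper's proof relies on but leaves partly implicit.
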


\begin{proof}
We first show \eqref{eq:thm158a}.
By \eqref{eq:hs} we have $a=h^*$.
By this and Lemma \ref{lem:145pre},
for $i=0,1,\ldots,N$ the vector ${y^*}^{N-i}{z^*}^i$ is
an eigenvector for $a$ with eigenvalue $N-2i$.
Therefore  ${y^*}^{N-i}{z^*}^i$ is
an eigenvector for $A$  with eigenvalue $i$.
Now using Lemma \ref{lem:155} along with \eqref{eq:defki}, \eqref{eq:lem156b}
we obtain
\[
K_j(A)y^N 
 = K_j(A) \sum_{i=0}^N {y^*}^{N-i}{z^*}^i k_i 
 = \sum_{i=0}^N {y^*}^{N-i}{z^*}^i k_i K_j(i)
 = y^{N-j}z^j.
\]
We have shown \eqref{eq:thm158a}.
The proof of \eqref{eq:thm158b} is similar.
\end{proof}

\medskip

For the rest of this section, we use our results so far to
easily recover some well-known properties of Krawtchouk
polynomials.

\medskip

\begin{theorem}    {\rm \cite[Section 9.11]{KLS}}   \label{thm:114}   \samepage
Krawtchouk polynomials satisfy the following orthogonality
relations:
\begin{itemize}
\item[\rm (i)]
For $i,j=0,1,\ldots,N$,
\begin{equation}                \label{eq:thm114b}
\sum_{n=0}^N K_n(i)K_n(j) \binom{N}{n} p^n(1-p)^{N-n}
 = \delta_{i,j} \binom{N}{i}^{-1} \left( \frac{1-p}{p} \right)^i.
\end{equation}
\item[\rm (ii)]
For $n,m=0,1,\ldots,N$,
\begin{equation}                      \label{eq:thm114a}
\sum_{i=0}^N K_n(i)K_m(i) \binom{N}{i} p^i(1-p)^{N-i}
 = \delta_{n,m} \binom{N}{n}^{-1}  \left( \frac{1-p}{p} \right)^n.
\end{equation}
\end{itemize}
\end{theorem}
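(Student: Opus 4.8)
The plan is to read each orthogonality relation as the assertion that the Gram matrix of one of the two weight bases for $V$ is diagonal, and to evaluate a single inner product in two different ways. I would first observe that the two relations are equivalent. Indeed, by the self-duality relation \eqref{eq:AW}, namely $K_i(j)=K_j(i)$, replacing $K_n(i)$ by $K_i(n)$ and $K_n(j)$ by $K_j(n)$ throughout \eqref{eq:thm114b} turns it, after relabeling the bound index $n\mapsto i$ and the free indices $i\mapsto n$, $j\mapsto m$, into exactly \eqref{eq:thm114a}. Hence it suffices to establish part (i).

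To prove (i), I would compute $\langle {y^*}^{N-i}{z^*}^i,\,{y^*}^{N-j}{z^*}^j\rangle$ in two ways. On one hand, Lemma \ref{lem:153} gives this inner product directly as $\delta_{i,j}k_i^{-1}$. On the other hand, I expand each starred monomial in the basis $\{y^{N-n}z^n\}_{n=0}^N$ via \eqref{eq:lem156a}, obtaining
\[
\langle {y^*}^{N-i}{z^*}^i,\,{y^*}^{N-j}{z^*}^j\rangle
= \sum_{n=0}^N \sum_{m=0}^N \binom{N}{n}\binom{N}{m}(1-p)^{2N-n-m}p^{n+m} K_n(i)K_m(j)\,\langle y^{N-n}z^n,\,y^{N-m}z^m\rangle.
\]
By the orthogonality \eqref{eq:defbilin} of the unstarred basis only the terms with $m=n$ survive, each contributing the factor $1/\bigl(k_n(1-p)^N\bigr)$. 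Substituting $k_n=\binom{N}{n}\bigl(p/(1-p)\bigr)^n$ from \eqref{eq:defki} and simplifying the powers of $p$ and $1-p$ collapses the double sum to
\[
\sum_{n=0}^N \binom{N}{n}p^n(1-p)^{N-n}K_n(i)K_n(j),
\]
which is precisely the left-hand side of \eqref{eq:thm114b}. Equating this with $\delta_{i,j}k_i^{-1}=\delta_{i,j}\binom{N}{i}^{-1}\bigl((1-p)/p\bigr)^i$ yields (i).

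Relation (ii) then follows from (i) by the self-duality substitution noted above; alternatively one can mirror the argument by evaluating the unstarred Gram entry $\langle y^{N-n}z^n,\,y^{N-m}z^m\rangle$ in two ways, using \eqref{eq:lem156b} and Lemma \ref{lem:153} in place of \eqref{eq:lem156a} and \eqref{eq:defbilin}.

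There is no real conceptual obstacle here: the substance is already carried by the transition formulas of Lemma \ref{lem:156} together with the two diagonal inner-product formulas \eqref{eq:defbilin} and \eqref{eq:lem153}. The only place demanding care is the bookkeeping of the factors $\binom{N}{n}$, $p^n$, and $(1-p)^{N-n}$ when the double sum collapses; in particular one must check that the $(1-p)^{-N}$ arising from \eqref{eq:defbilin} combines with the powers coming from $k_n$ to reproduce exactly the weight $p^n(1-p)^{N-n}$ appearing in \eqref{eq:thm114b}. I would present this single simplification explicitly as the one computational step of the proof.
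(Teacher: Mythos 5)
Your proposal is correct and follows essentially the same route as the paper: both evaluate $\b{{y^*}^{N-i}{z^*}^i,\,{y^*}^{N-j}{z^*}^j}$ in two ways, using Lemma \ref{lem:153} on one side and the expansion \eqref{eq:lem156a} together with \eqref{eq:defbilin} on the other, and then deduce (ii) from (i) via the self-duality \eqref{eq:AW}. The bookkeeping of the weights $\binom{N}{n}p^n(1-p)^{N-n}$ that you flag as the one computational step checks out.
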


\begin{proof}
(i):
We compute 
$\b{{y^*}^{N-i}{z^*}^i, {y^*}^{N-j}{z^*}^j}$ in two ways.
On one hand, by Lemma \ref{lem:153} and \eqref{eq:defki}
we find that $\b{{y^*}^{N-i}{z^*}^i, {y^*}^{N-j}{z^*}^j}$ is equal to
the right-hand side of \eqref{eq:thm114b}.
On the other hand, by \eqref{eq:lem156a},
\begin{align}
 {y^*}^{N-i}{z^*}^i
 &= \sum_{n=0}^N \binom{N}{n}(1-p)^{N-n}p^n K_n(i) y^{N-n}z^n,
                                                             \label{eq:thm114aux1} \\
 {y^*}^{N-j}{z^*}^j
 &= \sum_{m=0}^N \binom{N}{m}(1-p)^{N-m}p^m K_m(j) y^{N-m}z^m.
                                                             \label{eq:thm114aux2}
\end{align}
Computing $\b{{y^*}^{N-i}{z^*}^i,{y^*}^{N-j}{z^*}^j}$ using \eqref{eq:thm114aux1},
\eqref{eq:thm114aux2} and Definition \ref{def:148} we find it equals the
left-hand side of \eqref{eq:thm114b}.
Therefore \eqref{eq:thm114b} holds.

(ii):
Follows from (i) using \eqref{eq:AW}.
\end{proof}

\medskip

Krawtchouk polynomials satisfy the following three-term recurrence.

\medskip

\begin{theorem}  {\rm \cite[Section 9.11]{KLS}}    \label{thm:3termrec} \samepage
For $i,x=0,1,\ldots,N$,
\begin{equation}    \label{eq:3termrec}
 x K_i(x) = i(p-1) K_{i-1}(x) - (i(p-1)+(i-N)p) K_i(x) 
         + (i-N)p K_{i+1}(x).
\end{equation}
\end{theorem}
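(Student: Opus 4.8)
The plan is to prove the three-term recurrence by exploiting the fact, established in Theorem~\ref{thm:158}, that $K_j(A)y^N = y^{N-j}z^j$, where $A = (NI-a)/2$ and the vectors $\{y^{N-i}z^i\}_{i=0}^N$ diagonalize $h$. The key observation is that the recurrence \eqref{eq:3termrec} should fall out of writing the action of some linear operator on these vectors in two ways. Since $A$ acts diagonalizably with eigenvalue $i$ on ${y^*}^{N-i}{z^*}^i$, the natural object to introduce is the operator $A^*$ (or equivalently $a^*=h$), because the Askey-Wilson relation \eqref{eq:lem124a} couples $a$ and $a^*$ and is precisely what will generate the three-term structure.

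First I would record how $A^*$ acts on the basis $\{y^{N-i}z^i\}_{i=0}^N$. Since $a^* = h$ by \eqref{eq:aas} and $h.(y^{N-i}z^i)=(N-2i)y^{N-i}z^i$ by Lemma~\ref{lem:143pre}, we have $A^* = (NI-h)/2$ acting as $A^*.(y^{N-i}z^i) = i\,y^{N-i}z^i$. Thus both bases are eigenbases: $A^*$ is diagonal on $\{y^{N-i}z^i\}$ with eigenvalue $i$, while $A$ is diagonal on $\{{y^*}^{N-i}{z^*}^i\}$ with eigenvalue $i$. The strategy is then to compute $A^*.(y^{N-j}z^j)$ in a second way by using Theorem~\ref{thm:158}: writing $y^{N-j}z^j = K_j(A)y^N$ and observing that $A^*$ fails to commute with $K_j(A)$, the commutator $[A^*, A]$ is controlled by the Askey-Wilson relation. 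Concretely I would translate \eqref{eq:lem124a}, namely $[a,[a,a^*]]=4(2p-1)a+4a^*$, into a relation among $A$ and $A^*$, and use it to show that $A$ acts in an irreducible tridiagonal fashion on the basis $\{y^{N-i}z^i\}$. Reading off the three diagonals of this tridiagonal action against the eigenvalue $j$ of $A^*$ yields the recurrence.

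The cleanest route is to compute the action of $A$ on the basis $\{y^{N-i}z^i\}_{i=0}^N$ directly. Applying $A=(NI-a)/2$ with $a$ from \eqref{eq:aas} and the explicit matrices for $e,h,f$ from Lemma~\ref{lem:143}, one obtains that $A$ is tridiagonal with respect to $\{y^{N-i}z^i\}$; its entries are explicit polynomials in $p$, $i$, $N$. Now evaluate the matrix identity coming from $A.(y^{N-j}z^j)$ against $\b{\,\cdot\,,{y^*}^{N-x}{z^*}^x}$. Using Lemma~\ref{lem:157}, which gives $\b{y^{N-i}z^i,{y^*}^{N-x}{z^*}^x}=K_i(x)$, and using that ${y^*}^{N-x}{z^*}^x$ is an eigenvector for $A$ with eigenvalue $x$ (so that $A^\dagger$ acting on it, via Lemma~\ref{lem:149}, returns eigenvalue $x$), the left side produces $x\,K_j(x)$ and the right side produces a linear combination of $K_{j-1}(x)$, $K_j(x)$, $K_{j+1}(x)$ with the tridiagonal entries as coefficients. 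Matching these coefficients gives \eqref{eq:3termrec}.

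The main obstacle I anticipate is purely bookkeeping: one must verify that $\dagger$ fixes $a$ (hence $A$), so that $\b{A.u,v}=\b{u,A.v}$ and the eigenvalue $x$ of $A$ on ${y^*}^{N-x}{z^*}^x$ can be moved onto the first slot of the pairing; this is exactly Lemma~\ref{lem:135}, since $A$ is a polynomial in $a$ and $\dagger$ fixes $a$. Beyond that, the coefficients appearing in the tridiagonal form of $A$ must be computed carefully and then matched term-by-term against the three coefficients $i(p-1)$, $-(i(p-1)+(i-N)p)$, and $(i-N)p$ in \eqref{eq:3termrec}, keeping track of the normalization $\binom{N}{i}(1-p)^{N-i}p^i$ or equivalently $k_i$ that relates the raw basis expansion to the Krawtchouk polynomials via Lemma~\ref{lem:156}. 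The self-duality \eqref{eq:AW} guarantees that the roles of the index $i$ and the variable $x$ are interchangeable, so there is freedom in which variable carries the recurrence; I would exploit this to simplify the final coefficient comparison.
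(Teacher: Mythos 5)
Your argument is correct and is essentially the paper's own proof viewed from the other side of the duality. The paper applies the self-adjoint element $h=a^*$, which is diagonal on $\{y^{N-i}z^i\}_{i=0}^N$ and tridiagonal on $\{{y^*}^{N-i}{z^*}^i\}_{i=0}^N$ (via $h=2(1-p)e^*+(1-2p)h^*+2pf^*$ from Lemma \ref{lem:132}(iii) together with Lemma \ref{lem:145pre}), and then matches the two evaluations using Lemmas \ref{lem:149} and \ref{lem:157}; you instead apply the self-adjoint operator $A=(NI-a)/2$, which is tridiagonal on $\{y^{N-i}z^i\}_{i=0}^N$ with coefficients $j(p-1)$, $-(j(p-1)+(j-N)p)$, $(j-N)p$ and acts on ${y^*}^{N-x}{z^*}^x$ with eigenvalue $x$. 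The two computations are mirror images under the map $*$; yours has the minor advantage that the tridiagonal shift lands on the first argument of $\b{y^{N-j}z^j,{y^*}^{N-x}{z^*}^x}=K_j(x)$, so the self-duality \eqref{eq:AW} is never invoked, whereas the paper must convert $K_x(i\pm 1)$ into $K_{i\pm 1}(x)$. Two small tidying remarks: the scaffolding in your first two paragraphs (Theorem \ref{thm:158}, the commutator $[A^*,A]$, the Askey--Wilson relation) plays no role in the final argument and should be cut; and no bookkeeping with $k_i$ or $\binom{N}{i}(1-p)^{N-i}p^i$ is needed at the end, since Lemma \ref{lem:157} already gives the inner products as bare Krawtchouk values.
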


\begin{proof}
By Lemma \ref{lem:149} and $h^\dagger=h$,
\begin{equation}    \label{eq:3termreca}
 \b{h.(y^{N-x}z^x),{y^*}^{N-i}{z^*}^i}
   = \b{y^{N-x}z^x,h.({y^*}^{N-i}{z^*}^i)}.
\end{equation}
We first evaluate the left-hand side of \eqref{eq:3termreca}.
To do this use Lemmas  \ref{lem:143pre} and \ref{lem:157}.
We now evaluate the right-hand side of \eqref{eq:3termreca}.
By Lemma \ref{lem:132}(iii),
\begin{equation}        \label{eq:3termrecaux}
h = 2(1-p)e^* + (1-2p)h^* + 2pf^*.
\end{equation}
Evaluate the right-hand side of \eqref{eq:3termreca} using \eqref{eq:3termrecaux},
and simplify the result using Lemmas \ref{lem:145pre}, \ref{lem:157}.
By these comments \eqref{eq:3termreca} reduces to
\[
 (N-2x)K_i(x)= 2(1-p)i K_{i-1}(x) + (1-2p)(N-2i)K_i(x)+ 2p(N-i)K_{i+1}(x).
\]
In this equation we rearrange terms to get \eqref{eq:3termrec}.
\end{proof}

\medskip

Krawtchouk polynomials satisfy the following
difference equation.

\medskip

\begin{theorem} {\rm \cite[Section 9.11]{KLS}}  \label{thm:diff}   \samepage
For $i,x=0,1,\ldots,N$,
\begin{equation}        \label{eq:diff}
 i K_i(x) = x(p-1) K_i(x-1) - (x(p-1) + (x-N)p)K_i(x) + (x-N)pK_i(x+1).
\end{equation}
\end{theorem}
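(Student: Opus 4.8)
The plan is to mirror the proof of Theorem \ref{thm:3termrec}, interchanging the roles of the two bases $\{y^{N-i}z^i\}_{i=0}^N$ and $\{{y^*}^{N-i}{z^*}^i\}_{i=0}^N$ for $V$. The governing observation is that the antiautomorphism $\dagger$ fixes $h$, by Lemma \ref{lem:139}. Hence, for fixed $i,x \in \{0,1,\ldots,N\}$, the self-adjointness in Lemma \ref{lem:149} gives
\[
 \b{h.(y^{N-i}z^i),\,{y^*}^{N-x}{z^*}^x} = \b{y^{N-i}z^i,\,h.({y^*}^{N-x}{z^*}^x)}.
\]
I would then evaluate each side with the help of Lemma \ref{lem:157}, which identifies such inner products with values of Krawtchouk polynomials.

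First I would treat the left-hand side. Since $h$ acts diagonally on $y^{N-i}z^i$ with eigenvalue $N-2i$ by Lemma \ref{lem:143pre}, the left-hand side equals $(N-2i)K_i(x)$. Next I would treat the right-hand side, where the idea is to let $h$ act on the starred basis; for this I would use the expansion $h = 2(1-p)e^* + (1-2p)h^* + 2pf^*$ from \eqref{eq:3termrecaux} (equivalently Lemma \ref{lem:132}(iii)). The elements $e^*,h^*,f^*$ act tridiagonally on $\{{y^*}^{N-x}{z^*}^x\}$ by Lemma \ref{lem:145pre}, respectively lowering, fixing, and raising the index $x$, and evaluating the three resulting inner products by Lemma \ref{lem:157} produces
\[
 2(1-p)x\,K_i(x-1) + (1-2p)(N-2x)K_i(x) + 2p(N-x)K_i(x+1).
\]
Equating the two evaluations and rearranging terms then yields \eqref{eq:diff}.

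The only genuine work is the final bookkeeping rearrangement, which is routine and formally identical to the one closing the proof of Theorem \ref{thm:3termrec}. Indeed, the computation above is precisely that proof with $i$ and $x$ interchanged, so an alternative and shorter route is simply to observe that \eqref{eq:diff} arises from \eqref{eq:3termrec} by swapping $i \leftrightarrow x$ and then applying the self-duality \eqref{eq:AW} term by term. The one point needing care in that shortcut is the boundary behavior: at $x=0$ the term involving $K_i(x-1)$ and at $x=N$ the term involving $K_i(x+1)$ refer to out-of-range arguments, but in each case the accompanying coefficient $x(p-1)$ or $(x-N)p$ vanishes, so \eqref{eq:AW} need only be invoked for in-range arguments and no extension of the definition of $K_i$ is required.
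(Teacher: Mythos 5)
Your proposal is correct, and your ``alternative and shorter route'' at the end is precisely the paper's proof: Theorem \ref{thm:diff} is obtained from \eqref{eq:3termrec} by exchanging $i$ and $x$ and invoking the self-duality \eqref{eq:AW}. Your care with the boundary terms (the out-of-range $K_i(-1)$ and $K_i(N+1)$ being multiplied by vanishing coefficients $x(p-1)$ and $(x-N)p$) is a point the paper leaves implicit, and it is handled correctly. Your primary route --- evaluating $\b{h.(y^{N-i}z^i),{y^*}^{N-x}{z^*}^x}$ both ways using Lemmas \ref{lem:149}, \ref{lem:143pre}, \ref{lem:145pre}, \ref{lem:157} and the expansion \eqref{eq:3termrecaux} --- is also valid; it amounts to re-running the proof of Theorem \ref{thm:3termrec} with the roles of the two bases interchanged, which makes the duality underlying the one-line argument explicit but costs a full computation where the paper needs none.
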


\begin{proof}
In \eqref{eq:3termrec} exchange $i$ and $x$,
and use \eqref{eq:AW}.
\end{proof}

\medskip

Krawtchouk polynomials have the following generating
function.

\medskip

\begin{theorem}  {\rm \cite[Section 9.11]{KLS}}    \label{thm:gen}   \samepage
Let $t$ denote an indeterminate.
Then for $x=0,1,\ldots,N$,
\begin{equation}    \label{eq:gen}
 \left( 1 - \frac{1-p}{p} t \right)^x (1+t)^{N-x}
 =  \sum_{i=0}^N \binom{N}{i} K_i(x) t^i.
\end{equation}
\end{theorem}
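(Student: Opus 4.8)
The plan is to derive the generating function identity \eqref{eq:gen} from the change-of-basis relation \eqref{eq:lem156b} by treating it as an identity in the polynomial algebra $\cal A$ and extracting the coefficient of a suitable monomial. Recall from Lemma~\ref{lem:156} that
\[
 y^{N-x}z^x = \sum_{i=0}^N \binom{N}{i}\left(\frac{p}{1-p}\right)^i K_i(x)\, {y^*}^{N-i}{z^*}^i,
\]
so the Krawtchouk polynomials $K_i(x)$ already appear as the coordinates of the monomial $y^{N-x}z^x$ in the starred basis. The right-hand side of \eqref{eq:gen} is exactly $\sum_i \binom{N}{i} K_i(x)\,t^i$, which strongly suggests that the indeterminate $t$ should play the role of $\frac{p}{1-p}$ times a formal bookkeeping variable attached to the factor ${y^*}^{N-i}{z^*}^i$.

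First I would make the substitution explicit: in \eqref{eq:lem156b} replace the formal symbols so that each occurrence of the pair $y^*, z^*$ is weighted to produce the power $t^i$ together with the factor $\left(\frac{1-p}{p}\right)^i$ needed to cancel the $\left(\frac{p}{1-p}\right)^i$ already present, leaving precisely $\binom{N}{i}K_i(x)t^i$. Concretely, the cleanest route is to substitute specific numerical values for $y^*$ and $z^*$ (equivalently for $y$ and $z$) that realize this weighting. Using the linear relations \eqref{eq:yz}, namely $y = y^* + \frac{p}{1-p}z^*$ and $z = y^* - z^*$, one can choose values of $y^*,z^*$ so that $\frac{p}{1-p}z^*/y^* = t$; then ${y^*}^{N-i}{z^*}^i$ specializes to $\left(\frac{1-p}{p}\right)^i t^i$ times a common power of $y^*$, and the left-hand monomial $y^{N-x}z^x$ specializes to an explicit product. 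Carrying out the bookkeeping, the common factor ${y^*}^N$ divides out and the left-hand side becomes $\left(1-\frac{1-p}{p}t\right)^x(1+t)^{N-x}$, matching \eqref{eq:gen}.

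The main obstacle will be choosing the specialization of $y^*, z^*$ correctly and tracking the resulting factors of $\frac{p}{1-p}$ without sign or reciprocal errors. The natural choice is $z^* = 1$ and $y^* = \frac{p}{1-p}\,t^{-1}$, so that $\frac{p}{1-p}z^*/y^* = t$; then by \eqref{eq:yz} one computes $y = y^* + \frac{p}{1-p}z^* = \frac{p}{1-p}(t^{-1}+1)$ and $z = y^* - z^* = \frac{p}{1-p}t^{-1} - 1$. Substituting these into $y^{N-x}z^x$ and into ${y^*}^{N-i}{z^*}^i = \left(\frac{p}{1-p}\right)^{N-i}t^{-(N-i)}$, and dividing both sides of \eqref{eq:lem156b} by the common factor $\left(\frac{p}{1-p}\right)^N t^{-N}$, the $\binom{N}{i}$ and the factor $\left(\frac{p}{1-p}\right)^i$ on the right combine with $\left(\frac{p}{1-p}\right)^{N-i}t^{-(N-i)}$ to leave exactly $\binom{N}{i}K_i(x)t^i$, while the left-hand side collapses to $\left(1+t\right)^{N-x}\left(1-\frac{1-p}{p}t\right)^{x}$. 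The verification is purely algebraic, requiring only that $p\neq 0,1$ so that $\frac{p}{1-p}$ is invertible, which is already in force.

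As an alternative that avoids any division by $t$, I could instead prove \eqref{eq:gen} by comparing coefficients of $t^i$ on both sides: expand the left-hand side $\left(1-\frac{1-p}{p}t\right)^x(1+t)^{N-x}$ by the binomial theorem, collect the coefficient of $t^i$, and recognize the resulting finite sum as the hypergeometric expression $\binom{N}{i}K_i(x)$ from \eqref{eq:Kn}. This reduces the theorem to a Vandermonde-type summation identity for the $_2F_1$. I expect the substitution approach via \eqref{eq:lem156b} to be shorter and more in the spirit of the paper, since it reuses the module-theoretic change-of-basis formula rather than re-deriving a hypergeometric identity by hand; the coefficient-comparison approach would be the fallback if the specialization bookkeeping proves error-prone.
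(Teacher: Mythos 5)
Your proposal is correct and is essentially the paper's own argument: the paper proves \eqref{eq:gen} by specializing Lemma \ref{lem:156}, taking \eqref{eq:lem156a} with $y=\frac{1}{1-p}$ and $z=\frac{t}{p}$ so that by \eqref{eq:yszs} one gets $y^*=1+t$ and $z^*=1-\frac{1-p}{p}t$, whence the identity drops out with no division. Your variant via \eqref{eq:lem156b} with $z^*=1$ and $y^*=\frac{p}{1-p}\,t^{-1}$ also works, but it requires passing to $\F[t,t^{-1}]$ and clearing the common factor $\bigl(\tfrac{p}{1-p}\bigr)^{N}t^{-N}$, which the paper's choice of specialization avoids.
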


\begin{proof}
We apply \eqref{eq:lem156a} with $y=\frac{1}{1-p}$ and $z=\frac{t}{p}$.
Using \eqref{eq:yszs} we find $y^*=1+t$ and $z^* = 1 - \frac{1-p}{p}t$.
The result follows.
\end{proof}

\section{Description by matrices}
\label{sec:matrices}

\indent
In Section \ref{sec:sl2Krawt} we used a certain $L$-module $V$ to 
describe Krawtchouk polynomials $\{K_i(x)\}_{i=0}^N$.
In this section we summarize our results in matrix form.

\medskip
We comment on the notation. 
Recall that $\text{Mat}_{N+1}(\F)$ denotes the $\F$-algebra consisting of
all $(N+1) \times (N+1)$ matrices with entries in $\F$.
From now on,
we adopt the convention that for each matrix in this algebra 
the rows and columns are indexed by $0,1,\ldots, N$.

\medskip

\begin{definition}    \label{def:115}     \samepage
Define matrices 
$U$, $B$, $D$, $K$ in $\text{Mat}_{N+1}(\F)$ as follows.
For $i,j=0,1,\ldots,N$ the $(i,j)$-entry of $U$ is $K_i(j)$.
The matrix $B$ is tridiagonal:
\[
B = 
 \begin{pmatrix}
  a_0 & b_0 & & & & \text{\bf 0} \\
  c_1 & a_1 & b_1 \\
      & c_2 & \cdot & \cdot \\
      &     & \cdot & \cdot & \cdot \\
      &     &       & \cdot & \cdot & b_{N-1} \\
  \text{\bf 0} & &  &       & c_N   & a_N
 \end{pmatrix},
\]
where 
\begin{align}   \label{eq:aibici}
 c_i &= i(p-1), &  b_i &= (i-N)p, & a_i &= -b_i-c_i.
\end{align}
The matrix $D$ is diagonal with $(i,i)$-entry $i$ for $i=0,1,\ldots,N$.
The matrix $K$ is diagonal with $(i,i)$-entry $k_i$ for $i=0,1,\ldots,N$,
where $\{k_i\}_{i=0}^N$ are from \eqref{eq:defki}.
\end{definition}

\begin{note}
Referring to \eqref{eq:defki} and \eqref{eq:aibici},
\begin{align}   \label{eq:ki}
 k_i &= \frac{b_0b_1\cdots b_{i-1}}{c_1c_2\cdots c_i},
 & & i=0,1,\ldots,N.
\end{align}
\end{note}

\begin{theorem}   \label{thm:116}
With reference to Definition {\rm \ref{def:115}} the following hold.
\begin{itemize}
\item[\rm (i)]
$U^t=U$.
\item[\rm (ii)]
$B^t=KBK^{-1}$.
\item[\rm (iii)]
$UD=BU$.
\item[\rm (iv)]
$DU=UB^t$.
\item[\rm (v)]
$(1-p)^N UKUK = I$.
\end{itemize}
\end{theorem}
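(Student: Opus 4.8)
The plan is to verify each of (i)--(v) by an entrywise computation, in each case invoking a Krawtchouk identity already proved in Section~\ref{sec:sl2Krawt} together with the combinatorial relation \eqref{eq:ki} among $k_i$, $b_i$, $c_i$. Parts (i)--(iv) are essentially restatements of earlier results, so I expect them to be short; the real content is in (v).

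For (i), the assertion $U^t=U$ is the statement $K_j(i)=K_i(j)$, which is exactly the self-duality \eqref{eq:AW}. For (ii), I would compare the two tridiagonal matrices $B^t$ and $KBK^{-1}$ entry by entry: they share the diagonal $a_i$, and on the superdiagonal $(B^t)_{i,i+1}=c_{i+1}$ while $(KBK^{-1})_{i,i+1}=k_ib_ik_{i+1}^{-1}$, so equality reduces to $k_{i+1}c_{i+1}=k_ib_i$, which is \eqref{eq:ki}; the subdiagonal gives the same relation. For (iii), the $(i,j)$-entry of $UD$ is $jK_i(j)$, and since $B$ is tridiagonal the $(i,j)$-entry of $BU$ is $c_iK_{i-1}(j)+a_iK_i(j)+b_iK_{i+1}(j)$; setting $x=j$ in the three-term recurrence \eqref{eq:3termrec} and reading off $c_i,a_i,b_i$ from \eqref{eq:aibici} shows these agree, so $UD=BU$. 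For (iv), I would simply transpose (iii): from $UD=BU$ we get $D^tU^t=U^tB^t$, and since $D^t=D$ and $U^t=U$ by (i), this is $DU=UB^t$.

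The substantive step is (v), which I would prove from the orthogonality relation \eqref{eq:thm114a} together with self-duality. Because $K$ is diagonal, $(UK)_{ik}=k_kK_i(k)$, so
\[
 (UKUK)_{ij}=\sum_{k=0}^N (UK)_{ik}(UK)_{kj}
   = k_j\sum_{k=0}^N k_k\,K_i(k)\,K_k(j).
\]
Applying \eqref{eq:AW} to replace $K_k(j)$ by $K_j(k)$ turns the sum into $k_j\sum_k k_k K_i(k)K_j(k)$. The main obstacle is then purely bookkeeping: matching this sum to \eqref{eq:thm114a}. Here I would use the two consequences of \eqref{eq:defki}, namely $\binom{N}{k}p^k(1-p)^{N-k}=(1-p)^N k_k$ for the weight and $\binom{N}{i}^{-1}\bigl((1-p)/p\bigr)^i=k_i^{-1}$ for the normalizing constant, to rewrite \eqref{eq:thm114a} as $\sum_k k_k K_i(k)K_j(k)=\delta_{i,j}(1-p)^{-N}k_i^{-1}$. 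Substituting this back yields $(UKUK)_{ij}=k_j\delta_{i,j}(1-p)^{-N}k_i^{-1}=\delta_{i,j}(1-p)^{-N}$, since $k_j/k_i=1$ when $i=j$. Hence $(1-p)^N UKUK=I$, as claimed.

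Overall I expect (i)--(iv) to be immediate given the cited results, with the only genuine care needed in (v) to align the weight $\binom{N}{k}p^k(1-p)^{N-k}$ and the constant on the right of \eqref{eq:thm114a} with the diagonal entries $k_i$ of $K$.
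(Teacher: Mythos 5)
Your proof is correct and follows essentially the same route as the paper: (i) is self-duality \eqref{eq:AW}, (ii) is the relation \eqref{eq:ki}, (iii) is the three-term recurrence \eqref{eq:3termrec} in matrix form, and (v) is the orthogonality relation \eqref{eq:thm114a} rewritten via $\binom{N}{k}p^k(1-p)^{N-k}=(1-p)^Nk_k$ and $\binom{N}{i}^{-1}\bigl((1-p)/p\bigr)^i=k_i^{-1}$. For (iv) the paper cites the difference equation \eqref{eq:diff} directly, but since that equation is itself obtained from the three-term recurrence by swapping $i$ and $x$ and invoking \eqref{eq:AW}, your derivation by transposing (iii) and using (i) is the same argument expressed at the matrix level.
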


\begin{proof}
(i):
By \eqref{eq:AW}.

(ii):
One routinely checks $KB=B^tK$ by matrix multiplication,
using the tridiagonal shape of $B$ and \eqref{eq:ki}.

(iii):
This is the three-term recurrence  \eqref{eq:3termrec} in matrix form.

(iv):
This is the difference equation  \eqref{eq:diff} in matrix form.

(v):
This is the orthogonality relation  \eqref{eq:thm114a} in matrix form.
\end{proof}

\medskip

Sometimes it is convenient to work with the following matrix.

\medskip

\begin{definition}       \label{def:P}
Define $P=UK$ where the matrices $U,K$ are from 
Definition \ref{def:115}.
\end{definition}

\medskip

Theorem \ref{thm:116} looks as follows in terms of $P$.

\medskip

\begin{theorem}   \samepage
With reference to Definitions {\rm  \ref{def:115}} and {\rm \ref{def:P}}
the following hold:
\begin{itemize}
\item[\rm (i)]
 $P^t = KPK^{-1}$.
\item[\rm (ii)]
 $B^t = KBK^{-1}$.
\item[\rm (iii)]
 $PD = BP$.
\item[\rm (iv)]
 $PB = DP$.
\item[\rm (v)]
 $P^2 = (1-p)^{-N}I$.
\end{itemize}
\end{theorem}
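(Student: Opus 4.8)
The plan is to derive each of the five identities directly from the corresponding parts of Theorem \ref{thm:116} by substituting $P=UK$ and manipulating using the fact that $K$ is diagonal (hence $K^t=K$) together with the already-established relations. Since every needed ingredient is at hand, the proof is essentially a short sequence of algebraic substitutions, and I expect no serious obstacle; the only thing requiring care is bookkeeping of which diagonal matrices commute and correctly transposing products.

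For part (i), I would compute $P^t=(UK)^t=K^tU^t=KU$ using $U^t=U$ from Theorem \ref{thm:116}(i) and $K^t=K$. Then I would check $KPK^{-1}=KUKK^{-1}=KU$, so the two sides agree. Part (ii) is simply a restatement of Theorem \ref{thm:116}(ii) and requires no further work. For part (iii), starting from $PD=UKD$, I would use that $K$ and $D$ are both diagonal, hence commute, to write $UKD=UDK$; then Theorem \ref{thm:116}(iii) gives $UD=BU$, so $UDK=BUK=BP$, as desired.

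For part (iv), I would begin with $PB=UKB$ and invoke the relation $KB=B^tK$ (established inside the proof of Theorem \ref{thm:116}(ii)) to get $UKB=UB^tK$; then Theorem \ref{thm:116}(iv) gives $UB^t=DU$, so $UB^tK=DUK=DP$. Finally, for part (v), I would compute $P^2=UKUK$ and apply Theorem \ref{thm:116}(v), which states $(1-p)^N UKUK=I$, to conclude $P^2=(1-p)^{-N}I$.

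The main thing to be careful about is the transpose-and-commute step in parts (i) and (iv): part (i) uses both the symmetry $U^t=U$ and the commutation built into the diagonal structure of $K$, while part (iv) relies on the intertwining relation $KB=B^tK$ rather than on $K$ commuting with $B$ (which it does not). Once these two observations are in place, the remaining parts follow by routine substitution, and the whole argument is short.
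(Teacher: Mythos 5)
Your proof is correct and takes essentially the same approach as the paper, which simply says to eliminate $U$ via $U=PK^{-1}$ in Theorem \ref{thm:116}; your substitution $P=UK$ is the same computation read in the other direction, with the relevant details ($U^t=U$, $K^t=K$, $KD=DK$, $KB=B^tK$) correctly spelled out.
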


\begin{proof}
In Theorem \ref{thm:116} eliminate $U$ using $U=PK^{-1}$.
\end{proof}        

\medskip

In Section \ref{sec:sl2Krawt} we encountered the following bases for $V$:
\begin{align}
 & \{y^{N-i}z^i\}_{i=0}^N   & & \{y^{N-i}z^i k_i(1-p)^N\}_{i=0}^N
                                                \label{eq:basis1}
\\
 & \{{y^*}^{N-i}{z^*}^i\}_{i=0}^N  & & \{{y^*}^{N-i}{z^*}^ik_i\}_{i=0}^N 
                                                 \label{eq:basis2}
\end{align}
On each line \eqref{eq:basis1}, \eqref{eq:basis2}  the two
bases on that line are dual with respect to $\b{\;,\;}$. 
We now give the transition matrices between the 
four bases in \eqref{eq:basis1}, \eqref{eq:basis2}.

\medskip

\begin{lemma}
In the diagram below we display the transition matrices between 
the four bases in \eqref{eq:basis1}, \eqref{eq:basis2}:
\begin{center}
\scriptsize
\begin{pspicture}(0,-2.5)(10,7)
\psset{xunit=0.7cm,yunit=0.7cm}
\rput(0,0){\rnode{A}{$\{{y^*}^{N-i}{z^*}^i\}_{i=0}^N$}}
\rput(10,0){\rnode{B}{$\;\{{y^*}^{N-i}{z^*}^ik_i\}_{i=0}^N$}}
\rput(0,8){\rnode{C}{$\{y^{N-i}z^i\}_{i=0}^N$}}
\rput(10,8){\rnode{D}{$\;\{y^{N-i}z^ik_i(1-p)^N\}_{i=0}^N$}}
\psline[arrowscale=1.5]{->}(1.5,0)(3,0)
\psline{-}(3,0)(7,0)
\psline[arrowscale=1.5]{<-}(6.5,0)(8.3,0)
\rput(3,-0.5){$K$}
\rput(7,-0.5){$K^{-1}$}
\psline[arrowscale=1.5]{->}(0,0.6)(0,2)
\psline{-}(0,2)(0,6)
\psline[arrowscale=1.5]{<-}(0,6)(0,7.4)
\rput(-1,2){$KU$}
\rput(-1.5,6){$KU(1-p)^N$}
\psline[arrowscale=1.5]{->}(1.5,8)(3,8)
\psline{-}(3,8)(7,8)
\psline[arrowscale=1.5]{<-}(6.5,8)(7.6,8)
\rput(3,8.5){$K(1-p)^N$}
\rput(7,8.5){$K^{-1}(1-p)^{-N}$}
\psline[arrowscale=1.5]{->}(10,0.6)(10,2)
\psline{-}(10,2)(10,6)
\psline[arrowscale=1.5]{<-}(10,6)(10,7.4)
\rput(11.5,2){$UK(1-p)^N$}
\rput(11,6){$UK$}
\psline[arrowscale=1.5]{->}(0.75,0.6)(2,1.6)
\psline{-}(2,1.6)(4.8,3.84)
\psline{-}(5.2,4.16)(8,6.4)
\psline[arrowscale=1.5]{<-}(8,6.4)(9.4,7.52)
\psline[arrowscale=1.5]{->}(0.6,7.52)(2,6.4)
\psline{-}(2,6.4)(8,1.6)
\psline[arrowscale=1.5]{<-}(8,1.6)(9.4,0.48)
\rput(4,1.6){$KUK(1-p)^N$}
\rput(4,6.4){$KUK(1-p)^N$}
\rput(8.5,2){$U$}
\rput(8.5,6){$U$}
\rput(5,-1.5){\rm \normalsize Transition matrices}
\rput(5,-2.5){%
$\{u_i\}_{i=0}^N \xrightarrow{\;\;\;M\;\;\;} \{v_i\}_{i=0}^N$
{\rm means} $v_j=\sum_{i=0}^N M_{ij}u_i$ $(0 \leq j \leq N)$}
\end{pspicture}
\end{center}
\end{lemma}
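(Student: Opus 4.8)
The plan is to reduce all twelve arrows in the diagram to two ingredients: the trivial diagonal rescalings along the horizontal edges, and the single substantive identity already supplied by Lemma~\ref{lem:156}. Once these are in hand, every remaining arrow is obtained by composing transition matrices, using the multiplication rule recorded in the paragraph preceding Lemma~\ref{lem:132}: if $T$ sends $\{u_i\}_{i=0}^N$ to $\{v_i\}_{i=0}^N$ and $S$ sends $\{v_i\}_{i=0}^N$ to $\{w_i\}_{i=0}^N$, then $TS$ sends $\{u_i\}_{i=0}^N$ to $\{w_i\}_{i=0}^N$. For bookkeeping I would name the four bases $\mathcal{B}_1=\{y^{N-i}z^i\}_{i=0}^N$ (top left of \eqref{eq:basis1}), $\mathcal{B}_2=\{y^{N-i}z^ik_i(1-p)^N\}_{i=0}^N$ (top right), $\mathcal{B}_3=\{{y^*}^{N-i}{z^*}^i\}_{i=0}^N$ (bottom left of \eqref{eq:basis2}), and $\mathcal{B}_4=\{{y^*}^{N-i}{z^*}^ik_i\}_{i=0}^N$ (bottom right).

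First I would treat the horizontal edges. On line \eqref{eq:basis1} the $i$th vector of $\mathcal{B}_2$ is $k_i(1-p)^N$ times the $i$th vector of $\mathcal{B}_1$, and on line \eqref{eq:basis2} the $i$th vector of $\mathcal{B}_4$ is $k_i$ times that of $\mathcal{B}_3$. Since a diagonal rescaling of a basis has the corresponding diagonal matrix as its transition matrix, the top edge $\mathcal{B}_1\to\mathcal{B}_2$ is $K(1-p)^N$ and the bottom edge $\mathcal{B}_3\to\mathcal{B}_4$ is $K$, with inverses $K^{-1}(1-p)^{-N}$ and $K^{-1}$; this settles all four horizontal arrows.

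Next I would handle the left vertical edge, which is the only place where genuine computation enters and where the work is already done. Reading \eqref{eq:lem156b} and recalling from Definition~\ref{def:115} that $(KU)_{ij}=k_iK_i(j)$, I see at once that the transition $\mathcal{B}_3\to\mathcal{B}_1$ is $KU$. For the reverse direction I would use \eqref{eq:lem156a} together with the identity $\binom{N}{i}(1-p)^{N-i}p^i=k_i(1-p)^N$, which is immediate from \eqref{eq:defki}; this rewrites the coefficient in \eqref{eq:lem156a} as $k_i(1-p)^N K_i(j)$, whence $\mathcal{B}_1\to\mathcal{B}_3$ is $KU(1-p)^N$. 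That these two are genuinely mutually inverse is precisely the relation $(1-p)^N UKUK=I$ of Theorem~\ref{thm:116}(v), so no separate verification is needed.

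Finally I would produce the remaining arrows — the right vertical edge and both cross diagonals, in each direction — purely by composition. The right edge factors as $\mathcal{B}_2\to\mathcal{B}_1\to\mathcal{B}_3\to\mathcal{B}_4$, giving
\[
 \bigl(K^{-1}(1-p)^{-N}\bigr)\bigl(KU(1-p)^N\bigr)(K)=UK,
\]
and its reverse $\mathcal{B}_4\to\mathcal{B}_2$ is then $(UK)^{-1}=(1-p)^N UK$, again by Theorem~\ref{thm:116}(v). For the diagonals I would compute
\[
 \mathcal{B}_3\to\mathcal{B}_2=(KU)\bigl(K(1-p)^N\bigr)=KUK(1-p)^N,\qquad
 \mathcal{B}_1\to\mathcal{B}_4=\bigl(K(1-p)^N\bigr)(UK)=KUK(1-p)^N,
\]
while the two reverse diagonals collapse:
\[
 \mathcal{B}_2\to\mathcal{B}_3=\bigl(K^{-1}(1-p)^{-N}\bigr)\bigl(KU(1-p)^N\bigr)=U,\qquad
 \mathcal{B}_4\to\mathcal{B}_1=(K^{-1})(KU)=U,
\]
matching the diagram. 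The only real obstacle is bookkeeping: keeping straight which basis is source and which is target at each arrow, and simplifying the products of $K$, $U$, and scalar powers of $(1-p)$. Beyond the one honest simplification $\binom{N}{i}(1-p)^{N-i}p^i=k_i(1-p)^N$ used to recognize $KU$ inside Lemma~\ref{lem:156}, there is no conceptual difficulty, since all the analytic content already resides in Lemma~\ref{lem:156} and Theorem~\ref{thm:116}.
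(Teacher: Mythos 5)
Your proposal is correct and follows essentially the same route as the paper: the horizontal edges are immediate diagonal rescalings, the left vertical edge comes from rewriting \eqref{eq:lem156a} and \eqref{eq:lem156b} via \eqref{eq:defki} and Definition \ref{def:115}, and all remaining arrows are obtained by composing transition matrices as in the comments above Lemma \ref{lem:132}. The computations you carry out (including the use of Theorem \ref{thm:116}(v) for the inverses) all check out against the diagram.
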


\begin{proof}
To get the transition matrices along the left vertical line,
reformulate  \eqref{eq:lem156a} and  \eqref{eq:lem156b}
using \eqref{eq:defki} and Definition \ref{def:115}.
The transition matrices along the two horizontal lines are
immediate from Definition \ref{def:115}.
The remaining matrices are obtained using the comments
above Lemma \ref{lem:132}.
\end{proof}
 
\medskip

We now give the inner products between the four bases in
\eqref{eq:basis1}, \eqref{eq:basis2}.

\medskip

\begin{lemma}
In the diagram below we display the inner products between 
the four bases in \eqref{eq:basis1}, \eqref{eq:basis2}:
\begin{center}
\scriptsize
\begin{pspicture}(0,-4)(10,8)
\psset{xunit=0.7cm,yunit=0.7cm}
\rput(0,0){\rnode{A}{$\{{y^*}^{N-i}{z^*}^i\}_{i=0}^N$}}
\rput(10,0){\rnode{B}{$\;\{{y^*}^{N-i}{z^*}^ik_i\}_{i=0}^N$}}
\rput(0,8){\rnode{C}{$\{y^{N-i}z^i\}_{i=0}^N$}}
\rput(10,8){\rnode{D}{$\;\{y^{N-i}z^ik_i(1-p)^N\}_{i=0}^N$}}
\psline[arrowscale=1.5]{-}(1.5,0)(8.3,0)
\rput(5,-0.5){$I$}
\psline[arrowscale=1.5]{-}(0,0.6)(0,7.4)
\rput(-1,4){$U$}
\psline[arrowscale=1.5]{-}(1.5,8)(7.6,8)
\rput(5,8.5){$I$}
\psline[arrowscale=1.5]{-}(10,0.6)(10,7.4)
\rput(11.5,4){$KUK(1-p)^N$}
\psline[arrowscale=1.5]{->}(0.75,0.6)(2,1.6)
\psline{-}(2,1.6)(4.8,3.84)
\psline{-}(5.2,4.16)(8,6.4)
\psline[arrowscale=1.5]{<-}(8,6.4)(9.4,7.52)
\psline[arrowscale=1.5]{->}(0.6,7.52)(2,6.4)
\psline{-}(2,6.4)(8,1.6)
\psline[arrowscale=1.5]{<-}(8,1.6)(9.4,0.48)
\rput(4,1.6){$UK(1-p)^N$}
\rput(3,6.4){$UK$}
\rput(8.5,2){$KU$}
\rput(8.5,5.5){$KU(1-p)^N$}
\psbezier{-}(0,8.5)(0,10)(-2,10)(-1,8.5)
\psbezier{-}(10,8.5)(10,10)(12,10)(11,8.5)
\psbezier{-}(10,-0.5)(10,-2)(12,-2)(11,-0.5)
\psbezier{-}(0,-0.5)(0,-2)(-2,-2)(-1,-0.5)
\rput(1.5,9.5){$K^{-1}(1-p)^{-N}$}
\rput(9,9.5){$K(1-p)^N$}
\rput(9.5,-1.5){$K$}
\rput(0.5,-1.5){$K^{-1}$}
\rput(5,-2.5){\rm \normalsize Inner products}
\rput(5,-3.5){%
$\{u_i\}_{i=0}^N \xrightarrow{\;\;\;M\;\;\;} \{v_i\}_{i=0}^N$
{\rm means} $M_{ij}=\b{u_i,v_j}$ $(0 \leq i,j \leq N)$}
\rput(5,-4.5){%
{\rm The direction arrow is left off if} $M$ {\rm is symmetric}}
\end{pspicture}
\end{center}
\end{lemma}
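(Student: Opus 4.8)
The plan is to reduce every label in the diagram to one of three fundamental inner products and then propagate the diagonal rescalings by bilinearity. Each of the four bases in \eqref{eq:basis1}, \eqref{eq:basis2} is obtained from one of the two bases $\{y^{N-i}z^i\}_{i=0}^N$ and $\{{y^*}^{N-i}{z^*}^i\}_{i=0}^N$ by multiplying the $i$th vector by a scalar, namely by $k_i(1-p)^N$ and by $k_i$ for the two right-hand bases respectively. The three fundamental evaluations I would invoke are $\b{y^{N-i}z^i,y^{N-j}z^j}=\delta_{i,j}/(k_i(1-p)^N)$ from Definition \ref{def:148}, $\b{{y^*}^{N-i}{z^*}^i,{y^*}^{N-j}{z^*}^j}=\delta_{i,j}k_i^{-1}$ from Lemma \ref{lem:153}, and $\b{y^{N-i}z^i,{y^*}^{N-j}{z^*}^j}=K_i(j)$ from Lemma \ref{lem:157}, the last being the $(i,j)$-entry of $U$ (and using $U^t=U$ together with \eqref{eq:AW} to read this form in either order).

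The one general observation needed is a bookkeeping rule: if $\{u_i\}_{i=0}^N$ and $\{v_i\}_{i=0}^N$ have inner-product matrix $M$ (meaning $M_{ij}=\b{u_i,v_j}$), and we rescale to $u_i'=\lambda_i u_i$ and $v_j'=\mu_j v_j$, then bilinearity gives $\b{u_i',v_j'}=\lambda_i\mu_j M_{ij}$, so the new inner-product matrix is obtained from $M$ by pre-multiplying by $\text{diag}(\lambda_0,\dots,\lambda_N)$ and post-multiplying by $\text{diag}(\mu_0,\dots,\mu_N)$. Since every rescaling occurring here is by $K$ or by $K(1-p)^N$, both diagonal, each label in the diagram is computed by sandwiching the appropriate fundamental matrix between diagonal factors. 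Concretely, the two horizontal edges and the four corner loops arise from the two $\delta$-type forms: for instance the bottom edge $A\to B$ has base matrix $K^{-1}$ (Lemma \ref{lem:153}) post-multiplied by $K$, giving $I$, and the four loops come out as $K^{-1}(1-p)^{-N}$, $K(1-p)^N$, $K^{-1}$, $K$ by matching the scalars at each corner. The left edge, the right edge, and the four diagonals all arise from the $U$-type form: e.g. the right edge $B\to D$ gives $KUK(1-p)^N$, the $A\to D$ diagonal gives $UK(1-p)^N$, and the $C\to B$ diagonal gives $UK$.

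Finally I would dispose of the arrow convention. Because $\b{\;,\;}$ is symmetric (Definition \ref{def:148}) and $U^t=U$ (Theorem \ref{thm:116}(i)), reading an edge in the reverse direction transposes its matrix; since $K$ is diagonal, the labels $I$, $U$, and $KUK(1-p)^N$ are symmetric matrices, which is exactly why those three edges are drawn without an arrowhead, whereas the diagonals (whose two directions give $UK$ versus $KU$, and $UK(1-p)^N$ versus $KU(1-p)^N$) carry arrows. The whole argument is routine; the only points requiring care are the placement of the factor $(1-p)^N$ on the top-row bases and the direction convention $M_{ij}=\b{u_i,v_j}$ versus its transpose, so that each arrowhead points correctly. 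I anticipate no genuine obstacle beyond this bookkeeping.
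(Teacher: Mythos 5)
Your proposal is correct and follows essentially the same route as the paper: the paper's proof simply cites Definition \ref{def:148}, Lemmas \ref{lem:153} and \ref{lem:157}, and Definition \ref{def:115}, which amounts to exactly your reduction to the three fundamental inner products followed by diagonal rescaling via bilinearity. All of your individual matrix computations (edges, diagonals, and the four corner loops) check out against the diagram, including the arrow convention.
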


\begin{proof}
Follows from Definition \ref{def:148} 
and Lemmas \ref{lem:153}, \ref{lem:157} using
Definition \ref{def:115}.
\end{proof}

\medskip

Recall the linear transformations
$A:V \to V$ and $A^*:V \to V$ from \eqref{eq:defAAs}.

\medskip

\begin{lemma}   \label{lem:147}     \samepage
With respect to our four bases in \eqref{eq:basis1}, \eqref{eq:basis2}
the matrices representing $A$ and $A^*$ are given in the table below:
\[
\begin{array}{c|cccc}
\text{\rm basis} 
 & \{y^{N-i}z^i\}_{i=0}^N
 & \{y^{N-i}z^i k_i(1-p)^N\}_{i=0}^N
 & \{{y^*}^{N-i}{z^*}^ik_i\}_{i=0}^N
 & \{{y^*}^{N-i}{z^*}^i\}_{i=0}^N   \\ \hline
A   & B^t & B & D & D \\
A^* & D & D & B & B^t
\end{array}
\]
\end{lemma}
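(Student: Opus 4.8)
The plan is to reduce everything to two facts: $a=h^*$ by \eqref{eq:hs} and $a^*=h$ by \eqref{eq:aas}, together with $A=(NI-a)/2$ and $A^*=(NI-a^*)/2$. I would organize the four bases into two pairs, where \eqref{eq:basis2} is obtained from \eqref{eq:basis1} by the diagonal rescaling $y^{N-i}z^i\mapsto k_i(1-p)^N\,y^{N-i}z^i$ and likewise within the starred basis by $\mapsto k_i{y^*}^{N-i}{z^*}^i$. First I would settle the diagonal entries. The basis $\{{y^*}^{N-i}{z^*}^i\}_{i=0}^N$ diagonalizes $h^*=a$ with eigenvalue $N-2i$ on the $i$th vector (Lemma \ref{lem:145pre}), so $A$ acts there as the scalar $i$ and is represented by $D$; symmetrically $\{y^{N-i}z^i\}_{i=0}^N$ diagonalizes $h=a^*$ (Lemma \ref{lem:143pre}), so $A^*$ is represented by $D$ on that basis.

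Next I would compute the two genuinely tridiagonal representations directly, in the style of the proof of Theorem \ref{thm:3termrec}. Writing $a=h^*=2(1-p)e+(1-2p)h+2pf$ from \eqref{eq:hs} and applying this to $y^{N-i}z^i$ via Lemma \ref{lem:143pre} gives
\[
 a.(y^{N-i}z^i)=2(1-p)i\,y^{N-i+1}z^{i-1}+(1-2p)(N-2i)y^{N-i}z^i+2p(N-i)y^{N-i-1}z^{i+1}.
\]
Forming $A=(NI-a)/2$ and reading the matrix off column by column, the entry on $y^{N-i+1}z^{i-1}$ is $(p-1)i=c_i$, the entry on $y^{N-i}z^i$ is $i+p(N-2i)=a_i$, and the entry on $y^{N-i-1}z^{i+1}$ is $p(i-N)=b_i$; comparison with Definition \ref{def:115} and \eqref{eq:aibici} shows this matrix is $B^t$, so $A$ on \eqref{eq:basis1} is $B^t$. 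The mirror computation, expanding $a^*=h$ through \eqref{eq:3termrecaux} and applying Lemma \ref{lem:145pre}, gives $A^*$ on $\{{y^*}^{N-i}{z^*}^i\}_{i=0}^N$ equal to $B^t$ by identical bookkeeping.

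Finally I would obtain the remaining entries from the rescaling rule: replacing a basis $\{u_i\}$ by $\{\lambda_i u_i\}$ with $\Lambda=\mathrm{diag}(\lambda_i)$ conjugates every operator's matrix to $\Lambda^{-1}M\Lambda$. Passing from \eqref{eq:basis1} to \eqref{eq:basis2} the common factor $(1-p)^N$ cancels and the conjugator is $K$, so $A$ becomes $K^{-1}B^tK=B$ by Theorem \ref{thm:116}(ii); the same applies to $A^*$ in passing from the starred basis to its $k_i$-rescaling, giving $B$. The diagonal representations are unaffected by these rescalings since $D$ commutes with $K$, which yields $A=D$ and $A^*=D$ on the remaining starred and unstarred bases respectively; this accounts for all eight table entries. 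The only delicate point is the middle step: keeping the index shifts and the transpose straight so that the off-diagonal coefficients match $b_i,c_i$ in the correct positions (producing $B^t$ rather than $B$); everything else is routine rescaling and an appeal to Theorem \ref{thm:116}(ii). One could alternatively derive $A$ on \eqref{eq:basis1} by conjugating the eigenvalue matrix $D$ by the transition matrix of Lemma \ref{lem:156} and invoking Theorem \ref{thm:116}(iii), but the direct expansion is more self-contained.
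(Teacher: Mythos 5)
Your proposal is correct and follows essentially the same route as the paper's (very terse) proof: express $a,a^*$ in terms of $e,h,f$ and $e^*,h^*,f^*$ via \eqref{eq:aas} and \eqref{eq:3termrecaux}, read off the actions from Lemmas \ref{lem:143pre} and \ref{lem:145pre}, form $A=(NI-a)/2$, $A^*=(NI-a^*)/2$, and transfer to the rescaled bases using $B^t=KBK^{-1}$. The arithmetic identifying $c_i$, $a_i$, $b_i$ and the placement of the transpose are all correct.
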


\begin{proof}
Use \eqref{eq:aas}, \eqref{eq:defAAs} and Lemmas \ref{lem:143}, \ref{lem:145}
along with $B^t=KBK^{-1}$.
\end{proof}

\medskip

We now summarize the essential relationship between $A$ and $A^*$. 
We will use the following notion.
A tridiagonal matrix is said to be {\em irreducible} whenever
each entry on the subdiagonal is nonzero
and each entry on the superdiagonal is nonzero.
For example the tridiagonal matrix $B$ from Definition \ref{def:115} is irreducible.
Now consider the bases  $\{y^{N-i}z^i\}_{i=0}^N$ and $\{{y^*}^{N-i}{z^*}^i\}_{i=0}^N$ 
for $V$.
With respect to these bases the matrices representing $A$ and $A^*$ take the 
following form:
\[
 \begin{array}{c|c|c}
  \text{basis} & \text{matrix representing $A$} &
                 \text{matrix representing $A^*$} 
  \\ \hline
  \{y^{N-i}z^i\}_{i=0}^N & \text{irreducible tridiagonal} 
                         & \text{diagonal}  \\
  \{{y^*}^{N-i}{z^*}^i\}_{i=0}^N
    & \text{diagonal} & \text{irreducible tridiagonal}
 \end{array}
\]
In Sections \ref{sec:LP} and \ref{sec:LPKrawt} we investigate this relationship 
in a more abstract setting, 
using the notion of a Leonard pair.

\section{Leonard pairs}
\label{sec:LP}

\indent
In \cite{L} Doug Leonard characterized a family of orthogonal polynomials 
consisting of the $q$-Racah polynomials and their relatives.
This family is sometimes called the terminating branch of the 
Askey scheme \cite{KLS}, \cite[Section 24]{T:survey}.
In \cite{T:Leonard} the second author introduced the notion of a Leonard 
pair in order to clarify and simplify Leonard's characterization. 
We now define a Leonard pair.

\medskip

Throughout this section $\text{Char}(\F)$ will be arbitrary.
Let $V$ denote a vector space over $\F$ with finite positive dimension.

\medskip

\begin{definition}  \cite[Definition 1.1]{T:Leonard} \label{def:169} \samepage
By a {\em Leonard pair} on $V$ we mean an ordered pair of $\F$-linear
transformations $A:V \to V$ and $A^*:V \to V$ that satisfy
(i) and (ii) below.
\begin{itemize}
\item[(i)]
There exists a basis for $V$ with respect to which the matrix
representing $A$ is irreducible tridiagonal and the matrix
representing $A^*$ is diagonal.
\item[(ii)]
There exists a basis for $V$ with respect to which the matrix
representing $A$ is diagonal and the matrix
representing $A^*$ is irreducible tridiagonal.
\end{itemize}
By the {\em diameter} of the above Leonard pair we mean the dimension of $V$
minus one.
\end{definition}

\begin{note}   \samepage
Let $A,A^*$ denote a Leonard pair on $V$.
Then $A^*,A$ is a Leonard pair on $V$.
Also for $\alpha,\alpha^*,\beta,\beta^* \in \F$ 
with $\alpha \alpha^* \neq 0$,
the pair $\alpha A+\beta I$, $\alpha^* A^* + \beta^* I$
is a Leonard pair on $V$.
\end{note}

\medskip

The Leonard pairs are classified up to isomorphism \cite{T:Leonard,T:survey}. 
By that classification there is a natural correspondence between the 
Leonard pairs and the orthogonal polynomials that make up the terminating 
branch of the Askey scheme.
Krawtchouk polynomials are members of the terminating branch of the Askey scheme.
Our next general goal is to characterize the Leonard pairs that correspond 
to Krawtchouk polynomials.

\medskip

An element $A \in \text{End}(V)$ is said to be {\em multiplicity-free} 
whenever $A$ is diagonalizable and each eigenspace of $A$ has dimension one.

\medskip

\begin{lemma}   \label{lem:170}   \samepage
Let $A,A^*$ denote a Leonard pair on $V$.
Then each of $A,A^*$ is multiplicity-free.
\end{lemma}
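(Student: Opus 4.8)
The plan is to deduce the result from a single elementary fact about irreducible tridiagonal matrices, namely that each of their eigenspaces has dimension at most one. I would combine this with the observation that a Leonard pair supplies, for each of $A$ and $A^*$, both a basis in which it is diagonal (so that it is diagonalizable) and a basis in which it is irreducible tridiagonal (so that the above bound applies).

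First I would record the tridiagonal lemma. Let $B$ denote an irreducible tridiagonal matrix in $\text{Mat}_{d+1}(\F)$ with diagonal entries $a_0,\ldots,a_d$, subdiagonal entries $c_1,\ldots,c_d$, and superdiagonal entries $b_0,\ldots,b_{d-1}$, where the latter two sets of entries are all nonzero, and let $\lambda \in \F$. If a column vector $x$ with entries $x_0,\ldots,x_d$ satisfies $Bx=\lambda x$, then reading off the rows of this equation from top to bottom yields a three-term recurrence: the top row gives $b_0 x_1 = (\lambda - a_0)x_0$, and for $1 \le i \le d-1$ the $i$th row gives $b_i x_{i+1} = (\lambda - a_i)x_i - c_i x_{i-1}$. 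Since each $b_i \neq 0$, every entry of $x$ is determined by $x_0$. Hence the $\lambda$-eigenspace of $B$ is at most one-dimensional, so $B$ has all geometric multiplicities at most one.

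Next I would apply this to $A$. By Definition~\ref{def:169}(ii) there is a basis for $V$ in which $A$ is represented by a diagonal matrix, so $A$ is diagonalizable; in particular each eigenvalue of $A$ has equal algebraic and geometric multiplicity. By Definition~\ref{def:169}(i) there is a basis in which $A$ is represented by an irreducible tridiagonal matrix, so by the tridiagonal lemma each eigenspace of $A$ has dimension at most one. An eigenspace is nonzero by definition, so each eigenspace of $A$ has dimension exactly one, and $A$ is multiplicity-free. Interchanging the roles of the two defining conditions—using (i) to see that $A^*$ is diagonalizable and (ii) to see that $A^*$ is irreducible tridiagonal—gives the same conclusion for $A^*$.

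The argument is short, and the only real content is the tridiagonal recurrence. The one point that needs care is logical rather than computational: the eigenspace bound alone does not yield multiplicity-freeness. One must separately invoke diagonalizability, coming from the complementary defining condition, so that the geometric bound becomes an algebraic one and no generalized eigenvectors can intrude.
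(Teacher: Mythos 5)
Your proof is correct, but it reaches the conclusion by a different mechanism than the paper. You bound the geometric multiplicity directly: from the irreducible tridiagonal representation you run the three-term recurrence on the entries of an eigenvector, concluding that every eigenvector is determined by its top entry, so each eigenspace has dimension at most one; combined with diagonalizability from the complementary basis, this gives multiplicity-freeness. The paper instead computes the degree of the minimal polynomial: using the irreducible tridiagonal shape of the representing matrix $B$ it shows that $I,B,B^2,\ldots,B^N$ are linearly independent, so the minimal polynomial has degree $N+1=\dim V$; since $A$ is diagonalizable, the number of distinct eigenvalues equals $\dim V$ and each eigenspace is forced to be one-dimensional. Both arguments correctly isolate the logical point you flag at the end, namely that diagonalizability must be imported from the other defining condition. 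Your route is slightly more elementary and self-contained; the paper's route has the side benefit of establishing that $A$ has $N+1$ distinct eigenvalues and that the powers $\{A^i\}_{i=0}^N$ span the subalgebra generated by $A$, facts it reuses later (for instance in Proposition \ref{prop:174}).
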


\begin{proof}
Concerning $A$, by Definition \ref{def:169}(ii) there exists a basis 
for $V$ consisting of eigenvectors for $A$.
Therefore $A$ is diagonalizable.
We now show that each eigenspace of $A$ has dimension one.
To this end, we show that the number of the eigenspaces of $A$ 
is equal to the dimension of $V$. 
Note that the number of eigenspaces of $A$ is equal to the 
degree of the minimal polynomial of $A$. 
We now find this degree.
By Definition \ref{def:169}(i) there exists a basis for $V$ 
with respect to which the matrix representing $A$ 
is irreducible tridiagonal; denote this matrix by $B$. 
By construction $A$, $B$ have the same minimal polynomial. 
By the irreducible tridiagonal shape of $B$ we find that 
$I,B,B^2,\ldots,B^N$ are linearly independent, where $N= \dim V -1$. 
Therefore the minimal polynomial of $B$ has degree $N+1= \dim V$. 
By these comments the degree of the minimal polynomial of $A$ 
is equal to the dimension of $V$. 
Consequently each eigenspace of $A$ has dimension one, so
$A$ is multiplicity-free.
The case of $A^*$ is similar.
\end{proof}

\medskip

When working with a Leonard pair, it is often convenient
to consider a closely related object called a Leonard system.
In order to define this we first recall some concepts from linear algebra.
For the rest of this section set $N=\dim V-1$.
Let $A$ denote a multiplicity-free element of $\text{End}(V)$,
and let $\{\th_i\}_{i=0}^N$ denote an ordering of the eigenvalues
of $A$.
For $i=0,1,\ldots,N$ let $V_i$ denote the eigenspace of $A$
associated with $\th_i$.
So
\begin{align*}
  V &= \sum_{i=0}^N V_i   & & \text{(direct sum)}.
\end{align*}
For $i=0,1,\ldots,N$ define $E_i \in \text{End}(V)$ such that
$(E_i-I)V_i=0$ and $E_i V_j=0$ if $j \neq i$ $(j=0,1,\ldots,N)$.
We call $E_i$ the {\em primitive idempotent} of $A$ associated with $\th_i$.
Observe
(i) $E_iE_j = \delta_{i,j}E_i$ $(i,j=0,1,\ldots,N)$;
(ii) $I=\sum_{i=0}^N E_i$;
(iii) $A=\sum_{i=0}^N \th_i E_i$;
(iv) $V_i = E_i V$ $(i=0,1,\ldots,N)$.
Moreover
\begin{align}          \label{eq:Ei}
  E_i &= \prod_{\stackrel{0 \leq j \leq N}{j \neq i}}
      \frac{A-\th_j I}{\th_i-\th_j},
      & &  i=0,1,\ldots,N.
\end{align}
Let $\cal D$ denote the $\F$-subalgebra of $\text{End}(V)$ generated by $A$.
Observe that each of $\{A^i\}_{i=0}^N$ and $\{E_i\}_{i=0}^N$
is a basis for $\cal D$.
Moreover $\prod_{i=0}^N (A-\th_i I) = 0$.
Note that $\tr(E_i)=1$ and $\text{rank}(E_i)=1$
for $i=0,1,\ldots,N$.

\medskip

\begin{definition} {\rm \cite[Definition 1.4]{T:Leonard}} \label{def:171} \samepage
By a {\em Leonard system} on $V$ we mean a sequence
\[
  \Phi = (A, \{E_i\}_{i=0}^N, A^*, \{E^*_i\}_{i=0}^N)
\]
such that
\begin{itemize}
\item[(i)]
Each of $A,A^*$ is a multiplicity-free element of $\text{End}(V)$.
\item[(ii)]
$\{E_i\}_{i=0}^N$ is an ordering of the primitive idempotents of $A$.
\item[(iii)]
$\{E^*_i\}_{i=0}^N$ is an ordering of the primitive idempotents of $A^*$.
\item[(iv)]
$\displaystyle E_iA^*E_j =
  \begin{cases}
    0 & \text{ if $|i-j|>1$},    \\
    \neq 0 & \text{if $|i-j|=1$},
  \end{cases}
    \qquad\qquad i,j=0,1,\ldots,N
$.
\item[(v)]
$\displaystyle E^*_iAE^*_j =
  \begin{cases}
    0 & \text{ if $|i-j|>1$},    \\
    \neq 0 & \text{if $|i-j|=1$},
  \end{cases}
        \qquad\qquad i,j=0,1,\ldots,N
$.
\end{itemize}
\end{definition}

\medskip

Leonard pairs and Leonard systems are related as follows.
Let $A,A^*$ denote a Leonard pair on $V$.
Let $\{w_i\}_{i=0}^N$ denote a basis for $V$ from 
Definition \ref{def:169}(ii),
and let $\{w^*_i\}_{i=0}^N$ denote a basis for $V$ from 
Definition \ref{def:169}(i).
Each $w_i$ is an eigenvector for $A$; let $E_i$ denote the
corresponding primitive idempotent of $A$.
Each $w^*_i$ is an eigenvector for $A^*$; let $E^*_i$ denote the
corresponding primitive idempotent of $A^*$.
Then
$(A, \{E_i\}_{i=0}^N, A^*, \{E^*_i\}_{i=0}^N)$
is a Leonard system on $V$.
Conversely, let $\Phi=(A, \{E_i\}_{i=0}^N, A^*, \{E^*_i\}_{i=0}^N)$
denote a Leonard system on $V$.
For $i=0,1,\ldots,N$ pick nonzero vectors $w_i \in E_iV$
and $w^*_i \in E^*_iV$.
Then $\{w_i\}_{i=0}^N$ is a basis for $V$ that satisfies
Definition \ref{def:169}(ii), and
 $\{w^*_i\}_{i=0}^N$ is a basis for $V$ that satisfies
Definition \ref{def:169}(i).
Therefore $A,A^*$ is a Leonard pair on $V$.
We say the Leonard pair $A,A^*$ and the Leonard system $\Phi$
are {\em associated}.

\medskip

Let $\Phi=(A, \{E_i\}_{i=0}^N, A^*, \{E^*_i\}_{i=0}^N)$ denote a Leonard system
on $V$.
Using $\Phi$ there are several ways to get another Leonard system on $V$.
For instance, let $\alpha,\alpha^*,\beta,\beta^*$ denote scalars in $\F$ 
with $\alpha\alpha^* \neq 0$.
Then
\[
 (\alpha A+\beta I, \{E_i\}_{i=0}^N,
  \alpha^* A^* + \beta^* I, \{E^*_i\}_{i=0}^N)
\]
is a Leonard system on $V$.
Also each of the following is a Leonard system on $V$:
\begin{align*}
\Phi^* &= (A^*, \{E^*_i\}_{i=0}^N, A, \{E_i\}_{i=0}^N),
\\
\Phi^\Downarrow &= (A, \{E_{N-i}\}_{i=0}^N, A^*, \{E^*_i\}_{i=0}^N),
\\
\Phi^\downarrow &= (A, \{E_{i}\}_{i=0}^N, A^*, \{E^*_{N-i}\}_{i=0}^N).
\end{align*}

\medskip

Let $A,A^*$ denote a Leonard pair and let $\Phi$ denote an associated
Leonard system.
Then $A,A^*$ is associated with $\Phi$, $\Phi^\downarrow$,
$\Phi^{\Downarrow}$, $\Phi^{\downarrow\Downarrow}$,
and no other Leonard system.

\medskip

\begin{definition}   {\rm \cite[Definition 1.8]{T:Leonard}}
\label{def:172}   \samepage
Let $\Phi=(A, \{E_i\}_{i=0}^N, A^*, \{E^*_i\}_{i=0}^N)$ denote
a Leonard system on $V$.
For $i=0,1,\ldots,N$ let $\th_i$ (resp. $\th^*_i$) denote the
eigenvalue of $A$ (resp. $A^*$) associated with $E_i$
(resp. $E^*_i$).
We call $\{\th_i\}_{i=0}^N$ (resp. $\{\th^*_i\}_{i=0}^N$)
the {\em eigenvalue sequence}
(resp. {\em dual eigenvalue sequence}) of $\Phi$.
\end{definition}

\begin{definition} {\rm \cite[Definition 7.2]{T:LBUB}} \label{def:173} \samepage
Let $A,A^*$ denote a Leonard pair.
By an {\em eigenvalue sequence} (resp. {\em dual eigenvalue sequence})
of $A,A^*$ we mean the
eigenvalue sequence (resp. dual eigenvalue sequence) of
an associated Leonard system.
\end{definition}

\begin{note}   \samepage
Let $A,A^*$ denote a Leonard pair, with eigenvalue sequence
$\{\th_i\}_{i=0}^N$.
Then $\{\th_{N-i}\}_{i=0}^N$ is an eigenvalue sequence of $A,A^*$
and $A,A^*$ has no further eigenvalue sequence.
A similar comment applies to dual eigenvalue sequences.
\end{note}

\begin{definition}   \label{def:Phibasis}  \samepage
Let $\Phi=(A, \{E_i\}_{i=0}^N, A^*, \{E^*_i\}_{i=0}^N)$ denote
a Leonard system on $V$.
For $i=0,1,\ldots,N$ pick a nonzero $v_i \in E^*_iV$,
and note that $\{v_i\}_{i=0}^N$ is a basis for $V$.
We call such a basis a {\em $\Phi$-basis for} $V$.
\end{definition}

\begin{proposition}   \label{prop:174}   \samepage
Let  $(A, \{E_i\}_{i=0}^N, A^*, \{E^*_i\}_{i=0}^N)$ denote
a Leonard system on $V$, and let $\cal D$ denote the $\F$-subalgebra
of $\text{\rm End}(V)$ generated by $A$.
Then the $\F$-linear transformation 
${\cal D} \otimes {\cal D} \to \text{\rm End}(V)$,
$x \otimes y \mapsto x E^*_0 y$ is an isomorphism of $\F$-vector
spaces.
\end{proposition}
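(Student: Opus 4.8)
The plan is to reduce everything to a dimension count together with one structural fact. Since $A$ is multiplicity-free with $N+1$ eigenvalues, the primitive idempotents $\{E_i\}_{i=0}^N$ form a basis for $\cal D$ (as recorded before Definition \ref{def:171}), so $\dim {\cal D}=N+1$ and hence $\dim({\cal D}\otimes{\cal D})=(N+1)^2=\dim\text{\rm End}(V)$. Thus it suffices to prove the map is injective. As $\{E_i\otimes E_j\}_{i,j=0}^N$ is a basis for ${\cal D}\otimes{\cal D}$, injectivity is equivalent to the linear independence of the $(N+1)^2$ operators $E_iE^*_0E_j$ $(0\le i,j\le N)$ in $\text{\rm End}(V)$.

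Next I would exploit that $E^*_0$ has rank one (as noted for primitive idempotents before Definition \ref{def:171}). Writing $E^*_0v=\phi(v)w$, where $w$ spans $E^*_0V$ and $\phi$ is the functional vanishing on $\sum_{i\neq0}E^*_iV$ with $\phi(w)=1$, one computes $E_iE^*_0E_j\colon v\mapsto \phi(E_jv)\,E_iw$. So $E_iE^*_0E_j$ is the rank-one operator attached to the vector $E_iw$ and the functional $\phi\circ E_j=E_j^t\phi$. A routine fact about rank-one operators reduces the required independence to two claims: that $\{E_iw\}_{i=0}^N$ is linearly independent in $V$, and that $\{E_j^t\phi\}_{j=0}^N$ is linearly independent in $V^*$.

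For the first claim I would show $w$ is cyclic for $A$, i.e.\ ${\cal D}w=V$. Fix a $\Phi$-basis $\{v_i\}_{i=0}^N$ as in Definition \ref{def:Phibasis}; since $\dim E^*_0V=1$ we may take $w=v_0$. With respect to $\{v_i\}$ the matrix $B$ representing $A$ is irreducible tridiagonal, by Definition \ref{def:171}(v). An elementary induction then shows $A^kw$ lies in $\text{Span}\{v_0,\dots,v_k\}$ with $v_k$-coefficient equal to the product $B_{10}B_{21}\cdots B_{k,k-1}$ of subdiagonal entries, which is nonzero. Hence $\{A^kw\}_{k=0}^N$ is a basis for $V$, so ${\cal D}w=V$; since ${\cal D}=\text{Span}\{E_i\}_{i=0}^N$, the $N+1$ vectors $E_iw$ span $V$ and are therefore linearly independent. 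The second claim is literally the transpose of the first: $(A^t,\{E_i^t\}_{i=0}^N,(A^*)^t,\{(E^*_i)^t\}_{i=0}^N)$ is again a Leonard system, now on $V^*$, because transposition preserves multiplicity-freeness and primitive idempotents, and since $(E_iXE_j)^t=E_j^tX^tE_i^t$ while $|i-j|$ is symmetric, conditions (iv),(v) of Definition \ref{def:171} are preserved. As $\phi$ spans the image of the rank-one idempotent $(E^*_0)^t$, applying the first claim to this transpose system shows $\phi$ is cyclic for $A^t$, giving the independence of $\{E_j^t\phi\}_{j=0}^N$.

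Combining the two independence claims via the rank-one bookkeeping yields the linear independence of $\{E_iE^*_0E_j\}_{i,j=0}^N$, hence injectivity, hence (by the dimension count) the asserted isomorphism. I expect the cyclic-vector step to be the main obstacle: the real content is translating the abstract conditions (iv),(v) into the concrete irreducible tridiagonal shape of $B$ and reading off that the Krylov vectors $\{A^kw\}_{k=0}^N$ are independent. Once cyclicity is established on both $V$ and $V^*$, the remaining rank-one manipulations and the dimension comparison are routine.
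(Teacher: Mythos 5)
Your proof is correct. It rests on the same two pillars as the paper's proof --- the dimension count $\dim({\cal D}\otimes{\cal D})=(N+1)^2=\dim\mathrm{End}(V)$, and the fact that with respect to a $\Phi$-basis the matrix representing $A$ is irreducible tridiagonal while $E^*_0$ is rank one --- but the packaging is genuinely different. The paper works with the basis $\{A^r\}_{r=0}^N$ of $\cal D$ and verifies linear independence of the $A^rE^*_0A^s$ by a single coordinate computation: $(A^rE^*_0A^s)_{ij}=(A^r)_{i0}(A^s)_{0j}$, which vanishes when $i>r$ or $j>s$ and is nonzero when $i=r$, $j=s$. Your Krylov computation showing $A^kw\in\mathrm{Span}\{v_0,\ldots,v_k\}$ with nonzero $v_k$-coefficient is literally the same triangularity fact about $(A^r)_{i0}$, so the combinatorial heart is identical; what differs is that you route it through the basis $\{E_i\}$ of $\cal D$, an abstract rank-one factorization $E_iE^*_0E_j\colon v\mapsto\phi(E_jv)E_iw$, and a duality argument (the transposed Leonard system on $V^*$) to handle the right-hand factor, whereas the paper treats the two factors symmetrically by the same direct entry computation. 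Your version buys a conceptual explanation --- the map is an isomorphism because the image vector of the rank-one idempotent $E^*_0$ is cyclic for $A$ and its co-image functional is cyclic for $A^t$ --- and it proves the paper's Corollary \ref{cor:EiEs0Ej} directly rather than deducing it afterwards; the cost is the extra (routine but nonzero) verification that the transposed sequence is again a Leonard system, which the paper's symmetric computation avoids entirely.
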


\begin{proof}
Recall that $\{A^i\}_{i=0}^N$ form a basis for $\cal D$, so it suffices
to show that the elements 
\begin{equation}    \label{eq:ArEs0As}
   A^r E^*_0 A^s, \qquad\qquad r,s=0,1,\ldots,N
\end{equation}
form a basis for $\text{End}(V)$.
Let $\{v_i\}_{i=0}^N$ denote a $\Phi$-basis for $V$.
Identify each element of $\text{End}(V)$ with the matrix in 
$\text{Mat}_{N+1}(\F)$ that represents it with respect to $\{v_i\}_{i=0}^N$.
From this point of view $A$ is an irreducible tridiagonal matrix and 
$E^*_0 = \text{diag}(1,0,\ldots,0)$.
For $r,s=0,1,\ldots,N$ we show that the entries of $A^rE^*_0A^s$ satisfy
\begin{align}    \label{eq:ArEs0Asaux1}
(A^rE^*_0A^s)_{ij} &=
 \begin{cases}
   0 & \text{ if $i>r$ or $j>s$},  \\
   \neq 0 & \text{ if $i=r$ and $j=s$},
 \end{cases}
  & & i,j=0,1,\ldots,N.
\end{align}
By matrix multiplication,
\begin{align}    \label{eq:ArEs0Asaux2}
 (A^rE^*_0A^s)_{ij} &= (A^r)_{i0}(A^s)_{0j},
   & & i,j=0,1,\ldots,N.
\end{align}
Using the irreducible tridiagonal shape of $A$ we find
that for $i=0,1,\ldots,N$ the entry $(A^r)_{i0}$ is zero if $i>r$,
and nonzero if $i=r$.
Similarly for $j=0,1,\ldots,N$ the entry $(A^s)_{0j}$ is zero
if $j>s$, and nonzero if $j=s$.
Combining these facts with \eqref{eq:ArEs0Asaux2} we routinely
obtain \eqref{eq:ArEs0Asaux1}.
Therefore the elements 
\eqref{eq:ArEs0As} are linearly independent.
The number of elements in \eqref{eq:ArEs0As} is $(N+1)^2$ and
this is equal to the dimension of $\text{End}(V)$.
By these comments the elements \eqref{eq:ArEs0As} form a basis
for $\text{End}(V)$.
The result follows.
\end{proof}

\begin{corollary}    \label{cor:EiEs0Ej}   \samepage
Let  $(A, \{E_i\}_{i=0}^N, A^*, \{E^*_i\}_{i=0}^N)$ denote
a Leonard system on $V$. Then the elements
\begin{equation}     \label{eq:EiEs0Ej}
   E_i E^*_0 E_j,   \qquad\qquad  i,j=0,1,\ldots,N
\end{equation}
form a basis for $\text{\rm End}(V)$.
\end{corollary}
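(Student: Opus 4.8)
The plan is to deduce this immediately from Proposition \ref{prop:174} by transporting a convenient basis of $\mathcal{D} \otimes \mathcal{D}$ across the isomorphism $x \otimes y \mapsto x E^*_0 y$. The key observation is that, in the paragraph preceding Definition \ref{def:171}, we recorded that $\{E_i\}_{i=0}^N$ is a basis for the subalgebra $\mathcal{D}$ generated by $A$. The proposition expresses everything in terms of $\mathcal{D}$, so the natural move is to feed this particular basis into it rather than the power basis $\{A^i\}_{i=0}^N$ used in the proof of the proposition.

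Concretely, the first step is to note that since $\{E_i\}_{i=0}^N$ is a basis for $\mathcal{D}$, the set of simple tensors
\[
 E_i \otimes E_j, \qquad\qquad i,j = 0,1,\ldots,N
\]
is a basis for $\mathcal{D} \otimes \mathcal{D}$; this is the standard fact that a basis of a tensor product is obtained from the pairwise tensors of bases of the factors. The second step is to invoke Proposition \ref{prop:174}: the $\F$-linear map $\mathcal{D} \otimes \mathcal{D} \to \text{End}(V)$, $x \otimes y \mapsto x E^*_0 y$ is an isomorphism of $\F$-vector spaces. An isomorphism of vector spaces carries any basis of the domain to a basis of the codomain, so the images
\[
 E_i E^*_0 E_j, \qquad\qquad i,j = 0,1,\ldots,N
\]
form a basis for $\text{End}(V)$, which is exactly the assertion.

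There is essentially no obstacle here, since the substantive content has already been established in Proposition \ref{prop:174}; the only thing to verify is the elementary tensor-basis statement, which is routine. I expect the write-up to be a few lines. One alternative, should a self-contained argument be preferred, would be to repeat the rank/triangularity reasoning of the proposition directly on the matrices $E_i E^*_0 E_j$ relative to a $\Phi$-basis, using that $E^*_0 = \text{diag}(1,0,\ldots,0)$ and that each $E_i$ is a polynomial in the irreducible tridiagonal matrix $A$; but this is strictly more work and offers no advantage, so I would not take that route. The clean deduction via the isomorphism is the intended proof.
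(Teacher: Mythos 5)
Your proposal is correct and is exactly the paper's argument: the paper's proof reads ``By Proposition \ref{prop:174} and since $\{E_i\}_{i=0}^N$ form a basis for $\cal D$,'' which is precisely your deduction via the tensor-product basis $E_i \otimes E_j$ and the isomorphism $x \otimes y \mapsto xE^*_0y$. Your write-up simply spells out the routine details that the paper leaves implicit.
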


\begin{proof}
By Proposition \ref{prop:174} and since $\{E_i\}_{i=0}^N$
form a basis for $\cal D$.
\end{proof}

\begin{lemma}    \label{lem:EsiArEsj}   \samepage
Let  $(A, \{E_i\}_{i=0}^N, A^*, \{E^*_i\}_{i=0}^N)$ denote
a Leonard system on $V$.
Then the following hold for $r=0,1,\ldots,N$.
\begin{itemize}
\item[\rm (i)]
$ \displaystyle
 E^*_iA^r E^*_j 
 = \begin{cases}
     0 & \text{ if $r < |i-j|$}, \\
     \neq 0 & \text{ if $r=|i-j|$},
   \end{cases}
    \qquad\qquad i,j = 0,1,\ldots,N$.
\item[\rm (ii)]
$ \displaystyle
 E_i{A^*}^r E_j 
 = \begin{cases}
     0 & \text{ if $r < |i-j|$}, \\
     \neq 0 & \text{ if $r=|i-j|$},
   \end{cases}
    \qquad\qquad i,j = 0,1,\ldots,N$.
\end{itemize}
\end{lemma}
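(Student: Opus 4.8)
The plan is to fix a $\Phi$-basis $\{v_i\}_{i=0}^N$ for $V$, where $\Phi=(A,\{E_i\}_{i=0}^N,A^*,\{E^*_i\}_{i=0}^N)$, and to identify every element of $\text{End}(V)$ with the matrix representing it with respect to this basis, exactly as in the proof of Proposition \ref{prop:174}. From that point of view $A$ is represented by an irreducible tridiagonal matrix, which I will call $B$, and since $v_i \in E^*_iV$ each $E^*_i$ is represented by the diagonal idempotent with $(i,i)$-entry $1$ and all other entries $0$. A direct computation then gives $(E^*_i A^r E^*_j)_{k\ell}=\delta_{ki}\delta_{\ell j}(B^r)_{ij}$, so $E^*_i A^r E^*_j$ vanishes precisely when $(B^r)_{ij}=0$ and is nonzero precisely when $(B^r)_{ij}\neq 0$. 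This reduces part (i) to the purely matrix-theoretic claim that for an irreducible tridiagonal $B$ one has $(B^r)_{ij}=0$ when $r<|i-j|$ and $(B^r)_{ij}\neq 0$ when $r=|i-j|$.

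To establish that claim I would expand $(B^r)_{ij}=\sum B_{i k_1}B_{k_1 k_2}\cdots B_{k_{r-1} j}$ and read each nonzero term as a walk $i=k_0,k_1,\ldots,k_r=j$ in which consecutive indices differ by at most $1$, since $B$ is tridiagonal. If $r<|i-j|$ then no such walk exists, because a walk of length $r$ with steps of size at most $1$ can change the index by at most $r$; hence the sum is empty and $(B^r)_{ij}=0$. If $r=|i-j|$, say $i\le j$ so that $r=j-i$, then the only walk from $i$ to $j$ of length $r$ is the strictly increasing walk $i,i+1,\ldots,j$, since attaining net change $+r$ in $r$ steps each of size at most $+1$ forces every step to equal $+1$. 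Thus the sum collapses to the single term $B_{i,i+1}B_{i+1,i+2}\cdots B_{j-1,j}$, a product of superdiagonal entries, which is nonzero because $B$ is irreducible; so $(B^r)_{ij}\neq 0$. The case $i>j$ is identical using subdiagonal entries, and the case $r=0$ is immediate.

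For part (ii) I would avoid repeating the argument and instead invoke the $*$-dual Leonard system $\Phi^*=(A^*,\{E^*_i\}_{i=0}^N,A,\{E_i\}_{i=0}^N)$, which is again a Leonard system. Applying part (i) to $\Phi^*$, so that the role of $A$ is played by $A^*$ and the role of $E^*_i$ by $E_i$, yields exactly $E_i (A^*)^r E_j=0$ when $r<|i-j|$ and $E_i (A^*)^r E_j\neq 0$ when $r=|i-j|$, which is part (ii).

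The only genuinely delicate point, and hence the step I expect to guard most carefully, is the nonvanishing when $r=|i-j|$: here I must verify that the surviving walk is truly unique, so that no cancellation among terms can occur, and that the relevant off-diagonal entries of $B$ are nonzero. Both facts come directly from the irreducible tridiagonal shape of $B$, so the main obstacle is careful bookkeeping of the walks rather than anything deep, while the vanishing for $r<|i-j|$ is just the elementary bound on how far a short walk can reach.
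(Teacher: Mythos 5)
Your proposal is correct and follows essentially the same route as the paper, which simply fixes a $\Phi$-basis, identifies operators with matrices so that $A$ becomes irreducible tridiagonal and each $E^*_i$ a diagonal matrix unit, and declares the rest routine; your walk-counting argument is exactly the routine computation being alluded to. Deducing (ii) by applying (i) to $\Phi^*$ rather than repeating the argument is a cosmetic difference fully in the spirit of the paper's other proofs.
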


\begin{proof}
(i):
Fix a $\Phi$-basis $\{v_i\}_{i=0}^N$ for $V$.
Identify each element of $\text{End}(V)$ with the matrix in
$\text{Mat}_{N+1}(\F)$ that
represents it with respect to $\{v_i\}_{i=0}^N$.
Using this point of view the result is routinely obtained.

(ii):
Similar to the proof of (i).
\end{proof}

\medskip

For the rest of this section fix a Leonard system
$\Phi=(A, \{E_i\}_{i=0}^N, A^*, \{E^*_i\}_{i=0}^N)$ on $V$, 
with eigenvalue sequence $\{\th_i\}_{i=0}^N$ and dual eigenvalue sequence 
$\{\th^*_i\}_{i=0}^N$. 
Define
\begin{align}        \label{eq:defaiasi}
  a_i &= \text{tr}(AE^*_i),  &  a^*_i &= \text{tr}(A^*E_i),
  & & i=0,1,\ldots,N.
\end{align}

\medskip

\begin{lemma}    \label{lem:EsiAEsi}   \samepage
The following hold for $i=0,1,\ldots,N$.
\begin{itemize}
\item[\rm (i)]
$E^*_iAE^*_i = a_iE^*_i$.
\item[\rm (ii)]
$E_iA^*E_i = a^*_i E_i$.
\end{itemize}
\end{lemma}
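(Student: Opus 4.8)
The key structural fact I would rely on is the ``bidiagonal'' vanishing contained in Lemma~\ref{lem:EsiArEsj}(i), applied with $r=1$: namely $E^*_iAE^*_j=0$ whenever $|i-j|>1$. Setting $i=j$ there is the degenerate case $r=0<|i-j|$ fails, so that lemma does not immediately give the value of the diagonal block $E^*_iAE^*_i$; it only tells us this block need not vanish. The real content of the present lemma is that this block, although nonzero in general, is a \emph{scalar} multiple of the rank-one idempotent $E^*_i$, and that the scalar is exactly $a_i=\tr(AE^*_i)$.

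The first step is to observe that for each $i$ the idempotent $E^*_i$ has rank one (noted in the excerpt: $\tr(E^*_i)=1$ and $\mathrm{rank}(E^*_i)=1$). Consequently the map $x\mapsto E^*_i x E^*_i$ sends $\mathrm{End}(V)$ into $E^*_i\,\mathrm{End}(V)\,E^*_i$, which is a one-dimensional space spanned by $E^*_i$ itself (since $E^*_i$ is a primitive idempotent, $E^*_iyE^*_i\in\F E^*_i$ for all $y$). Hence there is a unique scalar $\lambda_i\in\F$ with $E^*_iAE^*_i=\lambda_i E^*_i$, and it only remains to identify $\lambda_i$ with $a_i$. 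To do this I would take the trace of both sides: on the right $\tr(\lambda_i E^*_i)=\lambda_i\,\tr(E^*_i)=\lambda_i$, using $\tr(E^*_i)=1$. On the left $\tr(E^*_iAE^*_i)=\tr(A(E^*_i)^2)=\tr(AE^*_i)=a_i$, using cyclicity of trace and $(E^*_i)^2=E^*_i$. Comparing gives $\lambda_i=a_i$, which is exactly part~(i). Part~(ii) follows verbatim with the roles of $A,E^*_i$ replaced by $A^*,E_i$, invoking Lemma~\ref{lem:EsiArEsj}(ii) in place of (i).

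The only step requiring a moment's care is the claim that $E^*_iyE^*_i\in\F E^*_i$ for every $y\in\mathrm{End}(V)$; this is where the rank-one hypothesis is essential. Concretely, fix a nonzero $v\in E^*_iV$; then $E^*_i$ projects $V$ onto $\F v$, so for any $y$ the vector $E^*_iyE^*_iw$ is a scalar (depending on $w$) times $v$, and linearity in $w$ forces $E^*_iyE^*_i$ to act as a single scalar on all of $V$ through the one-dimensional image, i.e. to equal that scalar times $E^*_i$. I do not expect any genuine obstacle here; the main point of the proof is simply recognizing that the diagonal block lives in a one-dimensional algebra and then reading off the scalar by a trace computation, with all the supporting facts ($\mathrm{rank}\,E^*_i=1$, $\tr E^*_i=1$, the idempotent relations) already recorded in the excerpt.
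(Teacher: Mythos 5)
Your proof is correct. The paper's own proof is shorter in spirit but less explicit: it fixes a $\Phi$-basis $\{v_i\}_{i=0}^N$, identifies $\mathrm{End}(V)$ with $\Mat_{N+1}(\F)$, and declares the result ``routinely obtained'' --- the routine computation being that in this basis $E^*_i$ is the diagonal matrix with a single $1$ in position $(i,i)$, so $E^*_iAE^*_i$ equals $A_{ii}E^*_i$ while $a_i=\tr(AE^*_i)=A_{ii}$. You reach the same conclusion without choosing a basis: the rank-one property of the primitive idempotent forces $E^*_iyE^*_i\in\F E^*_i$ for every $y$, and cyclicity of the trace together with $\tr(E^*_i)=1$ and $(E^*_i)^2=E^*_i$ pins the scalar down as $a_i$. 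Your coordinate-free version is arguably cleaner and generalizes verbatim to any multiplicity-free operator without reference to the Leonard system structure (note that you never actually use Lemma~\ref{lem:EsiArEsj}, as you yourself observe); the paper's version buys brevity by leaning on the matrix identification it has already set up for the surrounding lemmas. Both are complete; there is no gap in your argument.
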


\begin{proof}
(i):
Fix a $\Phi$-basis $\{v_i\}_{i=0}^N$ for $V$.
Identify each element of $\text{End}(V)$ with the matrix in
$\text{Mat}_{N+1}(\F)$ that
represents it with respect to $\{v_i\}_{i=0}^N$.
Using this point of view the result is routinely obtained.

(ii):
Similar to the proof of (i).
\end{proof}

\begin{lemma}    \label{lem:AsAEs0}  \samepage
Assume  $N \geq 1$. Then the following hold.
\begin{align}
A^*AE^*_0 &= \th^*_1 AE^*_0 + a_0(\th^*_0-\th^*_1)E^*_0,
                                      \label{eq:AsAEs0}  \\
A^*AE^*_N &= \th^*_{N-1}AE^*_N + a_N(\th^*_N-\th^*_{N-1})E^*_N,
                                      \label{eq:AsAEsN}  \\
E_0A^*A &= \th_1 E_0A^* + a^*_0(\th_0-\th_1) E_0,
                                      \label{eq:E0AsA}  \\
E_N A^*A &= \th_{N-1}E_NA^* + a^*_N (\th_N-\th_{N-1})E_N.
                                      \label{eq:ENAsA}
\end{align}
\end{lemma}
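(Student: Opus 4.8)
The plan is to prove the four identities by expanding $A$ and $A^*$ in terms of their primitive idempotents and exploiting the tridiagonal structure of $\Phi$. I will describe the argument for \eqref{eq:AsAEs0} in detail; the remaining three are obtained by the same mechanism.

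First I would resolve $AE^*_0$ using $I = \sum_{i=0}^N E^*_i$, writing $AE^*_0 = \sum_{i=0}^N E^*_iAE^*_0$. By Lemma \ref{lem:EsiArEsj}(i) with $r=1$ the summand $E^*_iAE^*_0$ vanishes unless $i \in \{0,1\}$, and by Lemma \ref{lem:EsiAEsi}(i) the $i=0$ term equals $a_0E^*_0$; hence $AE^*_0 = a_0E^*_0 + E^*_1AE^*_0$. Next I would multiply on the left by $A^* = \sum_{j=0}^N \th^*_jE^*_j$, using $E^*_jE^*_i = \delta_{i,j}E^*_i$ to get $A^*E^*_0 = \th^*_0E^*_0$ and $A^*E^*_1 = \th^*_1E^*_1$. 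This produces $A^*AE^*_0 = a_0\th^*_0E^*_0 + \th^*_1E^*_1AE^*_0$.

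The decisive step is then to eliminate the off-diagonal product: from the expansion of $AE^*_0$ we have $E^*_1AE^*_0 = AE^*_0 - a_0E^*_0$, and substituting this back collapses the expression to $A^*AE^*_0 = \th^*_1AE^*_0 + a_0(\th^*_0-\th^*_1)E^*_0$, which is \eqref{eq:AsAEs0}. Identity \eqref{eq:AsAEsN} is obtained identically at the opposite end of the diagram, starting from $AE^*_N = a_NE^*_N + E^*_{N-1}AE^*_N$. For \eqref{eq:E0AsA} and \eqref{eq:ENAsA} I would run the mirror-image computation with multiplication on the right: expand $E_0A^* = a^*_0E_0 + E_0A^*E_1$ by resolving $I$ on the right and invoking Lemma \ref{lem:EsiArEsj}(ii) and Lemma \ref{lem:EsiAEsi}(ii), then multiply on the right by $A = \sum_i\th_iE_i$ and eliminate $E_0A^*E_1 = E_0A^* - a^*_0E_0$ exactly as above.

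Because every ingredient is already recorded in Lemmas \ref{lem:EsiArEsj} and \ref{lem:EsiAEsi}, there is no genuine obstacle; the only point that takes a moment's thought is the substitution trading $E^*_1AE^*_0$ for $AE^*_0 - a_0E^*_0$, which is what converts a plain idempotent expansion into the two-term relation being asserted. The hypothesis $N \geq 1$ is used precisely to ensure the index $1$ (resp. $N-1$) exists and differs from $0$ (resp. $N$), so that these expansions have the stated shape.
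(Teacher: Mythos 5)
Your proposal is correct and follows essentially the same route as the paper: both start from the expansion $AE^*_0 = a_0E^*_0 + E^*_1AE^*_0$ and then eliminate the cross term $E^*_1AE^*_0$ (the paper does this in one stroke by multiplying on the left by $A^*-\th^*_1I$, which is algebraically the same as your multiply-by-$A^*$-then-substitute step). The only cosmetic difference is that the paper obtains \eqref{eq:AsAEsN} by applying \eqref{eq:AsAEs0} to the relative $\Phi^\downarrow$ rather than redoing the computation at the other end.
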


\begin{proof}
We first show \eqref{eq:AsAEs0}.
Using $I=\sum_{i=0}^N E^*_i$ and Definition \ref{def:171}(v)
we find $AE^*_0 = I A E^*_0 = E^*_0 A E^*_0 + E^*_1 A E^*_0$.
By this and Lemma \ref{lem:EsiAEsi}(i),
\begin{equation}    \label{eq:aux1}
  AE^*_0 = a_0 E^*_0 + E^*_1 A E^*_0.
\end{equation}
In equation \eqref{eq:aux1}, multiply each side on the left by 
$A^*-\th^*_1I$,
and simplify the result using $A^* E^*_0 = \th^*_0 E^*_0$ and
$A^* E^*_1 = \th^*_1 E^*_1$.
This yields \eqref{eq:AsAEs0}.
Applying \eqref{eq:AsAEs0} to $\Phi^\downarrow$ we get \eqref{eq:AsAEsN}.
The proofs of \eqref{eq:E0AsA} and \eqref{eq:ENAsA} are similar.
\end{proof}

\begin{lemma}         \label{lem:E0AsAEs0}   \samepage
Assume $N \geq 1$. Then the following hold.
\begin{itemize}
\item[\rm (i)]
$E_0A^*AE^*_0
 = \big( (a_0-\th_0)(\th^*_0-\th^*_1) + \th_0\th^*_0 \big) E_0E^*_0$.
\item[\rm (ii)]
$E_0A^*A^2E^*_0
 = (\th_0+\th_1)E_0A^*AE^*_0 - \th_0\th_1\th^*_0 E_0E^*_0$.
\end{itemize}
\end{lemma}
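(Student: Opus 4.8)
The plan is to obtain both identities by left-multiplying the relations of Lemma~\ref{lem:AsAEs0} by $E_0$ and then collapsing the results with the eigenvalue relations $E_0A=\th_0E_0$ and $A^*E^*_0=\th^*_0E^*_0$; these hold because $E_0$ (resp.\ $E^*_0$) is the primitive idempotent of $A$ (resp.\ $A^*$) for the eigenvalue $\th_0$ (resp.\ $\th^*_0$), as recorded in the discussion preceding Definition~\ref{def:171}.

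For part~(i), I would start from \eqref{eq:AsAEs0}, namely $A^*AE^*_0=\th^*_1AE^*_0+a_0(\th^*_0-\th^*_1)E^*_0$, and multiply on the left by $E_0$. Using $E_0A=\th_0E_0$ the first term becomes $\th^*_1\th_0E_0E^*_0$, so that $E_0A^*AE^*_0=\big(\th_0\th^*_1+a_0(\th^*_0-\th^*_1)\big)E_0E^*_0$. It then remains only to verify the scalar identity $\th_0\th^*_1+a_0(\th^*_0-\th^*_1)=(a_0-\th_0)(\th^*_0-\th^*_1)+\th_0\th^*_0$, which is immediate upon expanding the right-hand side. This part is entirely routine.

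For part~(ii), the cleanest route is to observe that the claim is equivalent to the single operator identity
\[
 E_0A^*(A-\th_0I)(A-\th_1I)E^*_0 = 0.
\]
Indeed, expanding $(A-\th_0I)(A-\th_1I)=A^2-(\th_0+\th_1)A+\th_0\th_1I$ and using $A^*E^*_0=\th^*_0E^*_0$ to rewrite $E_0A^*E^*_0=\th^*_0E_0E^*_0$, one sees that this vanishing is precisely the asserted relation after rearrangement. So I would reduce (ii) to proving the displayed identity.

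To prove the vanishing, the key observation is that $(A-\th_0I)$ and $(A-\th_1I)$ are polynomials in $A$ and hence commute, so I may reorder the factors as $(A-\th_1I)(A-\th_0I)$. Then \eqref{eq:E0AsA}, read in the form $E_0A^*(A-\th_1I)=a^*_0(\th_0-\th_1)E_0$, collapses the left portion to a scalar multiple of $E_0$, after which the surviving factor is annihilated by $E_0(A-\th_0I)=0$; thus the whole expression is $0$. The step I expect to be the crux is exactly this recognition: that the target should be read as the factored operator $E_0A^*(A-\th_0I)(A-\th_1I)E^*_0$ and that the two linear factors must be commuted so that \eqref{eq:E0AsA} applies on the left while the $E_0$ eigenvalue relation applies on the right. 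A naive alternative that instead writes $E_0A^*A^2E^*_0=E_0A^*A\cdot AE^*_0$ and applies \eqref{eq:E0AsA} directly does go through, but it leaves a stray $a^*_0$ term and then forces one to invoke the dual form of part~(i) to massage the answer back into the stated shape, so the factored formulation is the more economical path.
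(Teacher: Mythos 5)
Your proof is correct and follows essentially the same route as the paper: part (i) is the identical computation, and for part (ii) your factored identity $E_0A^*(A-\th_1I)(A-\th_0I)E^*_0=0$ is just a repackaging of the paper's step of multiplying \eqref{eq:E0AsA} on the right by $A-\th_0I$ (the $a^*_0$ term dying via $E_0(A-\th_0I)=0$) and then on the right by $E^*_0$ using $A^*E^*_0=\th^*_0E^*_0$. No changes are needed.
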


\begin{proof}
(i):
In equation \eqref{eq:AsAEs0}, multiply each side on the left by $E_0$ and use
$E_0A=\th_0E_0$ to obtain the result.

(ii):
In equation \eqref{eq:E0AsA}, multiply each side on the right by 
$A-\th_0I$.
Simplify the result using $E_0A=\th_0E_0$ to find
\begin{equation}    \label{eq:aux11}
 E_0A^*A^2 = (\th_0+\th_1)E_0A^*A - \th_0\th_1 E_0A^*.
\end{equation}
In equation \eqref{eq:aux11}, multiply each side on the right by $E^*_0$,
and simplify the result using $A^*E^*_0 = \th^*_0 E^*_0$.
The result follows.
\end{proof}

\begin{lemma}      \label{lem:EsNAAsEN}   \samepage
Assume $N \geq 1$. Then the following hold.
\begin{itemize}
\item[\rm (i)]
$E^*_NAA^*E_N
 = \big( (a^*_N-\th^*_N)(\th_N-\th_{N-1}) + \th_N \th^*_N \big) E^*_NE_N$.
\item[\rm (ii)]
$E^*_NA^2A^*E_N =
  (\th_{N-1}+\th_N)E^*_NAA^*E_N - \th_{N-1}\th_N \th^*_N E^*_NE_N$.
\end{itemize}
\end{lemma}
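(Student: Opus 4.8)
The plan is to mirror the proof of Lemma \ref{lem:E0AsAEs0}, but run on the dual side. I begin with a word of caution that will shape the whole argument: although part (i) here is the image of Lemma \ref{lem:E0AsAEs0}(i) under the duality $\Phi\mapsto(\Phi^*)^{\downarrow\Downarrow}$, part (ii) is \emph{not} the image of Lemma \ref{lem:E0AsAEs0}(ii). That duality carries $E_0A^*A^2E^*_0$ to $E^*_NA(A^*)^2E_N$, whereas the operator here is $E^*_NA^2A^*E_N$, and the words $A^*A^2$ and $A^2A^*$ are genuinely different. So I will carry out the mirror computation directly rather than quote Lemma \ref{lem:E0AsAEs0}.

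The engine for both parts is the dual-side analogue of \eqref{eq:AsAEsN}. Applying \eqref{eq:AsAEsN} to the Leonard system $\Phi^*=(A^*,\{E^*_i\}_{i=0}^N,A,\{E_i\}_{i=0}^N)$ — equivalently, writing $A^*E_N=E_{N-1}A^*E_N+a^*_NE_N$ by Definition \ref{def:171}(iv) and Lemma \ref{lem:EsiAEsi}(ii), then multiplying on the left by $A-\th_{N-1}I$ and using $(A-\th_{N-1}I)E_{N-1}=0$ — I obtain
\[
 AA^*E_N = \th_{N-1}A^*E_N + a^*_N(\th_N-\th_{N-1})E_N.
\]
For part (i), I would multiply this on the left by $E^*_N$ and use $E^*_NA^*=\th^*_NE^*_N$, so that $E^*_NA^*E_N=\th^*_NE^*_NE_N$. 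This gives $E^*_NAA^*E_N=\big(\th_{N-1}\th^*_N+a^*_N(\th_N-\th_{N-1})\big)E^*_NE_N$, and a one-line rearrangement shows the bracket equals $(a^*_N-\th^*_N)(\th_N-\th_{N-1})+\th_N\th^*_N$, as claimed.

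For part (ii), I would first promote the displayed relation to an operator identity by multiplying it on the left by $A-\th_NI$ and using $(A-\th_NI)E_N=0$ to kill the trailing $E_N$ term, giving
\[
 A^2A^*E_N = (\th_{N-1}+\th_N)AA^*E_N - \th_{N-1}\th_N A^*E_N.
\]
Conceptually this just records that $A^*E_N$ lies in the span of the $A$-eigenspaces for $\th_{N-1}$ and $\th_N$, hence is annihilated by $(A-\th_{N-1}I)(A-\th_NI)$. Multiplying on the left by $E^*_N$ and once more using $E^*_NA^*E_N=\th^*_NE^*_NE_N$ turns the last term into $\th_{N-1}\th_N\th^*_NE^*_NE_N$ and yields the stated formula.

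The main obstacle is exactly the bookkeeping flagged at the outset: one must resist obtaining (ii) by dualizing Lemma \ref{lem:E0AsAEs0}(ii), and instead build the extra factor of $A$ on the correct (left) side, anchoring the elimination on $E_N$ through $(A-\th_NI)E_N=0$ rather than on $E^*_0$ as in the original. Everything else is routine linear algebra, and the hypothesis $N\geq1$ is precisely what guarantees that $\th_{N-1}$ and $E_{N-1}$ exist so that these manipulations are legitimate.
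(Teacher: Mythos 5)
Your proof is correct and takes essentially the same route as the paper: part (i) is exactly what one obtains by applying Lemma \ref{lem:E0AsAEs0}(i) to $\Phi^{*\downarrow\Downarrow}$ (the paper's one-line proof), and for part (ii) the paper says only that the argument is similar to the proof of Lemma \ref{lem:E0AsAEs0}(ii), which is precisely the mirror computation you carry out. Your remark that (ii) is \emph{not} the dual image of Lemma \ref{lem:E0AsAEs0}(ii), since $A^2A^*$ and $A{A^*}^2$ are different words, correctly explains why the paper does not dispose of (ii) by duality alone.
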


\begin{proof}
(i):
Apply Lemma \ref{lem:E0AsAEs0}(i) to $\Phi^{*\downarrow\Downarrow}$.

(ii):
Similar to the proof of Lemma \ref{lem:E0AsAEs0}(ii).
\end{proof}

\begin{lemma}   \label{lem:a0asN}   \samepage
Assume $N \geq 1$. Then
\begin{equation}    \label{eq:a0asN}
a_0(\th^*_0 - \th^*_1) + a^*_N(\th_{N-1}-\th_N)
  = \th^*_0 \th_{N-1} - \th^*_1 \th_N.
\end{equation}
\end{lemma}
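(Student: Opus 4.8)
The plan is to compute the single product $E_N A^* A E^*_0$ in two different ways using the boundary relations of Lemma \ref{lem:AsAEs0}, and then to cancel a common factor. Both computations rest on the fact that $E_N$ commutes with $A$ with $E_N A = \th_N E_N$, and that $A^* E^*_0 = \th^*_0 E^*_0$.

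First I would multiply \eqref{eq:AsAEs0} on the left by $E_N$, giving
\[
 E_N A^* A E^*_0 = \th^*_1 E_N A E^*_0 + a_0(\th^*_0-\th^*_1) E_N E^*_0.
\]
Since $E_N A = \th_N E_N$ we have $E_N A E^*_0 = \th_N E_N E^*_0$, so the right-hand side collapses to $\big(\th^*_1 \th_N + a_0(\th^*_0-\th^*_1)\big) E_N E^*_0$. Next I would multiply \eqref{eq:ENAsA} on the right by $E^*_0$, giving
\[
 E_N A^* A E^*_0 = \th_{N-1} E_N A^* E^*_0 + a^*_N(\th_N-\th_{N-1}) E_N E^*_0,
\]
and here $A^* E^*_0 = \th^*_0 E^*_0$ yields $E_N A^* E^*_0 = \th^*_0 E_N E^*_0$, so this side becomes $\big(\th_{N-1}\th^*_0 + a^*_N(\th_N-\th_{N-1})\big) E_N E^*_0$.

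Equating the two expressions for $E_N A^* A E^*_0$ leaves
\[
 \big(\th^*_1 \th_N + a_0(\th^*_0-\th^*_1)\big) E_N E^*_0
 = \big(\th_{N-1}\th^*_0 + a^*_N(\th_N-\th_{N-1})\big) E_N E^*_0,
\]
and the last step is to cancel the factor $E_N E^*_0$. The one subtle point of the argument — the part I expect to require the most care to justify cleanly — is that $E_N E^*_0 \neq 0$. This follows from Corollary \ref{cor:EiEs0Ej}: the element $E_N E^*_0 E_N$ belongs to a basis of $\text{End}(V)$ and is therefore nonzero, which forces $E_N E^*_0 \neq 0$. Since a scalar multiple of a nonzero operator vanishes only when the scalar does, cancelling $E_N E^*_0$ equates the two scalar coefficients; moving $a^*_N(\th_N-\th_{N-1})$ and $\th^*_1\th_N$ across then produces exactly \eqref{eq:a0asN}.
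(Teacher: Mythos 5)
Your proposal is correct and follows essentially the same route as the paper: the paper also forms $E_N$ times \eqref{eq:AsAEs0} minus \eqref{eq:ENAsA} times $E^*_0$, simplifies using $E_NA=\th_NE_N$ and $A^*E^*_0=\th^*_0E^*_0$, and concludes from $E_NE^*_0\neq 0$ (Corollary \ref{cor:EiEs0Ej}). Your justification of the nonvanishing of $E_NE^*_0$ via $E_NE^*_0E_N$ being a basis element is exactly the intended use of that corollary.
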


\begin{proof}
Let $\alpha$ denote the left-hand side of \eqref{eq:a0asN}
minus the right-hand side of \eqref{eq:a0asN}.
We show $\alpha=0$.
Consider the expression which is $E_N$ times  \eqref{eq:AsAEs0}
minus \eqref{eq:ENAsA} times $E^*_0$.
Simplifying this expression using
$E_NA=\th_NE_N$ and $A^*E^*_0 = \th^*_0E^*_0$ we get $\alpha E_NE^*_0=0$.
Note that $E_NE^*_0 \neq 0$ by Corollary \ref{cor:EiEs0Ej}
so $\alpha=0$.
The result follows.
\end{proof}

\section{Leonard pairs of Krawtchouk type}
\label{sec:LPKrawt}

\indent
Our discussion of $\sltwo(\F)$ in Sections \ref{sec:sl2}--\ref{sec:matrices}
was under the assumption that
$\F$ is algebraically closed with $\text{Char}(\F) \not= 2$. 
Once again we make this assumption.
For the rest of the paper fix a feasible integer $N$.

\medskip

\begin{definition}    \label{def:177}     \samepage
Let $A,A^*$ denote a Leonard pair of diameter $N$.
This Leonard pair is said to have {\em Krawtchouk type} whenever
$\{N-2i\}_{i=0}^N$ is both an eigenvalue sequence and 
dual eigenvalue sequence of $A,A^*$.
\end{definition}

\begin{lemma}   \label{lem:178}     \samepage
Let $V$ denote a vector space over $\F$ with dimension
$N+1$ and let $A,A^*$ denote a Leonard pair on $V$.
Let $\{\th_i\}_{i=0}^N$ (resp. $\{\th^*_i\}_{i=0}^N$) denote
an eigenvalue sequence (resp. dual eigenvalue sequence) of $A,A^*$.
Then the following are equivalent:
\begin{itemize}
\item[\rm (i)]
Each of $\{\th_i\}_{i=0}^N$ and $\{\th^*_i\}_{i=0}^N$ is an
arithmetic progression.
\item[\rm (ii)]
There exist scalars $\alpha,\alpha^*,\beta,\beta^*$ in $\F$
with $\alpha\alpha^* \neq 0$ such that the Leonard pair
$\alpha A + \beta I$, $\alpha^* A^* + \beta^* I$
has Krawtchouk type.
\end{itemize}
\end{lemma}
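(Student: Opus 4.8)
The plan is to treat both implications as elementary exercises in affinely reparametrizing the eigenvalues, leaning on two facts recalled earlier. First, the affine-transformation remarks following Definition \ref{def:171} show that for scalars $\alpha,\alpha^*,\beta,\beta^*$ with $\alpha\alpha^* \neq 0$ the pair $\alpha A+\beta I$, $\alpha^* A^*+\beta^* I$ is again a Leonard pair whose associated Leonard systems have the same primitive idempotents as those of $A,A^*$; consequently the eigenvalue (resp. dual eigenvalue) attached to the idempotent $E_i$ (resp. $E^*_i$) is $\alpha\th_i+\beta$ (resp. $\alpha^*\th^*_i+\beta^*$), since $E_i$ projects onto the $(\alpha\th_i+\beta)$-eigenspace of $\alpha A+\beta I$. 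Second, by the Note following Definition \ref{def:173} the eigenvalue sequences of a Leonard pair are exactly a fixed sequence and its reversal, and likewise for the dual eigenvalue sequences. I will handle the eigenvalue data and the dual eigenvalue data separately, since the listing of the four associated Leonard systems shows the two reversals may be applied independently.

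For (i)$\Rightarrow$(ii) I write $\th_i=\th_0+id$ and $\th^*_i=\th^*_0+id^*$. Because $A,A^*$ are multiplicity-free (Lemma \ref{lem:170}), the $\th_i$ are mutually distinct, so $d\neq 0$ when $N\geq 1$ (the case $N=0$ being trivial), and similarly $d^*\neq 0$. I then set $\alpha=-2/d$, $\beta=N-\alpha\th_0$, $\alpha^*=-2/d^*$, $\beta^*=N-\alpha^*\th^*_0$, so that $\alpha\alpha^*\neq 0$, and a direct check gives $\alpha\th_i+\beta=N-2i$ and $\alpha^*\th^*_i+\beta^*=N-2i$ for all $i$. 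By the first fact above, $\{N-2i\}_{i=0}^N$ is then simultaneously an eigenvalue sequence and a dual eigenvalue sequence of $\alpha A+\beta I$, $\alpha^* A^*+\beta^* I$, so this pair has Krawtchouk type.

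For (ii)$\Rightarrow$(i), suppose $B=\alpha A+\beta I$ and $B^*=\alpha^* A^*+\beta^* I$ form a Krawtchouk-type pair. Since $\{\th_i\}_{i=0}^N$ is an eigenvalue sequence of $A,A^*$, the first fact gives that $\{\alpha\th_i+\beta\}_{i=0}^N$ is an eigenvalue sequence of $B,B^*$; by the second fact the only eigenvalue sequences of $B,B^*$ are this one and its reversal $\{\alpha\th_{N-i}+\beta\}_{i=0}^N$. Krawtchouk type forces $\{N-2i\}_{i=0}^N$ to be an eigenvalue sequence of $B,B^*$, hence to coincide termwise with one of these two. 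In the first case $\th_i=(N-\beta-2i)/\alpha$ and in the second $\th_i=(-N-\beta+2i)/\alpha$; either way $\{\th_i\}_{i=0}^N$ is an arithmetic progression. The identical argument applied to $\{\th^*_i\}_{i=0}^N$ and the dual eigenvalue sequences of $B,B^*$ shows $\{\th^*_i\}_{i=0}^N$ is an arithmetic progression, yielding (i).

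The only delicate point is the bookkeeping of the reversal ambiguity: I must use that the eigenvalue and dual eigenvalue sequences are each determined only up to reversal, and that these reversals are independent, so that the Krawtchouk-type hypothesis constrains the $\th_i$ and the $\th^*_i$ separately. Once this is in hand the argument reduces to the elementary observation that an affine change $x\mapsto\alpha x+\beta$ carries arithmetic progressions to arithmetic progressions, and carries some reparametrization of $\{\th_i\}_{i=0}^N$ onto $\{N-2i\}_{i=0}^N$ precisely when $\{\th_i\}_{i=0}^N$ is arithmetic; no further structural input about Leonard pairs is required.
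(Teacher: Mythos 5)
Your argument is correct and is exactly the routine argument the paper has in mind (its proof of this lemma is simply the word ``Routine''): translate between $\{\th_i\}_{i=0}^N$ and $\{N-2i\}_{i=0}^N$ by an affine map, using that scalar affine changes preserve the Leonard pair and its primitive idempotents, and that eigenvalue and dual eigenvalue sequences are each determined up to independent reversal. Your attention to the nonzero common differences (via multiplicity-freeness) and to the reversal ambiguity covers the only points that need care, so nothing is missing.
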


\begin{proof}
Routine.
\end{proof}

\medskip

In the following two theorems we characterize the Leonard pairs
of Krawtchouk type using $L=\sltwo(\F)$.

\medskip

\begin{theorem}   \label{thm:179}     \samepage
Consider the $L$-module $V=\text{\rm Hom}_N({\cal A})$ from 
Section {\rm \ref{sec:sl2Krawt}}.
Let $a,a^*$ denote normalized semisimple elements that generate $L$.
Then $a,a^*$ act on $V$ as a Leonard pair of Krawtchouk type.
\end{theorem}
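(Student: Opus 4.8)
The plan is to reduce the general pair to the standard pair of \eqref{eq:aas} by conjugating the module action, and then to quote the matrix descriptions already obtained in Sections~\ref{sec:sl2Krawt} and~\ref{sec:matrices}. Write $\rho:L\to\mathfrak{gl}(V)$ for the representation afforded by $V=\text{Hom}_N({\cal A})$, so that the claim is that $\rho(a),\rho(a^*)$ form a Leonard pair of Krawtchouk type on $V$. Since $a,a^*$ generate $L$, Lemma~\ref{lem:125} gives $p\neq 0$, $p\neq 1$, where $p$ is the corresponding parameter. Let $b,b^*$ be the pair of Example~\ref{exam:123b} having this same $p$. By Lemma~\ref{lem:aasbbs} there is an automorphism $\sigma$ of $L$ with $a^\sigma=b$ and $a^{*\sigma}=b^*$, and by Lemma~\ref{lem:autoinner}(ii) we may write $\sigma(y)=MyM^{-1}$ for some invertible $M\in\text{Mat}_2(\F)$.

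Next I would promote $M$ to an intertwiner on $V$. Because ${\cal A}=\F[y,z]$ is the polynomial algebra on $\text{Hom}_1({\cal A})$, the linear map of $\text{Hom}_1({\cal A})$ acting as $M$ on the basis $y,z$ extends uniquely to a graded algebra automorphism $\Theta$ of ${\cal A}$, which restricts to an invertible $T:V\to V$. Since the $L$-action on ${\cal A}$ is by derivations, the key identity is $T\rho(y)T^{-1}=\rho(MyM^{-1})$ for all $y\in L$: both $\Theta\,\partial_y\,\Theta^{-1}$ and $\partial_{MyM^{-1}}$ are derivations of ${\cal A}$, and on $\text{Hom}_1({\cal A})$ the element $y\in L$ acts as left multiplication by the matrix $y$ (see \eqref{eq:ehfaction}) while $\Theta$ acts as $M$, so the two derivations agree on $\text{Hom}_1({\cal A})$ and hence everywhere by \eqref{eq:star}. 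Consequently $\rho(a)=T^{-1}\rho(b)T$ and $\rho(a^*)=T^{-1}\rho(b^*)T$, so the pair $\rho(a),\rho(a^*)$ is simultaneously conjugate to $\rho(b),\rho(b^*)$.

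It then suffices to treat the standard pair $b,b^*$. Set $A=(NI-b)/2$ and $A^*=(NI-b^*)/2$ as in \eqref{eq:defAAs}. By Lemma~\ref{lem:147} (and the table following it), relative to $\{y^{N-i}z^i\}_{i=0}^N$ the matrix representing $A$ is irreducible tridiagonal and that representing $A^*$ is diagonal, while relative to $\{{y^*}^{N-i}{z^*}^i\}_{i=0}^N$ the matrix representing $A$ is diagonal and that representing $A^*$ is irreducible tridiagonal. Thus $A,A^*$ is a Leonard pair on $V$ in the sense of Definition~\ref{def:169}, and hence so is $b=NI-2A$, $b^*=NI-2A^*$ by the affine invariance noted after Definition~\ref{def:169}. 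Moreover $A$ is represented by the diagonal matrix $D$ relative to $\{{y^*}^{N-i}{z^*}^i\}_{i=0}^N$ and $A^*$ by $D$ relative to $\{y^{N-i}z^i\}_{i=0}^N$, so each has eigenvalue sequence $0,1,\ldots,N$; hence the eigenvalue sequence of $b$ and the dual eigenvalue sequence of $b^*$ are both $\{N-2i\}_{i=0}^N$, which is Krawtchouk type in the sense of Definition~\ref{def:177}.

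Finally I would note that being a Leonard pair and having Krawtchouk type are preserved under simultaneous conjugation: conjugating a Leonard system by $T$ produces a Leonard system with the same eigenvalue and dual eigenvalue sequences. Transporting the structure for $\rho(b),\rho(b^*)$ through $T$ therefore shows that $\rho(a),\rho(a^*)$ is a Leonard pair of Krawtchouk type, as desired. I expect the one genuinely delicate point to be the functoriality identity $T\rho(y)T^{-1}=\rho(\sigma(y))$, in particular fixing the orientation of $M$ so that conjugation by $T$ matches $\sigma$; once this intertwiner is in place the remainder is the bookkeeping already carried out in the earlier sections.
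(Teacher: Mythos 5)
Your proposal is correct and follows essentially the same route as the paper: reduce to the standard pair \eqref{eq:exam123b} and then read off the irreducible-tridiagonal/diagonal matrix representations from Lemma \ref{lem:147}. The only difference is that where the paper simply invokes the ``without loss of generality'' reduction set up above \eqref{eq:aas}, you justify it explicitly by lifting the inner automorphism $y\mapsto MyM^{-1}$ of $L$ to a graded algebra automorphism of ${\cal A}$ and checking the intertwining identity via \eqref{eq:star} --- a worthwhile piece of bookkeeping that the paper leaves implicit.
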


\begin{proof}
By the comment above \eqref{eq:aas} we may assume that
the basis $e,h,f$ for $L$ is related to $a,a^*$ according to
\eqref{eq:aas}.
Consider the elements $A,A^* \in \text{End}(V)$ from \eqref{eq:defAAs}.
Recall the basis $\{y^{N-i}z^i\}_{i=0}^N$ for $V$ 
from above Lemma \ref{lem:143pre} and the basis 
$\{{y^*}^{N-i}{z^*}^i\}_{i=0}^N$ for $V$ from above Lemma \ref{lem:145pre}.
By Lemma \ref{lem:147} the action of $A,A^*$ on these bases is 
described as follows.
With respect to the basis $\{y^{N-i}z^i\}_{i=0}^N$
the matrix representing $A$ is irreducible tridiagonal 
and the matrix representing $A^*$ is $\text{diag}(0,1,\ldots,N)$.
With respect to the basis $\{{y^*}^{N-i}{z^*}^i\}_{i=0}^N$
the matrix representing $A$ is $\text{diag}(0,1,\ldots,N)$ 
and the matrix representing $A^*$ is irreducible tridiagonal.
Now by \eqref{eq:defAAs2}, with respect to the basis $\{y^{N-i}z^i\}_{i=0}^N$
the matrix representing $a$ is irreducible tridiagonal and the matrix 
representing $a^*$ is $\text{diag}(N,N-2,\ldots,-N)$.
Moreover, with respect to the basis $\{{y^*}^{N-i}{z^*}^i\}_{i=0}^N$
the matrix representing $a$ is $\text{diag}(N,N-2,\ldots,-N)$ and 
the matrix representing $a^*$ is irreducible tridiagonal.
Therefore $a,a^*$ act on $V$ as a Leonard pair of Krawtchouk type.
\end{proof}

\begin{theorem}   \label{thm:180}     \samepage
Let $V$ denote a vector space over $\F$ with dimension
$N+1$ and let $A,A^*$ denote a Leonard pair on $V$ 
that has Krawtchouk type.
Then there exists an $L$-module structure on $V$
and a pair of normalized semisimple elements of $L$ 
that generate $L$ and act on $V$ as $A,A^*$.
The $L$-module $V$ is isomorphic to the $L$-module
$\text{\rm Hom}_N({\cal A})$ from Section {\rm \ref{sec:sl2Krawt}}.
\end{theorem}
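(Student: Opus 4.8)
The plan is to manufacture the $L$-module structure from the presentation of $L$ given in Lemma~\ref{lem:127}, so the goal is to produce a scalar $p$ with $p\neq0$ and $p\neq1$ for which the linear transformations $A,A^*$ of $V$ satisfy $[A,[A,A^*]]=4(2p-1)A+4A^*$ and $[A^*,[A^*,A]]=4(2p-1)A^*+4A$, which are exactly the relations \eqref{eq:thm127a}, \eqref{eq:thm127b} with $u=A$ and $v=A^*$. Granting this, the relations supply a Lie algebra homomorphism from the algebra ${\cal L}$ of Lemma~\ref{lem:127} into $\mathfrak{gl}(V)$ sending $u\mapsto A$ and $v\mapsto A^*$; composing it with the inverse of the isomorphism ${\cal L}\to L$ furnished by that lemma produces an $L$-module structure on $V$ in which the normalized semisimple generators $a,a^*$ of $L$ (with corresponding parameter $p$) act as $A$ and $A^*$. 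Since $p\neq0,1$, Lemma~\ref{lem:125} guarantees that $a,a^*$ generate $L$, which is what is asserted.

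To locate $p$ and verify the two relations, I would choose a Leonard system $\Phi=(A,\{E_i\}_{i=0}^N,A^*,\{E^*_i\}_{i=0}^N)$ associated with $A,A^*$ whose eigenvalue and dual eigenvalue sequences are \emph{both} $\{N-2i\}_{i=0}^N$; this is possible by Definition~\ref{def:177}. Fixing a $\Phi$-basis $\{v_i\}_{i=0}^N$, one has $A^*v_i=(N-2i)v_i$ while the matrix of $A$ is irreducible tridiagonal. In this basis $(\ad A^*\,M)_{ij}=(\th^*_i-\th^*_j)M_{ij}=2(j-i)M_{ij}$ for every $M$, so applying $\ad A^*$ twice to the tridiagonal $A$ annihilates its main diagonal and quadruples each off-diagonal entry. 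Writing $a_i=\text{tr}(AE^*_i)$ as in \eqref{eq:defaiasi} and recognizing the diagonal part of $A$ as $\sum_i E^*_iAE^*_i=\sum_i a_iE^*_i$ via Lemma~\ref{lem:EsiAEsi}(i), this gives $[A^*,[A^*,A]]=4A-4\sum_i a_iE^*_i$. Hence the second relation holds precisely when $\sum_i a_iE^*_i=(1-2p)A^*$, that is, when $a_i=(1-2p)(N-2i)$ for $0\le i\le N$. Working instead in an eigenbasis of $A$ and using Lemma~\ref{lem:EsiAEsi}(ii), the first relation reduces in the same way to $a^*_i:=\text{tr}(A^*E_i)=(1-2p)(N-2i)$.

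Everything therefore comes down to showing that the two diagonal sequences $\{a_i\}$ and $\{a^*_i\}$ are affine-linear in $i$ with one common slope $1-2p$, and this is the step I expect to be the main obstacle. It is the Krawtchouk specialization of the Askey--Wilson relations satisfied by any Leonard pair \cite[Theorem~1.5]{TV}: the fact that both $\{\th_i\}$ and $\{\th^*_i\}$ are arithmetic progressions of common difference $-2$ is exactly what degenerates those relations to the $\sltwo$ relations above. A self-contained derivation runs through the split structure of $\Phi$: a short computation with Lemma~\ref{lem:a0asN}, together with its images under the operations $*$, $\downarrow$, $\Downarrow$ on Leonard systems, pins down the endpoint values (forcing $a^*_0=a_0$, hence a single slope through $a_0=(1-2p)N$ when $N\ge1$), while a three-term recurrence for $\{a_i\}$ extracted from Lemmas~\ref{lem:AsAEs0}--\ref{lem:a0asN} fills in the interior. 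The same computation expresses the off-diagonal entries of $A$ as nonzero scalar multiples of $p$ and of $1-p$; since $A$ is irreducible tridiagonal these entries are nonzero, whence $p\neq0$ and $p\neq1$.

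Finally I identify the module. Any $L$-submodule of $V$ is invariant under $a=A$ and $a^*=A^*$, hence is a common invariant subspace of the Leonard pair; since $A^*$ is multiplicity-free (Lemma~\ref{lem:170}) such a subspace is spanned by some of the $v_i$, and the irreducible tridiagonal shape of $A$ forces it to be $0$ or $V$. Thus $V$ is an irreducible $L$-module of dimension $N+1$. By \eqref{eq:aas} we have $a^*=h$, so $h$ acts on $V$ as $A^*$ and $\{v_i\}_{i=0}^N$ is an eigenbasis with $h.v_i=(N-2i)v_i$; because $e$ and $f$ raise and lower the $h$-weight by $2$ and the weights $N+2$, $-N-2$ do not occur, we get $e.v_0=0$ and $f.v_N=0$. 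Lemma~\ref{lem:unique} then provides an isomorphism of $L$-modules $V\to\text{Hom}_N({\cal A})$, completing the argument.
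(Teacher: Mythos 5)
Your overall architecture matches the paper's: establish the two relations $[A,[A,A^*]]=4(2p-1)A+4A^*$ and $[A^*,[A^*,A]]=4(2p-1)A^*+4A$ for some $p\neq 0,1$, invoke Lemma \ref{lem:127} to get the $L$-module structure, and then identify the module via Lemma \ref{lem:unique}. Your observation that in a $\Phi$-basis one has $([A^*,M])_{ij}=(\th^*_i-\th^*_j)M_{ij}$, so that $(\ad A^*)^2$ kills the diagonal of the tridiagonal $A$ and quadruples its off-diagonal, giving $[A^*,[A^*,A]]=4A-4\sum_i a_iE^*_i$, is correct and is a genuinely cleaner route to what the paper proves as Claims 1 and 2 combined. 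It correctly reduces everything to: $a_i=c(N-2i)$ and $a^*_i=c(N-2i)$ for a single constant $c$ (then $p$ is defined by $1-2p=c$). The problem is that this reduction is exactly where the real work lies, and your proposed derivation of it does not go through. Lemmas \ref{lem:AsAEs0}--\ref{lem:a0asN} (and their images under $*$, $\downarrow$, $\Downarrow$) involve only the idempotents $E_0,E_N,E^*_0,E^*_N$; they constrain the four boundary values $a_0,a_N,a^*_0,a^*_N$ and say nothing about $a_i$ for $0<i<N$. There is no ``three-term recurrence for $\{a_i\}$'' to be extracted from them, and the ``split structure'' you allude to is not developed anywhere in this paper. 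The paper's actual mechanism for the interior is different: from $[A,[A,[A,A^*]]]=4[A,A^*]$ it deduces that $[A,[A,A^*]]-4A^*$ lies in the commutant of the multiplicity-free $A$, hence equals $\sum_i\alpha_iA^i$, and then kills $\alpha_i$ for $i\geq 3$ by sandwiching with $E^*_kA^kE^*_0\neq 0$ (Lemma \ref{lem:EsiArEsj}), kills $\alpha_2$ using the vanishing second difference $\th^*_0-2\th^*_1+\th^*_2=0$, and only then uses the endpoint Lemmas \ref{lem:E0AsAEs0}--\ref{lem:a0asN} to kill $\alpha_0$ and to match the two slopes. Some argument of this kind (or an outright appeal to the Askey--Wilson relations of \cite[Theorem 1.5]{TV}, which you mention only as motivation) is indispensable; as written your proof of the central linearity claim is a gap.

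Two smaller points. First, your argument for $p\neq 0,1$ asserts that ``the same computation expresses the off-diagonal entries of $A$ as nonzero scalar multiples of $p$ and of $1-p$''; only the products $A_{i,i+1}A_{i+1,i}$ are basis-independent, and relating even one of these to $p(1-p)$ requires the sandwich computation $E^*_0(\cdots)E^*_0$ that the paper carries out in its Claim 3 --- this still needs to be done. Second, your weight argument for $e.v_0=0$ and $f.v_N=0$ relies on $N+2\notin\{N-2i\}_{i=0}^N$ in $\F$; this fails when $\text{Char}(\F)=N+1$ (which feasibility permits), since then $N+2=-N=\th^*_N$, and one only gets $e.v_0\in\F v_N$. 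The paper's explicit computation of $e.v_0$ from $e=\bigl(2a+2(2p-1)a^*-[a,a^*]\bigr)/(8(1-p))$ and the tridiagonal action of $a$ on $v_0$ avoids this edge case and should be used instead.
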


\begin{proof}
We assume $N \geq 2$; otherwise the result is routine.
Let $\Phi=(A,\{E_i\}_{i=0}^N,A^*,\{E^*_i\}_{i=0}^N)$ denote 
a Leonard system on $V$ associated with $A,A^*$.
Let $\{\th_i\}_{i=0}^N$
(resp. $\{\th^*_i\}_{i=0}^N$) denote the eigenvalue sequence
(resp. dual eigenvalue sequence) of $\Phi$.
By construction we may assume
\begin{align}   \label{eq:thithsi}
 \th_i &= N-2i,  &  \th^*_i &= N-2i, & & i=0,1,\ldots,N.
\end{align}

\medskip

{\em Claim $1$}.
{\em
$A,A^*$ satisfy both
\begin{align}
 [A,[A,[A,A^*]]] &= 4[A,A^*],     \label{eq:dg1}  \\
 [A^*,[A^*,[A^*,A]]] &= 4[A^*,A].  \label{eq:dg2}
\end{align}
}

{\em Proof.}
We first show \eqref{eq:dg1}.
Let $C$ denote the left-hand side of \eqref{eq:dg1} 
minus the right-hand side of \eqref{eq:dg1}.
Observe that
\begin{equation}    \label{eq:defC}
 C = A^3A^* - 3 A^2 A^*A + 3 A A^*A^2 - A^*A^3 - 4(AA^*-A^*A).
\end{equation}
We show $C=0$.
Since $I=\sum_{i=0}^N E_i$, it suffices to show
$E_iCE_j=0$ for $i,j=0,1,\ldots,N$.
Let $i,j$ be given.
Expand $E_iCE_j$ using \eqref{eq:defC}, and simplify
using $E_iA=\th_i E_i$ and $AE_j=\th_jE_j$ to find
\[
 E_iCE_j =  E_iA^*E_j(\th_i-\th_j+2)(\th_i-\th_j-2)(\th_i-\th_j).
\]
Observe that $E_iA^*E_j=0$ if $|i-j|>1$,
$\th_i-\th_j+2=0$ if $i-j=1$,
$\th_i-\th_j-2=0$ if $j-i=1$, and
$\th_i-\th_j=0$ if $i=j$.
In all cases $E_iCE_j=0$.
We have shown $E_iCE_j=0$ for $i,j=0,1,\ldots,N$.
Therefore $C=0$ so \eqref{eq:dg1} holds.
The proof of \eqref{eq:dg2} is similar.
We have shown Claim 1.

\medskip

{\em Claim $2$}.
{\em
There exists $p \in \F$ such that both
\begin{align}
 [A,[A,A^*]] &= 4(2p-1)A + 4A^*,   \label{eq:AW1} \\
 [A^*,[A^*,A]] &= 4(2p-1)A^* + 4A. \label{eq:AW2}
\end{align}
}

{\em Proof.}
Let $\cal D$ denote the subalgebra of $\text{End}(V)$
generated by $A$.
Since $A$ is multiplicity-free,
\[ 
  {\cal D} = \{ y \in \text{End}(V) \,|\, [y,A]=0\}.
\]
The element $[A,[A,A^*]]-4A^*$ commutes with $A$ by \eqref{eq:dg1},
so this element is contained in $\cal D$.
Therefore there exist scalars $\{\alpha_i\}_{i=0}^N$ in $\F$ such that
\begin{equation}     \label{eq:AAAsaux1}
 [A,[A,A^*]] - 4A^* = \sum_{i=0}^N \alpha_i A^i.
\end{equation}

We show $\alpha_i=0$ for $3 \leq i \leq N$.
Suppose not, and let
$k = \max\{i \,|\, 3 \leq i \leq N, \; \alpha_i \neq 0\}$.
In equation \eqref{eq:AAAsaux1}, multiply each side on the left
by $E^*_k$ and on the right by $E^*_0$.
Expand the result to find
\begin{equation}    \label{eq:claim2aux}
 E^*_k (A^2A^*-2AA^*A+A^*A^2-4A^*) E^*_0 
    = \sum_{i=0}^k \alpha_i E^*_k A^i E^*_0.
\end{equation}
Using Lemma \ref{lem:EsiArEsj}(i) we find that
the left-hand side of \eqref{eq:claim2aux} is $0$ and 
the right-hand side of \eqref{eq:claim2aux} equals
$\alpha_kE^*_kA^kE^*_0$.
Therefore $\alpha_k E^*_kA^kE^*_0 = 0$.
Recall $\alpha_k \neq 0$ by construction and
$E^*_kA^kE^*_0 \neq 0$ by Lemma \ref{lem:EsiArEsj}(i).
Therefore $\alpha_k E^*_kA^kE^*_0 \neq 0$,
for a contradiction.
We have shown $\alpha_i=0$ for $3 \leq i \leq N$.

Next we show $\alpha_2=0$.
So far we have
\[
  A^2A^*-2AA^*A+A^*A^2-4A^*
 = \alpha_0 I + \alpha_1 A + \alpha_2 A^2.
\]
In this equation, multiply each side on the left by $E^*_2$ and
on the right by $E^*_0$ to find
\begin{equation}    \label{eq:alpha2aux1}
E^*_2(A^2A^*-2AA^*A+A^*A^2-4A^*)E^*_0
 = E^*_2(\alpha_0 I + \alpha_1 A + \alpha_2 A^2)E^*_0.
\end{equation}
In \eqref{eq:alpha2aux1} we evaluate the terms in the left-hand side.
To aid in this evaluation we make some comments.
Using $A^*=\sum_{i=0}^N \th^*_iE^*_i$ and Definition \ref{def:171}(v)
we find
\begin{equation}    \label{eq:alpha2aux2}
   E^*_2AA^*AE^*_0 = \th^*_1 E^*_2AE^*_1AE^*_0.
\end{equation}
Using $I=\sum_{i=0}^N E^*_i$ and Definition \ref{def:171}(v)
we find
\begin{equation}    \label{eq:alpha2aux3}
   E^*_2A^2E^*_0 = E^*_2 A I A E^*_0
                 = E^*_2 A E^*_1 A E^*_0.
\end{equation}
Combining \eqref{eq:alpha2aux2} and \eqref{eq:alpha2aux3} we find
$E^*_2 AA^*A E^*_0 = \th^*_1 E^*_2 A^2 E^*_0$.
By these comments the left-hand side of \eqref{eq:alpha2aux1} is equal to
$(\th^*_0-2\th^*_1+\th^*_2)E^*_2A^2E^*_0$.
This is $0$ since $\th^*_0-2\th^*_1+\th^*_2=0$ by \eqref{eq:thithsi}.
The right-hand side of \eqref{eq:alpha2aux1} is equal to
$\alpha_2 E^*_2A^2E^*_0$ by Lemma \ref{lem:EsiArEsj}(i).
Therefore $\alpha_2 E^*_2A^2E^*_0 = 0$.
We have $E^*_2A^2 E^*_0 \neq 0$ by Lemma \ref{lem:EsiArEsj}(i)
so $\alpha_2=0$.

Next we show $\alpha_0 = 0$.
So far we have
\begin{equation}     \label{eq:AAAs}
  A^2A^*-2AA^*A+A^*A^2 - 4 A^* = \alpha_0 I + \alpha_1 A.
\end{equation}
In this equation we multiply each side on the left by $E_0$
and on the right by $E^*_0$.
Simplify the result using Lemma \ref{lem:E0AsAEs0}(ii) and
then Lemma \ref{lem:E0AsAEs0}(i).
Simplify the result of that using $E_0E^*_0 \neq 0$ to find
\begin{equation}     \label{eq:0}
 (a_0-\th_0)(\th_1 - \th_0)(\th^*_0-\th^*_1)
    - 4 \th^*_0 = \alpha_0 + \alpha_1 \th_0.
\end{equation}
In equation \eqref{eq:AAAs}, multiply each side on the left by $E^*_N$
and on the right by $E_N$.
Simplify the result using Lemma \ref{lem:EsNAAsEN}(ii) and
then Lemma \ref{lem:EsNAAsEN}(i).
Simplify the result of that using $E^*_NE_N \neq 0$ to find
\begin{equation}    \label{eq:N}
  (\th^*_N-a^*_N)(\th_{N-1}-\th_{N})^2 
   - 4 \th^*_N = \alpha_0 + \alpha_1 \th_N.
\end{equation}
View \eqref{eq:0}, \eqref{eq:N} as a linear system of
equations in the unknowns $\alpha_0$, $\alpha_1$.
The coefficient matrix is nonsingular since $\th_0 \neq \th_N$.
Solving this system for $\alpha_0$ and simplifying the result
using \eqref{eq:a0asN}, \eqref{eq:thithsi} we find $\alpha_0=0$.

So far we have $\alpha_i=0$ for $2 \leq i \leq N$ and
$\alpha_0=0$.
Therefore \eqref{eq:AAAsaux1} becomes
\begin{equation}
 [A,[A,A^*]]-4A^* = \alpha_1 A.    \label{eq:AW1b}
\end{equation}
Interchanging the roles of $A$ and $A^*$ in our argument so far,
we see that there exists $\alpha^*_1 \in \F$ such that
\begin{equation}
 [A^*,[A^*,A]]-4A = \alpha^*_1 A^*.  \label{eq:AW2b}
\end{equation}
We show $\alpha_1 = \alpha_1^*$.
In \eqref{eq:AW1b}, take the commutator of each term with $A^*$
to find
\begin{equation}    \label{eq:AWaux1}
 [[A,[A,A^*]],A^*] = \alpha_1 [A,A^*].
\end{equation}
Similarly using \eqref{eq:AW2b},
\begin{equation}   \label{eq:AWaux2}
 [A,[A^*,[A^*,A]]] = \alpha^*_1 [A,A^*].
\end{equation}
In \eqref{eq:AWaux1} and \eqref{eq:AWaux2}, the left-hand sides are equal by 
the Jacobi identity, so
$(\alpha_1-\alpha_1^*)[A,A^*]=0$.
Observe that $[A,A^*]\not=0$; otherwise
each of $\{E_i\}_{i=0}^N$ commutes with $A^*$ in view of \eqref{eq:Ei}, 
contradicting Definition \ref{def:171}(iv).
Therefore $\alpha_1 = \alpha^*_1$.
Now define $p \in \F$ such that $4(2p-1)=\alpha_1$.
Then \eqref{eq:AW1b}, \eqref{eq:AW2b} become
\eqref{eq:AW1}, \eqref{eq:AW2}.
We have shown Claim 2.

\medskip

{\em Claim $3$}.
{\em
We have $p \neq 0$ and $p \neq 1$.
}

\smallskip

{\em Proof}.
By Claim 2,
\begin{align}
 0 &= A^2A^* - 2AA^*A + A^*A^2 -4(2p-1)A - 4A^*, \label{eq:clm1} \\
 0 &= {A^*}^2A-2A^*AA^*+A{A^*}^2 - 4(2p-1)A^* - 4A. \label{eq:clm2}
\end{align}
In these equations, multiply each side on the left by $E^*_0$
and on the right by $E^*_0$.
Simplify the result using $E^*_0A^*=\th^*_0E^*_0$ 
and $A^*E^*_0=\th^*_0E^*_0$ to find
\begin{align}
 0 &= \th^*_0 E^*_0A^2E^*_0 - E^*_0AA^*AE^*_0 
      - 2(2p-1)E^*_0AE^*_0 -2\th^*_0E^*_0,        \label{eq:clm4}  \\
 0 &= E^*_0 A E^*_0 + (2p-1)\th^*_0 E^*_0.   \label{eq:clm3} 
\end{align}
Eliminating $E^*_0AE^*_0$ from \eqref{eq:clm4}
using \eqref{eq:clm3} we find
\begin{equation}              \label{eq:clm5}
 0 = \th^*_0 E^*_0A^2E^*_0 - E^*_0AA^*AE^*_0
        + 8 p(p-1)\th^*_0 E^*_0.
\end{equation}
Pick a $\Phi$-basis $\{v_i\}_{i=0}^N$ for $V$, and identify each
element of $\text{End}(V)$ with the matrix in $\text{Mat}_{N+1}(\F)$
that represents it with respect to $\{v_i\}_{i=0}^N$.
In \eqref{eq:clm5} we compute the $(0,0)$-entry of each side, and find
\[
   0 = (\th^*_0-\th^*_1)A_{01}A_{10} + 8 p(p-1)\th^*_0.
\]
Each of $A_{01}$, $A_{10}$ is nonzero since $A$ is irreducible
tridiagonal, so $p(p-1) \neq 0$.
We have shown Claim 3.

\medskip

Comparing Claims $2$ and $3$ with Lemma \ref{lem:127} we get
all the assertions of the theorem except the last one.
To get the last assertion we invoke Lemma \ref{lem:unique}.
Let $a,a^*$ denote a pair of normalized semisimple 
elements of $L$ that generate $L$ and act on $V$ as $A,A^*$.
By the comment above \eqref{eq:aas} we may assume
that the basis $e,h,f$ for $L$ is related to $a,a^*$ according to
\eqref{eq:aas}.
Pick a $\Phi$-basis $\{v_i\}_{i=0}^N$ for $V$.
We show that this basis satisfies condition (ii) of
Lemma \ref{lem:unique}.
By construction $h.v_i = (N-2i)v_i$ for $i=0,1,\ldots,N$.
We now show $e.v_0=0$.
We have $a^*.v_0 = N v_0$ and $a^*.v_1=(N-2)v_1$.
We also have $a.v_0 \in \text{Span}\{v_0,v_1\}$, so
there exist scalars $\xi,\eta$ in $\F$
such that $a.v_0 = \xi v_0 + \eta v_1$.
Using these comments we apply
\eqref{eq:lem124b} to $v_0$ and find $\xi=(1-2p)N$.
By Lemma \ref{lem:132}(i),
\[
  e = \frac{2a + 2(2p-1)a^* - [a,a^*]}{8(1-p)}.
\]
In this equation we apply each term to $v_0$.
Simplifying the result using the above comments we routinely find
$e.v_0=0$.
Next we show $f.v_N=0$.
We have $a^*.v_N = -Nv_N$ and $a^*.v_{N-1}=(2-N)v_{N-1}$.
We also have $a.v_N \in \text{Span}\{v_{N},v_{N-1}\}$,
so there exist scalars $\mu,\nu$ in $\F$ such that
$a.v_N = \mu v_N + \nu v_{N-1}$.
Using these comments we apply
\eqref{eq:lem124b} to $v_N$ and find $\mu=(2p-1)N$.
By Lemma \ref{lem:132}(i),
\[
 f = \frac{2a+2(2p-1)a^*+[a,a^*]}{8p}.
\]
In this equation we apply each term to $v_N$.
Simplifying the result using the above comments we routinely find
$f.v_N=0$.
We have shown that $V$ satisfies condition (ii) of Lemma \ref{lem:unique}.
By Lemma \ref{lem:unique} the $L$-module $V$ is isomorphic to the 
$L$-module $\text{Hom}_N({\cal A})$.
\end{proof}

\medskip

We have been discussing the Leonard pairs of Krawtchouk type.
For a study of general Leonard pairs we recommend the survey 
paper \cite{T:survey}.

\section{Acknowledgements}

\indent
The authors thank Erik Koelink and Tom Koornwinder for sending
us detailed historical information concerning
the Krawtchouk polynomials and $\sltwo$.

\bigskip

{

\small

}

\bigskip\bigskip\noindent
Kazumasa Nomura\\
Professor Emeritus\\
Tokyo Medical and Dental University\\
Kohnodai, Ichikawa, 272-0827 Japan\\
email: knomura@pop11.odn.ne.jp

\bigskip\noindent
Paul Terwilliger\\
Department of Mathematics\\
University of Wisconsin\\
480 Lincoln Drive\\ 
Madison, Wisconsin, 53706 USA\\
email: terwilli@math.wisc.edu

\bigskip\noindent
{\bf Keywords.}
Lie algebra, Krawtchouk polynomial, Leonard pair

\noindent
{\bf 2010 Mathematics Subject Classification}.
33C05

\end{document}